\newcommand{\st} [1]     {\scriptscriptstyle{{#1}}}
\newcommand{\rmap}       {\longrightarrow}
\newcommand{\Ker}        {{\mathrm {ker}}}
\newcommand{\Cour}[1]      {[\![#1]\!]}
\newcommand{\Lie}        {\mathcal L}
\theoremstyle{plain}
\newtheorem{theorem}{Theorem}[section]
\newtheorem{proposition}{Proposition}[section]
\newtheorem{lemma}{Lemma}[section]
\newtheorem{corollary}{Corollary}[section]
\theoremstyle{definition}
\newtheorem{example}{Example}[section]
\newtheorem{definition}{Definition}[section]
\newtheorem{remark}{Remark}[section]
\newcommand\R{\mathbb{R}}
\newcommand{\C}{\mathbb C}
\DeclareMathAlphabet{\mathpzc}{OT1}{pcz}{m}{it}
\begin{document}
%%%%%%%%%%%%%%%%%%%%%%%%%%%%%%%%%%%%%%%%%%%%%%%%%%%%%%%%%%%%%%%%%%%%%%%%%%%
%%%%%%%%%%%%%%%%%%%%%%    Title    %%%%%%%%%%%%%%%%%%%%%%%%%%%%%%%%%%%%%%%%
\title{Multiplicative Dirac Structures}

\author{Cristi\'an Ortiz}
\address{Departamento de Matem\'atica\\
Universidade Federal do Paran\'a\\
Setor de Ci\^encias Exatas - Centro Polit\'ecnico
81531-990 Curitiba - Brasil.}
\email{cristian.ortiz@ufpr.br}
\thanks{}

\date{}

\subjclass[2010]{Primary 53D17, 53D18}

\begin{abstract}

In this paper we introduce multiplicative Dirac structures on Lie groupoids, providing a unified framework to study both multiplicative Poisson bivectors (i.e., Poisson group(oid)s) and multiplicative closed $2$-forms (e.g., symplectic groupoids). We prove that for every source simply connected Lie groupoid $G$ with Lie algebroid $AG$, there exists a one-to-one correspondence between multiplicative Dirac structures on $G$ and Dirac structures on $AG$, which are compatible with both the linear and algebroid structures of $AG$. We explain in what sense this extends the integration of Lie bialgebroids to Poisson groupoids carried out in \cite{MX2} and the integration of Dirac manifolds of \cite{BCWZ}. We also explain the connection between multiplicative Dirac structures and higher geometric structures such as $\mathcal{LA}$-groupoids and $\mathcal{CA}$-groupoids.

\end{abstract}

\maketitle

\tableofcontents

\section{Introduction}

Dirac structures were introduced by Courant and Weinstein \cite{CW} as a common generalization of Poisson bivectors, closed $2$-forms and regular foliations. A \textbf{Dirac structure} on a smooth manifold $M$ consists on a vector subbundle $L\subseteq \mathbb{T}M:=TM\oplus T^*M$, which is maximal isotropic with respect to the nondegenerate symmetric pairing on $\mathbb{T}M,$

$$
\langle (X,\alpha),(Y,\beta)\rangle= \alpha(Y)+ \beta(X),
$$
and that satisfies the integrability condition

$$\Cour{\Gamma(L),\Gamma(L)}\subseteq \Gamma(L),$$

\noindent with respect to the Courant bracket $\Cour{\cdot,\cdot}:\Gamma(\mathbb{T}M)\times\Gamma(\mathbb{T}M)\rmap \Gamma(\mathbb{T}M),$

$$\Cour{(X,\alpha),(Y,\beta)}=([X,Y], \Lie_{X}\beta- i_{Y}d\alpha).$$

\noindent The integrability in the sense of Courant unifies different integrability conditions, including closed $2$-forms, Poisson bivectors and regular foliations (see \cite{C, CW}). More precisely, a $2$-form $\omega$ on a smooth manifold $M$ induces a bundle map $\omega^{\sharp}:TM\rmap T^*M, X\mapsto \omega(X,\cdot)$ whose graph $L_{\omega}=\{(X,\omega	^{\sharp}(X)\mid X\in TM)\}$ is a Lagrangian subbundle of $\mathbb{T}M$. In this case, the Courant integrability of $L_{\omega}$ is equivalent to $\omega$ being a closed $2$-form. Similarly, any bivector $\pi$ on $M$ defines a bundle map $\pi^{\sharp}:T^*M\rmap TM, \alpha\mapsto \pi(\alpha,\cdot)$ whose graph $L_{\pi}=\{(\pi^{\sharp}(\alpha),\alpha)\}$ is a Lagrangian subbundle of $\mathbb{T}M$. One checks that $L_{\pi}$ satisfies the Courant integrability condition if and only if $\pi$ is a Poisson bivector. Also, if $F\subseteq TM$ is a regular subbundle we denote by $F^{\circ}\subseteq T^*M$ its annihilator. Then the Lagrangian subbundle $F\oplus F^{\circ}\subseteq \mathbb{T}M$ is integrable in the sense of Courant if and only if $F\subseteq TM$ is involutive with respect to the Lie bracket of vector fields.

The main objective of this paper is to study Dirac structures defined on Lie groupoids, satisfying a suitable compatibility condition with the groupoid multiplication.  Our study is motivated by a variety of geometrical structures compatible with group or groupoid structures, including:

\begin{itemize}
\item[i)] \textit{Poisson-Lie groups}: these structures consist of a Lie group $G$ with a Poisson structure $\pi$, which are compatible in the sense that the multiplication map $m:G\times G\rmap G$ is a Poisson map. Equivalently, the Poisson bivector $\pi$ is \textbf{multiplicative}, that is

$$\pi_{gh}=(l_g)_*\pi_h+ (r_h)_*\pi_g,$$

\noindent for every $g,h\in G$. Here $l_g$ and $r_h$ denote the left and right multiplication by $g$ and $h$, respectively. Poisson-Lie groups arise as semiclassical limit of quantum groups, and they are infinitesimally described by \textit{Lie bialgebras}. See e.g. \cite{D}.

\item[ii)] \textit{Symplectic groupoids}: a symplectic groupoid is a Lie groupoid $G$ with a symplectic structure
$\omega$, which is compatible with the groupoid multiplication in the sense that the graph

$$\mathrm{Graph}(m)\subseteq G\times G\times \overline{G}$$

\noindent is a Lagrangian submanifold with respect to the symplectic structure $\omega\oplus\omega\ominus\omega$. This compatibility condition is equivalent to saying that $\omega$ is \textbf{multiplicative}, that is

$$m^*\omega=pr^*_1\omega+pr^*_2\omega,$$

\noindent where $pr_1,pr_2:G_{(2)}\rmap G$ are the canonical projections and $G_{(2)}\subseteq G\times G$ is the set of composable groupoid pairs. Symplectic groupoids arise in the context of quantization of Poisson manifolds \cite{Wsympgpds, WXU}, connecting Poisson geometry to noncommutative geometry. In \cite{CF}, symplectic groupoids appeared as phase spaces of certain sigma models. The infinitesimal description of symplectic groupoids is given by \emph{Poisson structures}, see e.g. \cite{Wsympgpds, CDW}.

\item[iii)] \textit{Poisson groupoids}: these objects were introduced by A. Weinstein \cite{Wco} unifying Poisson-Lie groups and symplectic groupoids. A Poisson groupoid is a Lie groupoid $G$ equipped with a Poisson structure $\pi$, which is compatible with the groupoid multiplication in the sense that

$$\mathrm{Graph}(m)\subseteq G\times G\times \overline{G}$$

\noindent is a coisotropic submanifold. These structures are related to the geometry of the classical dynamical Yang-Baxter equation, see for instance \cite{EV}. At the infinitesimal level, Poisson groupoids are described by \textit{Lie bialgebroids} \cite{MX1}.

\item[iv)] \textit{Presymplectic groupoids}: Lie groupoids equipped with a multiplicative closed $2$-form were studied in \cite{BCWZ}. A presymplectic groupoid \cite{BCWZ} is a Lie groupoid $G$ with a multiplicative closed $2$-form $\omega$ satisfying suitable nondegeneracy conditions. These objects arise in connection with equivariant cohomology and generalized moment maps \cite{BC2}. The infinitesimal description of presymplectic groupoids is given by \emph{Dirac structures}, extending the infinitesimal description of symplectic groupoids. More generally, Lie groupoids endowed with arbitrary multiplicative closed $2$-forms are infinitesimally described by bundle maps $\sigma:AG\rmap T^*M$ called \emph{IM-$2$-forms}. Here $AG$ denotes the Lie algebroid of $G$ and $T^*M$ is the cotangent bundle of the base of $G$.

\end{itemize}

The first goal of this work is to find a suitable definition of multiplicative Dirac structure that include
both multiplicative Poisson bivectors and multiplicative closed 2-forms, and hence encompasses all examples above. This is obtained by observing that given a Lie groupoid $G$ over $M$ with Lie algebroid $AG$, the tangent bundle $TG$ and the cotangent bundle $T^*G$ inherit natural Lie groupoid structures over $TM$ and $A^*G$, respectively. One observes that a bivector $\pi_G$ is multiplicative if and only if the bundle map $\pi^{\sharp}_G:T^*G\rmap TG$ is a groupoid morphism \cite{MX1}. Similarly, a $2$-form $\omega_G$ is multiplicative if and only if the bundle map $\omega^{\sharp}_G:TG\rmap T^*G$ is a morphism of Lie groupoids. It turns out that the direct sum vector bundle $TG\oplus T^*G$ is a Lie groupoid over $TM\oplus A^*G$, and graphs of both multiplicative Poisson bivectors and multiplicative closed $2$-forms define Lie subgroupoids of $TG\oplus T^*G$. We say that a Dirac structure $L_G$ on a Lie groupoid $G$ is \textbf{multiplicative} if $L_G\subseteq TG\oplus T^*G$ is a Lie subgroupoid. A Lie groupoid $G$ equipped with a multiplicative Dirac structure is referred to as a \textbf{Dirac groupoid}.

Our main purpose is to describe multiplicative Dirac structures infinitesimally, that is, in terms of Lie algebroid data. This work can be considered as a first step toward such a description. The main result of the present work says that for every source simply connected Lie groupoid $G$ with Lie algebroid $AG$, multiplicative Dirac structures on $G$ correspond to Dirac structures on $AG$ suitably compatible with both the linear and Lie algebroid structures on $AG$. In the particular case of multiplicative Poisson bivectors and multiplicative $2$-forms, we explain how this is equivalent to the known infinitesimal descriptions carried out in \cite{MX2} and \cite{BCWZ}, respectively. Along the way, we develop techniques that can treat all multiplicative structures above in a unified manner, often simplifying existing results and proofs.

The present paper is organized as follows. In section 2 we recall the definition of tangent and cotangent structures including: tangent and cotangent groupoids and their algebroids, i.e. tangent and cotangent algebroids. We also give an intrinsic construction of the tangent lift of a Dirac structure, providing an alternative proof of the results shown in \cite{Ctangent}. In section 3 we define the main objects of our study, multiplicative Dirac structures. We discuss a variety of examples arising in nature, including: foliated groupoids, Dirac Lie groups, tangent lifts of multiplicative Dirac structures, symmetries of multiplicative Dirac structures (e.g. reduction of Poisson groupoids), $B$-field transformations of multiplicative Dirac structures and generalized complex groupoids. In section 4 we introduce the notion of Dirac algebroid and also several examples are discussed, including: foliated algebroids, Dirac Lie algebras, tangent lifts of Dirac algebroids, symmetries of Dirac algebroids (e.g. reduction of Lie bialgebroids), $B$-field transformations of Dirac algebroids and generalized complex algebroids. In section 5 we explain how the multiplicativity of a Dirac structure is reflected at the Lie algebroid level, proving the main result of this work, which says that if $G$ is a source simply connected Lie groupoid with Lie algebroid $AG$, then there is a one-to-one correspondence between Dirac groupoid structures on $G$ and Dirac algebroid structures on $AG$. Along the way, we explain the relation between multiplicative Dirac structures and higher structures such as $\mathcal{CA}$-groupoids and $\mathcal{LA}$-groupoids. We also relate the examples of section 3 with the examples of section 4, in the spirit of the correspondence established by the main result of the paper. In section 6, we discuss conclusions and work in progress.

\subsection{Acknowledgements} The content of this work is based on the author's doctoral thesis \cite{OrtizThesis} carried out at IMPA under the supervision of Henrique Bursztyn. The author would like to thank Henrique Bursztyn for suggesting the problem of studying multiplicative Dirac structures as well as for several conversations and suggestions that have improved this work. While the author was at IMPA, he was supported by a PEC-PG doctoral scholarship from CAPES (Brasil).

%%%%%%%%%%%%%%%%%%%%%%%%%%%%%%%%%%%%%%%%%%%%%%%%%%%%%%%%%%%%%%%%%%%%%%%%%%%%%%%%%%%%%%%%%%%%%%%%%%%%%%%%%%%%%%%%%%%%%%%%%%%

\subsection{Notation and conventions}

For a Lie groupoid $G$ over $M$ we denote by $s,t:G\rmap M$ the source and target maps, respectively. The multiplication map is denoted by $m:G_{(2)}\rmap G$, where $G_{(2)}=\{(g,h)\in G\times G\mid s(g)=t(h)\}$ is the set of composable pairs. The Lie algebroid of $G$ is defined by $AG:=\mathrm{Ker}(Ts)|_{M}$, with Lie bracket given by identifying sections of $AG$ with right-invariant vector fields on $G$ and anchor map $\rho_{AG}:=Tt|_{AG}:AG\rmap TM$. Given a Lie groupoid morphism $\Psi:G_1\rmap G_2$, the corresponding Lie algebroid morphism is denoted by $A(\Psi):AG_1\rmap AG_2$. Arbitrary Lie algebroids are denoted by $A\rmap M$ with Lie bracket $[\cdot,\cdot]_A$ and anchor map $\rho_A$. Also, given a smooth manifold $M$, the tangent bundle is denoted by $p_M:TM\rmap M$ and the cotangent bundle is denoted by $c_M:T^*M\rmap M$.

%%%%%%%%%%%%%%%%%%%%%%%%%%%%%%%%%%%%%%%%%%%%%%%%%%%%%%%%%%%%%%%%%%%%%%%%%%%%%%%%%%%%%%%%%%%%%%%%%%%%%%%%%%%%%%%%%%%%%%%

\section{Tangent and cotangent structures}

\subsection{Tangent and cotangent groupoids}\label{tangentgroupoids}

Let $G$ be a Lie groupoid over $M$ with Lie algebroid $AG$. The tangent bundle $TG$ has a natural Lie groupoid structure over $TM$. This structure is obtained by applying the tangent functor to each of the structure maps defining $G$ (source, target, multiplication, inversion and identity section). We refer to $TG$ with this groupoid structure over $TM$ as the \textbf{tangent groupoid} of $G$. Notice that the set of composable pairs $(TG)_{(2)}=T(G_{(2)})$, and for $(g,h)\in G_{(2)}$ and a tangent groupoid pair $(X_g,Y_h)\in (TG)_{(2)}$ the multiplication map on $TG$ is 

$$X_g\bullet Y_h:=Tm(X_g,Y_h)$$

Consider now the cotangent bundle $T^*G$. It was shown in \cite{CDW}, that $T^*G$ is a Lie groupoid over $A^*G$. The source and target maps are defined by

$$\tilde{s}(\alpha_g)u=\alpha_g(Tl_g(u-Tt(u)))\quad \text{ and } \tilde{t}(\beta_g)v=\beta_g(Tr_g(v))$$

\noindent where $\alpha_g \in T^*_gG$, $u\in A_{s(g)}G$ and $\beta_g\in T^*_gG$, $v\in A_{t(g)}G$. The multiplication on $T^*G$ is defined by

$$(\alpha_g\circ \beta_h)(X_g\bullet Y_h)= \alpha_g(X_g)+ \beta_h(Y_h)$$

\noindent for $(X_g,Y_h)\in T_{(g,h)}G_{(2)}$.

We refer to $T^*G$ with the groupoid structure over $A^*G$ as the \textbf{cotangent groupoid} of $G$.

%%%%%%%%%%%%%%%%%%%%%%%%%%%%%%%%%%%%%%%%%%%%%%%%%%%%%%%%%%%%%%%%%%%%%%%

\subsection{Tangent and cotangent algebroids}\label{tangentalgebroids}

Let $q_A:A\rmap M$ be a \emph{vector bundle} over $M$. The tangent bundle $TA$ has a natural structure of a \emph{double vector bundle} \cite{Pradines}, given by the diagram below.

\begin{align}
\xy 
(-15,10)*+{TA}="t0"; (-15,-10)*+{A}="b0"; 
(15,10)*+{TM}="t1"; (15,-10)*+{M}="b1"; 
{\ar@<.25ex>^{Tq_A} "t0"; "t1"}; 
%{\ar@<-.5ex>_{t} "t0"; "t1"}; 
%{\ar@<.5ex>^{ s} "b0"; "b1"}; 
{\ar@<.25ex>_{q_A} "b0"; "b1"}; 
{\ar@<.25ex>_{p_A} "t0"; "b0"}; 
%{\ar@<-.5ex>_{s} "t0"; "b0"}; 
{\ar@<.25ex>^{p_M} "t1"; "b1"}; 
%{\ar@<-.5ex>_{s} "t1"; "b1"}; 
\endxy
\end{align}

Assume now that $q_A:A\rmap M$ has a Lie algebroid structure with anchor map $\rho_A:A\rmap TM$ and Lie bracket $[\cdot,\cdot]$ on $\Gamma_M(A)$.

As explained in \cite{M}, there is a canonical Lie algebroid structure on the vector bundle $Tq_A:TA\rmap TM$. Recall that there exists a \textbf{canonical involution} $J_M:TTM\rmap TTM$ which is a morphism of double vector bundles. In a local coordinates system $(x^i,\dot{x}^i,\delta x^i,\delta\dot{x}^i)$ on $TTM$ this map is given by $$J_M((x^i,\dot{x}^i,\delta x^i,\delta\dot{x}^i))=(x^i,\delta x^i,\dot{x}^i,\delta\dot{x}^i).$$

% \begin{align}\label{canonicalinvolution}
%\xy 
%(-15,10)*+{TTM}="t0"; (-15,-10)*+{TM}="b0"; 
%(15,10)*+{TTM}="t1"; (15,-10)*+{TM}="b1"; 
%{\ar@<.25ex>^{J_M} "t0"; "t1"}; 
%{\ar@<-.5ex>_{t} "t0"; "t1"}; 
%{\ar@<.5ex>^{ s} "b0"; "b1"}; 
%{\ar@<.25ex>_{\mathrm{Id}} "b0"; "b1"}; 
%{\ar@<.25ex>_{p_{TM}} "t0"; "b0"}; 
%{\ar@<-.5ex>_{s} "t0"; "b0"}; 
%{\ar@<.25ex>^{Tp_M} "t1"; "b1"}; 
%{\ar@<-.5ex>_{s} "t1"; "b1"}; 
%\endxy
%\end{align}

\noindent Now we can apply the tangent functor to the anchor map $\rho_A:A\rmap TM$, and then compose with the canonical involution to obtain a bundle map $\rho_{TA}:TA\rmap TTM$ defined by

$$\rho_{TA}:=J_M\circ T\rho_{A}.$$ 

\noindent This defines the tangent anchor map. In order to define the tangent Lie bracket, we observe that every section $u\in\Gamma_{M}(A)$ induces two types of sections of $Tq_A:TA\rmap TM$. The first type corresponds to the \textbf{linear section}$Tu:TM\rmap TA$, which is given by applying the tangent functor to the section $u:M\rmap A$. The second type of section is the \textbf{core} section $\hat{u}:TM\rmap TA$, which is defined by

$$\hat{u}(X)=T(0^A)(X)+_A\overline{u(p_M(X))},$$   

\noindent where $0^A:M\rmap A$ denotes the zero section, and $\overline{u(p_M(X))}=\frac{d}{dt}(tu(p_M(X)))\vert_{t=0}$. As observed in \cite{MX1}, sections of the form $Tu$ and $\hat{u}$ generate the module of sections $\Gamma_{TM}(TA)$. Therefore, the tangent Lie bracket is determined by

$$[Tu,Tv]=T[u,v], \quad [Tu,\hat{v}]=\widehat{[u,v]}, \quad[\hat{u},\hat{v}]=0,$$

\noindent and we extend to other sections by requiring the Leibniz rule with respect to the tangent anchor $\rho_{TA}$.

Following \cite{M}, the cotangent bundle of a Lie algebroid inherits a Lie algebroid structure. For that, let us explain the vector bundle structure $T^*A\rmap A^*$. If $(x^i,u^a)$ are local coordinates on $A$, we induce a local coordinates system $(x^i,u^a,p_i,\lambda_a)$ on $T^*A$, where $(p_i)$ determines a cotangent element in $T^{*}_{x}M$ and $(\lambda_a)\in A^*_{x}$ is a cotangent element with respect to the tangent direction to the fibers of $A$. Now the bundle projection $r:T^*A\rmap A^*$ is described locally by $r(x^i,u^a,p_i,\lambda_a)=(x^i,\lambda_a)$. These vector bundle structures define a commutative diagram

\begin{align}
\xy 
(-15,10)*+{T^*A}="t0"; (-15,-10)*+{A}="b0"; 
(15,10)*+{A^*}="t1"; (15,-10)*+{M}="b1"; 
{\ar@<.25ex>^{r} "t0"; "t1"}; 
%{\ar@<-.5ex>_{t} "t0"; "t1"}; 
%{\ar@<.5ex>^{ s} "b0"; "b1"}; 
{\ar@<.25ex>_{q_{A}} "b0"; "b1"}; 
{\ar@<.25ex>_{c_{A}} "t0"; "b0"}; 
%{\ar@<-.5ex>_{s} "t0"; "b0"}; 
{\ar@<.25ex>^{q_{A^*}} "t1"; "b1"}; 
%{\ar@<-.5ex>_{s} "t1"; "b1"}; 
\endxy
\end{align}

\noindent This endows $T^*A$ with a double vector bundle structure. Suppose that $q_A:A\rmap M$ carries a Lie algebroid structure. Then we can consider the dual bundle $A^*$ endowed with the linear Poisson structure induced by $A$. The cotangent bundle $T^*A^*\rmap A^*$ has the Lie algebroid structure determined by the linear Poisson bivector on $A^*$. There exists a Legendre type map $R:T^*A^*\rmap T^*A$ which is a anti-symplectomorphism with respect to the canonical symplectic structures, and it is locally defined by $R(x^i,\xi_a,p_i,u^a)=(x^i,u^a,-p_i,\xi_a)$. For an intrinsic definition see \cite{MX1,Tulc}.

\begin{definition}
 The \textbf{cotangent algebroid} of $A$ is the vector bundle $T^*A\rmap A^*$ equipped with the unique Lie algebroid structure that makes the Legendre type transform $R:T^*(A^*)\rmap T^*A$ into an isomorphism of Lie algebroids.
\end{definition}

Finally, recall also that the \textbf{Tulczyjew map} $\Theta_M:TT^*M\rmap T^*TM$ is the isomorphism, which in a local coordinates system $(x^i,p_i,\dot{x}^i,\dot{p}_i)$ is given by

$$\Theta_M(x^i,p_i,\dot{x}^i,\dot{p}_i)=(x^i,\dot{x}^i,\dot{p}_i,p_i).$$

See \cite{MX1,Tulc} for an intrinsic definition. Consider now a Lie groupoid $G$ over $M$ with Lie algebroid $AG$. There exists a natural injective bundle map

\begin{equation}\label{i_{AG}}
i_{AG}:AG\rmap TG
\end{equation}

\noindent The canonical involution $J_G:TTG\rmap TTG$ restricts to an isomorphism of Lie algebroids $j_G:T(AG)\rmap A(TG)$. More precisely, there exists a commutative diagram

\begin{align}\label{ATG}
\xy 
(-15,10)*+{T(AG)}="t0"; (-15,-10)*+{TTG}="b0"; 
(15,10)*+{A(TG)}="t1"; (15,-10)*+{TTG}="b1"; 
{\ar@<.25ex>^{j_G} "t0"; "t1"}; 
%{\ar@<-.5ex>_{t} "t0"; "t1"}; 
%{\ar@<.5ex>^{ s} "b0"; "b1"}; 
{\ar@<.25ex>_{J_G} "b0"; "b1"}; 
{\ar@<.25ex>_{T(i_{AG})} "t0"; "b0"}; 
%{\ar@<-.5ex>_{s} "t0"; "b0"}; 
{\ar@<.25ex>^{i_{A(TG)}} "t1"; "b1"}; 
%{\ar@<-.5ex>_{s} "t1"; "b1"}; 
\endxy
\end{align}

\noindent In particular, the Lie algebroid $A(TG)$ of the tangent groupoid is canonically isomorphic to the tangent Lie algebroid $T(AG)$ of $AG$. Similarly, the Lie algebroid of the cotangent groupoid $T^*G$ is isomorphic to the cotangent Lie algebroid $T^*(AG)$. For that, notice that the natural pairing $T^*G\oplus TG\rmap \R$ defines a groupoid morphism, and the application of the Lie functor yields a symmetric pairing $\langle\langle\cdot,\cdot\rangle\rangle:A(T^*G)\oplus A(TG)\rmap \R$, which is nondegenerate. See e.g. \cite{MX1,MX2}. In particular, we obtain an isomorphism $K_G:A(T^*G)\rmap A(TG)^*$, where the target dual is with respect to the fibration $A(TG)\stackrel{A(p_G)}{\longrightarrow} AG$. Now we define a Lie algebroid isomorphism

\begin{equation}\label{AT*G}
j'_G:A(T^*G)\rmap T^*(AG),
\end{equation}

\noindent determined by the composition $j'_G=j^*_{G}\circ K_G$, where $j^*_G: A(TG)^* \rmap T^*(AG)$ is the bundle map dual to the isomorphism $j_G:T(AG) \rmap A(TG)$. As $j_G:T(AG) \rmap A(TG)$ is a suitable restriction of the canonical involution $J_G:TTG \rmap TTG$, the isomorphism $j'_G$ is related to the Tulczyjew map $\Theta_G:TT^*G\rmap T^*TG$, via

$$j'_G=(Ti_{AG})^*\circ \Theta_G\circ i_{A(T^*G)}.$$

%%%%%%%%%%%%%%%%%%%%%%%%%%%%%%%%%%%%%%%%%%%%%%%%%%%%%%%%%%%%%%%%%   HERE STARTS A NEW SECTION !!!!!!%%%%%%%%%%%%%%%%%%%%%%%%%%%%%%%%%%%%%%%%%%%%%%%%%%%%%%%%%%%%%%%%%%%%%%%%%%%%

\subsection{Tangent lift of a Dirac structure}

The tangent lift of Dirac structures was originally studied by T. Courant \cite{Ctangent}, where tangent Dirac structures are described locally. In \cite{Vaisman} I. Vaisman gives an intrinsic construction of tangent Dirac structures, where the tangent lift of a Dirac structure is described via the sheaf of local sections defining a Dirac subbundle of $TTM\oplus T^*TM$. Here, we provide an alternative description of the tangent lift of a Dirac structure relied on the tangent lift of Lie algebroid structures described in the previous section. 

In order to fix our notation, we begin by summarizing some of the main properties of tangent lifts of vector fields and differential forms , see \cite{Grabowski, Yano}. Let $f\in C^{\infty}(M)$ be a smooth function. Then we have a pair of smooth functions on $TM$ defined by

$$f^v=f\circ p_M; \quad f^{T}=df.$$

\noindent We refer to $f^v$ and $f^{T}$ as the \textbf{vertical} lift and \textbf{tangent} lift of $f$, respectively. One can see easily that the algebra of functions $C^{\infty}(TM)$ is generated by functions of the form $f^v$ and $f^{T}$. Now, given a vector field $X$ on $M$ we define the \textbf{vertical} lift of $X$ as the vector field $X^v$ on $TM$ which acts on vertical and tangent lifts of functions as

$$X^v(f^v)=0, \quad X^v(f^{T})=(Xf)^v.$$

The \textbf{tangent} lift of $X$ is the vector field $X^{T}$ on $TM$, which acts on vertical and tangent lifts of functions in the following manner:

$$X^{T}(f^v)=(Xf)^v, \quad X^{T}(f^{T})=(Xf)^{T}.$$

\noindent It is easy to see that vertical and tangent lifts of vector fields generate the space of all vector fields on $TM$. Now let us consider a $1$-form $\alpha$ on a smooth manifold $M$. We define the \textbf{vertical} lift of $\alpha$ as the $1$-form $\alpha^v$ on $TM$, which is determined by its value at vertical and tangent lifts of vector fields,

$$\alpha^v(X^v)=0, \quad \alpha^v(X^{T})=(\alpha(X))^v.$$

\noindent The \textbf{tangent} lift of $\alpha$ is the $1$-form $\alpha^{T}$ on $TM$ defined by

$$\alpha^{T}(X^v)=(\alpha(X))^v, \quad \alpha^{T}(X^{T})=(\alpha(X))^{T}.$$

It is important to emphasize that vertical and tangent lifts of vector fields (resp. of $1$-forms) are sections of the usual vector bundle structure $T(TM)\stackrel{p_{TM}}{\longrightarrow}TM$ (resp. sections of $T^*(TM)\stackrel{c_{TM}}{\longrightarrow}TM$), and they do not define sections of the tangent prolongation vector bundle $T(TM)\stackrel{Tp_M}{\longrightarrow}TM$ (resp. of the tangent prolongation $T(T^*M)\stackrel{Tc_M}{\longrightarrow}TM$). However, there exists a canonical relation between vector fields (resp. $1$-forms) on $TM$ and sections of the tangent prolongation vector bundle $T(TM)\rmap TM$ (resp. $T(T^*M)\rmap TM$). Given a vector field $X$ and a $1$-form $\alpha$ on $M$, we consider the linear sections $TX, T\alpha$ and the core sections $\hat{X},\hat{\alpha}$ of the corresponding tangent prolongation vector bundles. It follows from the definition that

\begin{align}
 &J_M(TX)=X^T,\quad J_M(\hat{X})=X^v.\label{JMrelations}\\ 
&\Theta_M(T\alpha)=\alpha^T,\quad \Theta_{M}(\hat{\alpha})=\alpha^v.\label{thetarelations}
\end{align}

\noindent It turns out that many geometric properties of the direct sum vector bundle $T(TM)\oplus T^*(TM)$ can be understood in terms of tangent geometric properties of $T(TM)\oplus T(T^*M)$, using the canonical identification 

$$J_M\oplus \Theta_M:T(TM)\oplus T(T^*M)\rmap T(TM)\oplus T^*(TM).$$

Consider now a Dirac structure $L_M$ on $M$. Equivalently, we may think of $L_M$ as a Lie algebroid over $M$ with Lie bracket given by the Courant bracket on sections of $L_M$, and the anchor map $\rho_M$ is the natural projection from $L_M\subseteq TM\oplus T^*M$ onto $TM$. According to a construction of K. Mackenzie and P. Xu \cite{MX1}, we can consider the tangent prolongation Lie algebroid $TL_M\rmap TM$, with anchor map

$$\rho_{TM}=J_M\circ T\rho_M,$$

\noindent and Lie bracket defined by

$$[\hat{a}_1,\hat{a}_2]_{TL_M}=0, \quad [Ta_1,\hat{a}_2]_{TL_M}=\widehat{[a_1,a_2]}, \quad [Ta_1,Ta_2]_{TL_M}=T[a_1,a_2],$$

\noindent where $a_1,a_2$ are sections of $L_M\rmap M$. We denote by $L_{TM}$ the image of $TL_M$ under the natural bundle map $J_M\oplus \Theta_M:TTM\oplus TT^*M\rmap TTM\oplus T^*TM$. 

\begin{proposition}\label{tangentisotropic}

The subbundle $L_{TM}\subseteq TTM\oplus T^*TM$ is isotropic with respect to the non degenerate symmetric pairing $\langle\cdot,\cdot\rangle_{TM}$ defined on $TTM\oplus T^*TM$. 

\end{proposition}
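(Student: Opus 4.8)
The plan is to reduce the isotropy of $L_{TM}$ to the already-known isotropy of $L_M$ by exhibiting an explicit generating family of sections of $L_{TM}$ and checking the pairing only on these. Since the canonical symmetric pairing $\langle\cdot,\cdot\rangle_{TM}$ on $TTM\oplus T^*TM$ is $C^{\infty}(TM)$-bilinear, and hence tensorial, it suffices to verify that it vanishes on a family of sections that spans each fiber of $L_{TM}$.

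First I would identify the generators. Recall that $L_{TM}$ is the image under $J_M\oplus\Theta_M$ of the tangent prolongation Lie algebroid $TL_M\rmap TM$, and that (as recalled above, cf. \cite{MX1}) the module $\Gamma_{TM}(TL_M)$ is generated by the linear sections $Ta$ and the core sections $\hat{a}$, where $a=(X,\alpha)$ ranges over $\Gamma_M(L_M)$. Applying the identities \eqref{JMrelations} and \eqref{thetarelations} in each component, one gets $(J_M\oplus\Theta_M)(Ta)=(X^T,\alpha^T)$ and $(J_M\oplus\Theta_M)(\hat{a})=(X^v,\alpha^v)$. Hence the sections $(X^T,\alpha^T)$ and $(X^v,\alpha^v)$, with $(X,\alpha)\in\Gamma_M(L_M)$, generate $\Gamma_{TM}(L_{TM})$ and therefore span $L_{TM}$ fiberwise.

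Next I would compute the three possible pairings between generators associated to two sections $a_i=(X_i,\alpha_i)$ of $L_M$, using the defining properties of the vertical and tangent lifts. In the vertical--vertical case, $\alpha^v(X^v)=0$ gives $\langle(X_1^v,\alpha_1^v),(X_2^v,\alpha_2^v)\rangle_{TM}=0$. In the mixed case, $\alpha^v(X^T)=(\alpha(X))^v$ together with $\alpha^T(X^v)=(\alpha(X))^v$ yields $\langle(X_1^v,\alpha_1^v),(X_2^T,\alpha_2^T)\rangle_{TM}=(\alpha_1(X_2)+\alpha_2(X_1))^v=(\langle a_1,a_2\rangle)^v$. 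In the tangent--tangent case, $\alpha^T(X^T)=(\alpha(X))^T$ gives $\langle(X_1^T,\alpha_1^T),(X_2^T,\alpha_2^T)\rangle_{TM}=(\langle a_1,a_2\rangle)^T$. Since $L_M$ is isotropic, $\langle a_1,a_2\rangle=0$ as a function on $M$, so both its vertical lift $(\cdot)^v$ and its tangent lift $(\cdot)^T=d(\cdot)$ vanish identically. Thus all three pairings vanish, and by tensoriality $L_{TM}$ is isotropic.

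The step requiring the most care is the first one: confirming that the lifts exhibited really do generate $\Gamma_{TM}(L_{TM})$ and span the bundle fiberwise, so that testing the pairing on them is genuinely sufficient; once this is secured, the remainder is a direct bookkeeping with the lift identities in which the isotropy of $L_M$ does all the work. A subtle point worth flagging is that the lifts $X^v,X^T,\alpha^v,\alpha^T$ are sections of the ordinary bundles $T(TM)\rmap TM$ and $T^*(TM)\rmap TM$, \emph{not} of the tangent prolongations, which is precisely why the passage through $J_M\oplus\Theta_M$ and the identities \eqref{JMrelations}--\eqref{thetarelations} are needed to relate them to $Ta$ and $\hat{a}$.
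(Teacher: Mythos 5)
Your proof is correct, but it takes a genuinely different route from the paper's. The paper does not compute on generators at all: it first observes that applying the tangent functor to the pairing on $TM\oplus T^*M$ (and projecting onto the second factor) produces a nondegenerate symmetric pairing $\langle\langle\cdot,\cdot\rangle\rangle$ on $TTM\times_{TM}TT^*M$ for which $TL_M$ is automatically isotropic, and then invokes the known identity $\langle\langle \dot a_1,\dot a_2\rangle\rangle=\langle (J_M\oplus\Theta_M)\dot a_1,(J_M\oplus\Theta_M)\dot a_2\rangle_{TM}$, i.e.\ that $J_M\oplus\Theta_M$ is a fiberwise isometry (citing \cite{Grabowski, MX1}), to transport isotropy to $L_{TM}=(J_M\oplus\Theta_M)(TL_M)$. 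Your argument instead stays entirely inside $TTM\oplus T^*TM$ and verifies the vanishing of $\langle\cdot,\cdot\rangle_{TM}$ directly on the generating sections $a^v$, $a^T$ using the defining identities of the vertical and tangent lifts; in effect you are reproving, on a generating family, the isometry identity that the paper quotes. What your approach buys is self-containedness and explicit formulas ($\langle a_1^v,a_2^T\rangle_{TM}=\langle a_1,a_2\rangle^v$, $\langle a_1^T,a_2^T\rangle_{TM}=\langle a_1,a_2\rangle^T$) that make the mechanism visible -- the isotropy of $L_M$ kills both the vertical and the tangent lift of the function $\langle a_1,a_2\rangle$; what the paper's approach buys is brevity and independence from any choice of generators. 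Your flagged point about fiberwise spanning is handled correctly: since $J_M\oplus\Theta_M$ is a fiberwise linear isomorphism over the identity of $TM$ intertwining the two bundle structures, the images of $Ta$ and $\hat a$ do generate $\Gamma_{TM}(L_{TM})$ (as the paper itself asserts just after this proposition), and tensoriality of the pairing then makes the check on generators sufficient.
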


\begin{proof}

Consider the non degenerate symmetric pairing $\langle\cdot,\cdot\rangle_{M}$ defined on $TM\oplus T^*M$. The application of the tangent functor, followed by the projection onto de second factor, leads to a non degenerate symmetric pairing

 $$\langle\langle\cdot,\cdot\rangle\rangle:TTM\times_{TM}TT^*M\rmap \R,$$ 

\noindent for which the subbundle $TL_M\subseteq TTM\oplus TT^*M$ is isotropic. Finally, for every $\dot{a}_1,\dot{a}_2\in TL_M$ the well known identity

$$\langle\langle \dot{a}_1,\dot{a}_2\rangle\rangle=\langle (J_M\oplus \Theta_M)(\dot{a}_1),(J_M\oplus \Theta_M)(\dot{a}_2)\rangle_{TM},$$ 

\noindent says that the canonical map $J_M\oplus \Theta_M:T(TM)\oplus T(T^*M)\rmap T(TM)\oplus T^*(TM)$ is a fiberwise isometry with respect to the pairings $\langle\langle\cdot,\cdot\rangle\rangle$ and $\langle\cdot,\cdot\rangle_{TM}$; see for instance \cite{Grabowski, MX1}. In particular, $L_{TM}=(J_M\oplus\Theta_M)(TL_M)$ is isotropic with respect to the canonical pairing on $TTM\oplus T^*TM$.

\end{proof}

The tangent Lie algebroid $TL_M\rmap TM$ induces a unique Lie algebroid structure on $L_{TM}\rmap TM$ characterized by the property that $J_M\oplus \Theta_M:TL_M\rmap L_{TM}$ is a Lie algebroid isomorphism. The space of sections $\Gamma(L_{TM})$ is generated by sections of the form $a^T:=(J_M\oplus\Theta_M)(Ta)$ and $a^{v}:=(J_M\oplus\Theta_M)\hat{a}$, where $a$ is a section of $L_M\rmap M$. In particular the induced Lie bracket on sections of $L_{TM}$ is completely determined by identities 

$$[a^v_1,a^v_2]=0, \quad [a^{T}_1,a^{v}_2]=\Cour{a_1,a_2}^{v}, \quad [a^{T}_1,a^{T}_2]=\Cour{a_1,a_2}^{T},$$
\noindent and the Leibniz rule with respect to the induced anchor map $pr_{TTM}:L_{TM}\rmap TTM$.

\begin{proposition}\label{tangentintegrability}

The induced Lie bracket on sections $\Gamma(L_{TM})$ is a restriction of the Courant bracket $\Cour{\cdot,\cdot}_{TM} $ on sections of $TTM\oplus T^*TM$.

\end{proposition}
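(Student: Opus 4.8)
The plan is to verify the three defining bracket identities directly on the generating sections $a^T$ and $a^v$ and then to extend them by the Leibniz rule. First I would make the generators explicit: writing a section of $L_M$ as $a=(X,\alpha)$ with $X$ a vector field and $\alpha$ a $1$-form on $M$, the relations \eqref{JMrelations} and \eqref{thetarelations} give
$$a^T=(J_M\oplus\Theta_M)(Ta)=(X^T,\alpha^T),\qquad a^v=(J_M\oplus\Theta_M)(\hat a)=(X^v,\alpha^v),$$
so that the sections generating $\Gamma(L_{TM})$ are precisely the pairings of the tangent and vertical lifts of $X$ and $\alpha$. The whole computation then reduces to the interaction of the lifting operations with the operations $[\cdot,\cdot]$, $\Lie$, $d$ and $i$ entering the Courant bracket.

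Next I would collect the classical lift identities (see \cite{Grabowski, Yano}): for vector fields $[X^T,Y^T]=[X,Y]^T$, $[X^T,Y^v]=[X,Y]^v$, $[X^v,Y^v]=0$; the exterior differential commutes with both lifts, $d(\alpha^T)=(d\alpha)^T$ and $d(\alpha^v)=(d\alpha)^v$; and the Lie derivative and contraction obey the mixed rules $\Lie_{X^T}\beta^T=(\Lie_X\beta)^T$, $\Lie_{X^T}\beta^v=\Lie_{X^v}\beta^T=(\Lie_X\beta)^v$, $\Lie_{X^v}\beta^v=0$, together with $i_{X^T}\omega^T=(i_X\omega)^T$, $i_{X^T}\omega^v=i_{X^v}\omega^T=(i_X\omega)^v$ and $i_{X^v}\omega^v=0$. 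Each of these reduces, by testing against the generating functions $f^v$ and $f^T$ of $C^\infty(TM)$, to the defining properties of the lifts recalled above.

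With these identities at hand, the verification is a direct substitution into $\Cour{(X,\alpha),(Y,\beta)}_{TM}=([X,Y],\Lie_X\beta-i_Yd\alpha)$. For $a_i=(X_i,\alpha_i)$ I would compute
$$\Cour{a_1^T,a_2^T}_{TM}=\big([X_1,X_2]^T,\ (\Lie_{X_1}\alpha_2-i_{X_2}d\alpha_1)^T\big)=\Cour{a_1,a_2}^T,$$
and likewise $\Cour{a_1^T,a_2^v}_{TM}=\Cour{a_1,a_2}^v$ and $\Cour{a_1^v,a_2^v}_{TM}=0$; the remaining order follows from the isotropy of $L_M$, since $\Cour{a_1,a_2}_M+\Cour{a_2,a_1}_M=d\langle a_1,a_2\rangle_M$ vanishes for sections $a_1,a_2$ of $L_M$. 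These are exactly the identities defining the induced bracket, and in particular each Courant bracket of generators is again a section of $L_{TM}$.

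Finally I would invoke the Leibniz rule to pass from generators to arbitrary sections. The induced bracket on $L_{TM}$ has anchor $pr_{TTM}$, and the Courant bracket $\Cour{\cdot,\cdot}_{TM}$ obeys $\Cour{a,fb}_{TM}=f\Cour{a,b}_{TM}+(pr_{TTM}(a)f)\,b$ with respect to the same projection; since $\Gamma(L_{TM})$ is generated over $C^\infty(TM)$ by the sections $a^T$ and $a^v$, agreement on generators forces agreement on all sections. I expect the only real obstacle to be the careful bookkeeping of the mixed lift identities for $\Lie$ and $i$ in the second step --- the vector-field bracket rules are entirely standard, but the contraction and Lie-derivative rules in the mixed tangent/vertical cases must be checked against $f^v,f^T$ to be certain of which lift appears; once these are secured, the remainder is routine.
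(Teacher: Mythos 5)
Your proposal is correct and follows essentially the same route as the paper: both arguments reduce the claim to checking the three defining bracket identities on the generating sections $a^T$ and $a^v$, using the compatibility of tangent and vertical lifts with $\Lie$, $i$ and $d$, and then extend by the Leibniz rule. Your extra care with the mixed case $\Cour{a_1^v,a_2^T}$ (handled via isotropy of $L_M$ and the identity $\Cour{a,b}+\Cour{b,a}=d\langle a,b\rangle$) and with the Leibniz extension only makes explicit steps the paper leaves implicit.
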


\begin{proof}
 
Due to the identities (\ref{JMrelations}) and (\ref{thetarelations}), we only need to check that the Courant bracket on sections of $L_{TM}$, naturally induced by $J_M\oplus \Theta_M$, satisfies the bracket identities that determine the induced Lie bracket on $\Gamma(L_{TM})$. One observes that vertical and tangent lifts are compatible with Lie derivatives in the sense that

\begin{enumerate}
 \item $\Lie_{X^v}\alpha^v=0$

 \item $\Lie_{X^T}\alpha^v=(\Lie_X\alpha)^v$

 \item $\Lie_{X^T}\alpha^T=(\Lie_X\alpha)^T,$

\end{enumerate}

\noindent and we conclude that

\begin{enumerate}
 \item $\Cour{X^v\oplus \alpha^v,Y^v\oplus \beta^v}=0$

 \item $\Cour{X^T\oplus \alpha^T,Y^v\oplus \beta^v}=[X,Y]^v\oplus (\Lie_X\beta -i_{Y}d\alpha)^v$

\item$\Cour{X^T\oplus \alpha^T,Y^T\oplus \beta^T}=[X,Y]^T\oplus(\Lie_X\beta -i_{Y}d\alpha)^T$.

\end{enumerate}

Thus the Lie bracket on $\Gamma_{TM}(L_{TM})$ induced by the tangent Lie bracket on $\Gamma_{TM}(TL_M)$ coincides with the Courant bracket.

\end{proof}

We have shown the following.

\begin{proposition}
Let $M$ be a smooth manifold. There exists a natural map 

\begin{align*}
\rm{Dir}(&M)\rmap \mathrm{Dir}(TM)\\
&L_M\mapsto L_{TM}, 
\end{align*}

\noindent where $L_{TM}:=(J_M\oplus\Theta_M)(TL_M)$.
\end{proposition}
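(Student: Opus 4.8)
The plan is to show that $L_{TM}$ is a Dirac structure on $TM$, meaning that it is a maximal isotropic subbundle of $TTM\oplus T^*TM$ satisfying the Courant integrability condition. Isotropy and integrability have essentially been established already: Proposition \ref{tangentisotropic} gives that $L_{TM}$ is isotropic with respect to $\langle\cdot,\cdot\rangle_{TM}$, and Proposition \ref{tangentintegrability} shows that the induced Lie bracket on $\Gamma(L_{TM})$ is a restriction of the Courant bracket, which means precisely that $\Cour{\Gamma(L_{TM}),\Gamma(L_{TM})}_{TM}\subseteq \Gamma(L_{TM})$. Thus the only genuinely new point to verify is that $L_{TM}$ has the correct rank, i.e.\ that the isotropic subbundle $L_{TM}$ is in fact \emph{maximal} isotropic (Lagrangian), of rank $\dim TM=2\dim M$.

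First I would record the rank count. Since $L_M$ is Lagrangian in $TM\oplus T^*M$, it has rank $\dim M$, hence $TL_M\rmap TM$ is a vector bundle of rank $2\dim M$ sitting inside $TTM\oplus TT^*M$. The map $J_M\oplus\Theta_M$ is a fiberwise isomorphism of vector bundles, so $L_{TM}=(J_M\oplus\Theta_M)(TL_M)$ is again a subbundle of rank $2\dim M=\dim(TM)$ inside $TTM\oplus T^*TM$. Combined with the isotropy from Proposition \ref{tangentisotropic} and the fact that a Lagrangian subbundle of $\mathbb{T}(TM)$ is exactly an isotropic subbundle of half the rank $\dim(TM)$, this gives that $L_{TM}$ is maximal isotropic.

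Next I would assemble the three facts — isotropy, correct rank (hence maximality), and Courant integrability — to conclude that $L_{TM}\subseteq \mathbb{T}(TM)=TTM\oplus T^*TM$ is a Dirac structure on $TM$. It then remains to observe that the assignment $L_M\mapsto L_{TM}$ is well defined, natural and canonical: the construction depends only on the canonical objects $J_M$, $\Theta_M$ and the tangent prolongation Lie algebroid $TL_M$, all of which are functorial in $M$. I would note that this exhibits the stated map $\mathrm{Dir}(M)\rmap\mathrm{Dir}(TM)$.

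The main obstacle is conceptually minor but must be stated carefully: one has to make sure the rank of $L_{TM}$ is genuinely $\dim(TM)$ rather than something smaller, which amounts to confirming that $TL_M$ is a subbundle (constant rank $2\dim M$) and that $J_M\oplus\Theta_M$ restricts to a fiberwise \emph{injective} map on it. Since $J_M\oplus\Theta_M$ is a global fiberwise isomorphism of the ambient bundles, injectivity is automatic, and the constancy of the rank follows from $L_M$ being a genuine subbundle of $TM\oplus T^*M$; so the only real work, namely isotropy and integrability, has already been carried out in the two preceding propositions, and the present statement is essentially a bookkeeping of those results together with the rank count.
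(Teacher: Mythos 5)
Your proof is correct and follows essentially the same route as the paper, which presents this proposition as a direct consequence of Propositions \ref{tangentisotropic} and \ref{tangentintegrability} ("We have shown the following"). The only addition is that you make explicit the rank count establishing maximal isotropy, which the paper leaves implicit in the fact that $J_M\oplus\Theta_M$ is a fiberwise isomorphism carrying the rank-$2\dim M$ bundle $TL_M$ onto $L_{TM}$; this is a worthwhile clarification but not a different argument.
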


The Dirac structure $L_{TM}\in \mathrm{Dir}(TM)$ given by the proposition above is referred to as the \textbf{tangent Dirac structure} induced by $L_M\in\mathrm{Dir}(M)$. It is straightforward to check that this construction unifies the tangent lift of both closed $2$-forms and Poisson bivectors. Additionally, the presymplectic foliation of $L_{TM}$ corresponds to taking the tangent bundle of each leaf endowed with the tangent lift of the leafwise presymplectic forms defined by $L_M$.

%%%%%%%%%%%%%%%%%%%%%%%%%%%%%%%%%%%%%%%%%%%%%%%%

%%%%%%%%%%%%%%%%%%%%%%%%%%%%%%%%%%%%%%%%%%%%%%%%%%%%%%%%%%%%%%%%%%%%%%%%%%%%%%%%%%%%%%%%%%%%%%%%%%%%%%%%%%%%%%%%%%%%%%%%%%%%%%%%%%%%%%%%%%%%%

\section{Multiplicative Dirac structures}\label{DiracGroupoids}

This section introduces the main objects of study of this work, that is, Lie groupoids equipped with Dirac structures compatible with the groupoid multiplication, including both multiplicative Poisson and closed $2$-forms as particular cases.

\subsection{Definition and main examples}

Let $G$ be a Lie groupoid over $M$, with Lie algebroid $AG$. Consider the direct sum Lie groupoid $\mathbb{T}G=TG\oplus T^*G$ with base manifold $TM\oplus A^*G$.

\begin{definition}\label{def:multiplicativedirac}

Let $G$ be a Lie groupoid over $M$. A Dirac structure $L_G$ on $G$ is said to be \textbf{multiplicative} if $L_G\subseteq TG\oplus T^{*}G$ is a subgroupoid over some subbundle $E\subseteq TM\oplus A^*G$. 

\end{definition}

We refer to a pair $(G,L_G)$, made up of a Lie groupoid $G$ and a multiplicative Dirac structure $L_G$ on $G$, as a \textbf{Dirac groupoid}. We use the notation $\mathrm{Dir}_{\textit{mult}}(G)$ to indicate the set consisting of all multiplicative Dirac structures on $G$.

It follows from the multiplicativity of $L_G$ that $E\subseteq TM\oplus A^*G$ is a vector subbundle. In particular, a multiplicative Dirac structure $L_G$ on a Lie groupoid $G$ defines a $\mathcal{VB}$-subgroupoid $L_G\subseteq \mathbb{T}G$.

\begin{example}

Let $\omega_G$ be a closed multiplicative $2$-form on a Lie groupoid $G$. The multiplicativity property of $\omega_G$ is equivalent to saying that the bundle map $\omega^{\sharp}_G:TG\rmap T^*G$ is a morphism of Lie groupoids. Hence, the corresponding Dirac structure $L_{\omega_G}=\text{Graph}(\omega_G)\subseteq \mathbb{T}G$ is a multiplicative Dirac structure. In this case we have a groupoid $L_{\omega_G}\rightrightarrows E$ where $E\subseteq TM\oplus A^*G$ is the subbundle given by the graph of the bundle map $-\sigma^t$ determined by the \textbf{IM-$2$-form} (see \cite{BCWZ}) $\sigma$ associated to $\omega_G$.

\end{example}

\begin{example}

Let $(G,\pi_G)$ be a Poisson groupoid. The multiplicativity of $\pi_G$ is equivalent to saying that $\pi^{\sharp}_G:T^*G\rmap TG$ is a morphism of Lie groupoids. Therefore, the associated Dirac structure $L_{\pi_G}=\text{Graph}(\pi_G)\subseteq \mathbb{T}G$ defines a multiplicative Dirac structure. In this case we have a groupoid $L_{\pi_G}\rightrightarrows E$ where $E\subseteq TM\oplus A^*G$ is the subbundle given by the graph of dual anchor map $\rho_{A^*G}:A^*G\rmap TM$

\end{example}

The examples discussed previously show that Dirac groupoids lead to a natural generalization of Poisson groupoids and presymplectic groupoids. Our main aim is to describe Dirac groupoids infinitesimally, establishing in particular, a connection between such a infinitesimal description and Lie bialgebroids and IM-$2$-forms.

Haw
\subsection{More examples of multiplicative Dirac structures}

In addition to multiplicative closed $2$-forms and multiplicative Poisson bivectors, there are several interesting multiplicative Dirac structures, which will be discussed throughout this subsection.

\subsubsection{\textbf{Foliated Groupoids}}

A regular distribution $F_G\subseteq TG$ is called \textbf{multiplicative} if it defines a Lie subgroupoid of the tangent groupoid $TG$. A \textbf{foliated groupoid} is a pair $(G,F_G)$ where $G$ is a Lie groupoid and $F_G$ is an involutive multiplicative regular distribution. In this case, the Dirac structure $F_G\oplus F^{\circ}_G\subseteq \mathbb{T}G$ is easily seen to be a multiplicative Dirac structure on $G$. The foliation tangent to an involutive multiplicative distribution is called a \textbf{multiplicative foliation}. Multiplicative foliations which are simultaneously transversal to the $s$-fibration and to the $t$-fibration were studied in \cite{T}, providing interesting examples of noncommutative Poisson algebras. Also, multiplicative foliations arise in the context of geometric quantization of symplectic groupoids, namely, as polarizations compatible with a symplectic groupoid structure (see \cite{Eli}). In addition, the notion of multiplicative foliation has appeared in connection with exterior differential systems. For more details see \cite{CSS} and the references therein.

\subsubsection{\textbf{Dirac Lie groups}}

Dirac Lie groups, that is, Lie groups equipped with multiplicative Dirac structures were first studied by the author in \cite{Ortiz1} providing a generalization of Poisson Lie groups within the category of Lie groups. In that work, it is shown that, modulo regularity issues, Dirac Lie groups are given by the pull back (in the sense of Dirac structures) of Poisson Lie groups via a surjective submersion which is also Lie group morphism. Notice that whenever a Lie groupoid $G$ over $M$ is equipped with a multiplicative Dirac structure, then for every $x\in M$, the isotropy Lie group $G_{x}:=s^{-1}(x)\cap t^{-1}(x)$ inherits a Dirac structure $L_{G_x}$ making the pair $(G_x,L_{G_x})$ into a Dirac Lie group. 

We emphasize that different notions of Dirac Lie groups exist in the literature. For instance, Li-Bland and Meinrenken have proposed in \cite{LiBland-Meinrenken} a notion of multiplicativity which includes interesting examples of \emph{twisted} Dirac structures on Lie groups such as the Cartan-Dirac structure on a compact Lie group.

\subsubsection{\textbf{Tangent lift of a multiplicative Dirac structure}}

It was proved in \cite{GrabUrbanski} that whenever a Lie group $G$ carries a multiplicative Poisson bivector $\pi_G$, then the tangent Lie group $TG$ equipped with the tangent Poisson structure $\pi_{TG}$ becomes a Poisson Lie group. It is easy to extend the multiplicative Poisson case to abstract multiplicative Dirac structures on Lie groupoids. Assume that $G$ is a Lie groupoid over $M$ and consider the tangent groupoid $TG$ over $TM$ explained in section \ref{tangentgroupoids}. Then, the tangent Dirac structure $L_{TG}\subseteq TTG\oplus T^*TG$ induced by a multiplicative Dirac structure $L_G\subseteq TG\oplus T^*G$ is also a multiplicative Dirac structure. Indeed, first observe that the bundle map $J_G:TTG\rmap TTG$ is a groupoid isomorphism over $J_M:TTM\rmap TTM$. Similarly, the bundle map $\Theta_G:TT^*G\rmap T^*TG$ is a groupoid isomorphism over the canonical identification $I:T(A^*G)\rmap (T(AG))^*$. Since $L_G$ is a Lie subgroupoid of $TG\oplus T^*G$, then the tangent functor yields a Lie subgroupoid $TL_G$ of $TTG\oplus TT^*G$. Due to the fact that $L_{TG}$ is the image of $TL_G$ via the groupoid isomorphism $J_G\oplus \Theta_G$, we see that $L_{TG}$ inherits a natural structure of Lie subgroupoid of $TTG\oplus T^*TG$. Hence we conclude that $L_{TG}$ defines a multiplicative Dirac structure on $TG$.

\subsubsection{\textbf{Symmetries of multiplicative Dirac structures}}\label{symmetriesmultiplicativedirac}

 Let $L_G$ be a multiplicative Dirac structure on a Lie groupoid $G\rightrightarrows M$, and let $H$ be a Lie group acting on $G$ by groupoid automorphisms. Assume that the $H$-action is free and proper and that the $H$-orbits coincide with the characteristic leaves of $L_G$. In this case the quotient space $G/H$ inherits the structure of a Lie groupoid over $M/H$. Moreover, since $G/H$ is the space of characteristic leaves of $L_G$, we conclude that there exists a Poisson structure $\pi_{red}$ on $G/H$, making the quotient map $G\rmap G/H$ into both a backward and forward Dirac map. This fact together with the multiplicativity of $L_G$ imply that $\pi_{red}$ is a multiplicative Poisson bivector. In other words, the quotient space $G/H$ is a Poisson groupoid. In the case where $L_G$ is the graph of a multiplicative Poisson bivector and the action is Hamiltonian in the sense of \cite{RuiDavid}, this recovers some of the results about reduction of Poisson groupoids carried out in \cite{RuiDavid}.

\subsubsection{\textbf{Multiplicative $B$-field transformations}}

Let $L\subseteq \mathbb{TM}$ be a Lagrangian subbundle. Given a $2$-form $B\in\Omega^2(M)$ one can construct the Lagrangian subbundle $\tau_{B}(L)\subseteq \mathbb{T}M$ defined by

$$\tau_B(L)=\{X\oplus\alpha + i_{X}B\mid X\oplus\alpha\in L \}.$$

A straightforward computation shows that $\tau_B(L)$ defines a Dirac structure on $M$ if and only if $B$ is a closed $2$-form. See for instance \cite{B, Gualtieri}. In this case, we say that the Dirac structure $\tau_B(L)$ is obtained out of $L$ by a \textbf{$B$-field transformation}. 

Assume now that $L_G$ is a multiplicative Dirac structure on a Lie groupoid $G$. Given a multiplicative closed $2$-form $B_G$ on $G$, one can consider the bundle map  $\tau_{B_G}:\mathbb{T}G\rmap \mathbb{T}G$, $X\oplus \alpha \mapsto X\oplus \alpha + i_X(B_G)$. It follows from the multiplicativity of $B_G$ that $\tau_{B_G}$ is a Lie groupoid isomorphism. As a result, the Dirac structure $\tau_{B_G}(L_G)$ on $G$ is multiplicative. Our interest on $B$-field transformations of multiplicative Dirac structures is motivated by the work carried out in \cite{B, BR}, where the authors study the connection between certain $B$-field transformations of symplectic and Poisson groupoids and the notion of Morita equivalence of Poisson manifolds.

\subsubsection{\textbf{Generalized Complex Groupoids}}

Generalized complex structures were introduced by Hitchin \cite{Hitchin} and further developed by Gualtieri \cite{Gualtieri}. Given a smooth manifold $M$, one can consider the complexified vector bundle $\mathbb{T}_{\C}M:=\mathbb{T}M\otimes \C$ endowed with the complex Courant bracket and the complex pairing $\langle\cdot,\cdot\rangle$. A generalized complex structure on $M$ is a complex Dirac structure $L\subseteq \mathbb{T}_{ \C}M$ such that $L\cap\overline{L}=\{0\}$, where $\overline{L}$ denotes the conjugate of $L$. Complexified versions of multiplicative Dirac structures gives rise to generalized complex groupoid. More concretely, let $G$ be a Lie groupoid equipped with a generalized complex structure $L_G$. We say that $(G,L_G)$ is a \textbf{generalized complex groupoid} if $L_G\subseteq \mathbb{T}_{\C}G$ is a Lie subgroupoid. Generalized complex groupoids were introduced in \cite{JotzStienonXu} under the name of Glanon groupoids. Structures such as symplectic groupoids and holomorphic Poisson groupoids are special instances of generalized complex groupoids.

%%%%%%%%%%%%%%%%%%%%%%%%%%%%%%%%%%%%%%%%%%%%%%%%%%%%%%%%%%%%%%%%%%%%%%%%%%%%%

%%%%%%%%%%%%%%%%%%%%%%%%%%%%%%%%%%%%%%%%%%%%%%%%%%%%%%%%%%%%%%%%%%%%%%%%%%%%%%%%%%%%%%%%%%%%%%%%%%%%%%%%%%%%%%%%%%%%%%%%%%%%%%%%%%%%%%%%%%%%%%

\section{Dirac Algebroids}\label{DiracAlgebroids}

In this section we study Lie algebroids equipped with Dirac structures compatible with both the linear and Lie algebroid structure.

\subsection{Definition and main examples}

Let $A\rmap M$ be a vector bundle. A Poisson bivector $\pi_A$ on $A$ is \textbf{linear} if the map $\pi^{\sharp}_A:T^*A\rmap TA$ is a morphism of double vector bundles. Similarly, a $2$-form $\omega_A$ on $A$ is \textbf{linear} if the map $\omega^{\sharp}_A:TA\rmap T^*A$ is a morphism of double vector bundles. In this case, the bundle map $\omega^{\sharp}_A$ covers a bundle morphism $\lambda:TM\rmap A^*$. As shown in \cite{Kurbanski}, a linear $2$-form $\omega_A$ on a vector bundle $A\rmap M$ is closed if and only if $\omega_A=-(\lambda^t)^*\omega_{can}$, where $\omega_{can}$ is the canonical symplectic form on $T^*M$ and $\lambda^t:A\rmap T^*M$ is a fiberwise dual map of $\lambda:TM\rmap A^*$. The definition below includes both linear Poisson bivectors and linear closed $2$-forms as special instances.

\begin{definition}

A Dirac structure $L_A$ on $A$ is called \textbf{linear} if $L_A\subseteq \mathbb{T}A$ is a double vector subbundle of $\mathbb{T}A$.

\end{definition}

A linear Dirac structure $L_A\subseteq \mathbb{T}A$ is not only a vector bundle over $A$, but also a vector bundle over a subbundle $E\subseteq TM\oplus A^*$. It follows directly from the definition that graphs of linear Poisson bivector and linear closed $2$-forms define linear Dirac structures. Linear Dirac structures arise also in connection with Lagrangian and Hamiltonian mechanics, see e.g. \cite{GrabGrab}.

Assume now that $A\rmap M$ carries also a Lie algebroid structure. Consider the direct sum Lie algebroid $\mathbb{T}A=TA\oplus T^*A$, whose base manifold is $TM\oplus A^*$.

\begin{definition}

A Dirac structure $L_A$ on $A$ is called \textbf{morphic} if $L_A$ is a linear Dirac structure which is also a Lie subalgebroid of $\mathbb{T}A$.

\end{definition}

We denote by $\mathrm{Dir}_{morph}(A)$ the space of morphic Dirac structures on the Lie algebroid $A$.

A pair $(A,L_A)$ where $A$ is a Lie algebroid endowed with a morphic Dirac structure $L_A$ will be referred to as a \textbf{Dirac algebroid}.

\begin{example}

Let $\pi_A$ be a linear Poisson bivector on a Lie algebroid $A\rmap M$. Then, the Dirac structure given by the graph of $\pi_A$ is morphic if and only if $\pi^{\sharp}_A:T^*A\rmap TA$ is a Lie algebroid morphism. As shown in \cite{MX1}, this is equivalent to the pair $(A,A^*)$ being a Lie bialgebroid.

\end{example}

\begin{example}

Let $\omega_A$ be a linear closed $2$-form on a Lie algebroid $A\rmap M$, i.e. $\omega_A=-\sigma^*\omega_{can}$, for some bundle map $\sigma:A\rmap T^*M$. The Dirac structure defined by the graph of $\omega_A$ is morphic if and only if $\omega^{\sharp}_A:TA\rmap T^*A$ is a Lie algebroid morphism. Equivalently, as shown in \cite{BCO}, the bundle map $\sigma:A\rmap T^*M$ is an IM-$2$-form on $A$. The notion of IM-$2$-form was introduced in \cite{BCWZ} motivated by the problem of the integration of Dirac structures. See also \cite{AriasCrainic} where IM-$2$-forms arise in connection with the Weil algebra and the Van Est isomorphism.

\end{example}

\subsection{More examples of Dirac algebroids}

In addition to both morphic Poisson structures and morphic closed $2$-forms, there are more examples of morphic Dirac structures, which we proceed to explain them below.

\subsubsection{\textbf{Foliated algebroids}}

Let $A$ be a Lie algebroid and $F_A\subseteq TA$ an involutive subbundle which is also a Lie subalgebroid of $TA\rmap TM$. In this case we say that $(A,F_A)$ is a \textbf{foliated algebroid}. One can easily check that the Dirac structure $F_A\oplus F^{\circ}_A\subseteq \mathbb{T}A$ is a morphic Dirac structure. Foliated algebroids were studied in \cite{Eli} as a way to promote the notion of polarization in geometric quantization to the category of Lie algebroids. Also, a detailed discussion about foliated algebroids can be found in \cite{JotzOrtiz}.  

\subsubsection{\textbf{Dirac Lie algebras}}

Let $\mathfrak{g}$ be a Lie algebra. In this case, morphic Dirac structures are Lie subalgebroids of $T\mathfrak{g}\oplus T^*\mathfrak{g}\rmap \mathfrak{g}^*$. It follows from \cite{Ortiz1} that Dirac Lie algebras are suitable pull backs of Lie bialgebras.

\subsubsection{\textbf{Tangent lifts of Dirac algebroids}}\label{tangentliftmorphic}

Let $(A,L_A)$ be a Dirac algebroid. Consider the tangent Dirac structure $L_{TA}$ on $TA$. By definition, the tangent Dirac structure is given by $L_{TA}:= (J_A\oplus \Theta_A)(TL_A)$, where $TL_A\rmap TM$ is the tangent algebroid associated to the Dirac structure $L_A$ viewed as a Lie algebroid over $A$. Since the bundle map $J_A\oplus \Theta_A:TTA\oplus TT^*A\rmap TTA\oplus T^*TA$ is a Lie algebroid isomorphism, we conclude that $L_{TA}\subseteq \mathbb{T}TA$ is a Lie subalgebroid. Therefore, the pair $(TA,L_{TA})$ is a Dirac algebroid.

\subsubsection{\textbf{Symmetries of Dirac algebroids}}

Let $(A,L_A)$ be a Dirac algebroid. Consider a Lie group $H$ acting on $A$ by Lie algebroid automorphisms. Assume that the action is free and proper and that the $H$-orbits coincide with the characteristic leaves of $L_A$. One can check that the orbit space $A/H$ inherits a Lie algebroid structure over $M/H$, making the quotient map $A\rmap A/H$ into a Lie algebroid morphism. Since the $H$-orbits are exactly the characteristic leaves of $L_A$, one concludes that $A/H$ is equipped with a unique Poisson bivector $\pi_{red}$ determined by the fact that $A\rmap A/H$ is a forward and backward Dirac map. Since $L_A$ is morphic, we conclude that $\pi_{red}$ is a morphic Poisson structure on $A/H$. In particular, due to \cite{MX1}, the pair $(A/H, (A/H)^*)$ is a Lie bialgebroid.  In the special case where $L_A$ is the graph of a morphic Poisson structure on $A$ and the action is Hamiltonian in the sense of \cite{RuiDavid}, this recovers the reduction of Lie bialgebroids carried out in \cite{RuiDavid}.

\subsubsection{\textbf{Morphic $B$-field transformations}}

Let $(A,L_A)$ be a Dirac algebroid. Associated to a morphic closed $2$-form $B_A$ on $A$ is the Lie algebroid automorphism $\tau_{B_A}:\mathbb{T}A\rmap \mathbb{T}A, (X,\alpha)\mapsto (X,\alpha + i_XB_A)$. The Dirac structure $\tau_{B_A}(L_A)\subseteq \mathbb{T}A$ obtained out of $L_A$ by applying the B-field transformation $\tau_{B_A}$ is morphic. Therefore, the pair $(A,\tau_{B_A})$ is a Dirac algebroid. In particular, $B$-field transformations of morphic Poisson structures (i.e. Lie bialgebroid structures on $(A,A^*)$) by morphic closed $2$-forms are always morphic Dirac structures. If the $B$-field transformation is admissible, that is, the resulting Dirac structure is the graph of a Poisson bivector, such a bivector is necessarily morphic as well. In particular, we get a new bialgebroid structure on $(A,A^*)$ referred to as a \textbf{gauge transformation} of the Lie bialgebroid $(A,A^*)$. Gauge transformations of Lie bialgebroids were introduced in \cite{B} motivated by the study of gauge transformations of Poisson groupoids and Morita equivalence of Poisson manifolds.

\subsubsection{\textbf{Generalized Complex Algebroids}}

Let $A\rmap M$ be a Lie algebroid. Consider the complexified Lie algebroid $\mathbb{T}_{\C}A=(TA\oplus T^*A)\otimes \C$ whose base manifold is $(TM\oplus A^*)\otimes \C$. A generalized complex structure $L_A$ on $A$ is \textbf{morphic} if $L_A\subseteq \mathbb{T}_{\C}A$ is a Lie subalgebroid. In this case, we say that $(A,L_A)$ is a \textbf{generalized complex algebroid}. The notion of generalized complex algebroid was introduced in \cite{JotzStienonXu} under the name of Glanon algebroids. Generalized complex algebroids include holomorphic Poisson structures and holomorphic Lie bialgebroids as particular cases.

%%%%%%%%%%%%%%%%%%%%%%%%%%%%%%%%%%%%%%%%%%%%%%%%%%%%%%%%%%%%%%%%%%%%%%%%%%%%%%%%%%%%%%%%%%%%%%%%%%%%%%%%%%%%%%%%%%%%%%%%%%%%%%%%%%%%%%%%%%%%%

\section{Infinitesimal description of multiplicative Dirac structures}\label{frommulttolin}

This section is the main part of the present work. Here we show that Dirac algebroids correspond to the infinitesimal counterpart of Dirac groupoids.

\subsection{The canonical $\mathcal{CA}$-groupoid}

The main idea for studying multiplicative Dirac structures infinitesimally, is based on the following observation. Given a Lie groupoid $G$ over $M$, the canonical geometric objects associated to $\mathbb{T}G$ that are used to define Dirac structures (symmetric pairing and Courant bracket) are suitably compatible with the groupoid structure of $\mathbb{T}G$. This compatibility makes $\mathbb{T}G$ into a $\mathcal{CA}$-groupoid. The notion of $\mathcal{CA}$-groupoid was suggested by Mehta in \cite{Mehta} and further studied by Li-Bland and \v{S}evera \cite{LiblandSevera}. More precisely, let  $\langle\cdot,\cdot\rangle_G$ be the nondegenerate symmetric pairing on the direct sum Lie groupoid $\mathbb{T}G$.

\begin{proposition}\label{multiplicativepairing}
The canonical pairing defines a morphism of Lie groupoids 
$$\langle\cdot,\cdot\rangle_G:\mathbb{T}G\oplus \mathbb{T}G\rmap \R,$$

\noindent where $\R$ is equipped with the usual abelian group structure. 

\end{proposition}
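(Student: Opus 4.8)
The plan is to reduce the statement to a single direct verification of multiplicativity, using the defining formula for the cotangent groupoid multiplication recalled in Subsection \ref{tangentgroupoids}. First I would make the groupoid structures explicit. The domain $\mathbb{T}G\oplus\mathbb{T}G$ is the Whitney sum of two copies of the direct sum groupoid $\mathbb{T}G=TG\oplus T^*G$, hence a Lie groupoid over $(TM\oplus A^*G)\oplus(TM\oplus A^*G)$ with multiplication performed componentwise; recall that on $\mathbb{T}G$ itself the product of $(X_g,\alpha_g)$ and $(Y_h,\beta_h)$ is $(X_g\bullet Y_h,\alpha_g\circ\beta_h)$, where $\bullet=Tm$ is the tangent multiplication and $\circ$ is the cotangent multiplication characterized by $(\alpha_g\circ\beta_h)(X_g\bullet Y_h)=\alpha_g(X_g)+\beta_h(Y_h)$. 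The target $\R$ is regarded as a Lie groupoid over a point, namely the additive group. A morphism of Lie groupoids into $\R$ amounts to a smooth map that is compatible with the base maps (automatic, since the base of $\R$ is a single point), sends units to $0$, and is multiplicative on composable pairs.

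The core of the argument is the multiplicativity identity. Given two composable elements of $\mathbb{T}G\oplus\mathbb{T}G$, say $\xi=((X_g,\alpha_g),(Y_g,\beta_g))$ over $g$ and $\xi'=((X'_h,\alpha'_h),(Y'_h,\beta'_h))$ over $h$ with $(g,h)\in G_{(2)}$, their product is $\xi\cdot\xi'=((X_g\bullet X'_h,\alpha_g\circ\alpha'_h),(Y_g\bullet Y'_h,\beta_g\circ\beta'_h))$; note that composability in $\mathbb{T}G\oplus\mathbb{T}G$ guarantees precisely that all four tangent products and all four cotangent products occurring here are defined. I would then expand $\langle\cdot,\cdot\rangle_G$ on this product,
\[
(\alpha_g\circ\alpha'_h)(Y_g\bullet Y'_h)+(\beta_g\circ\beta'_h)(X_g\bullet X'_h),
\]
and apply the defining relation for $\circ$ separately to each term, which is legitimate since $(Y_g,Y'_h)$ and $(X_g,X'_h)$ lie in $T_{(g,h)}G_{(2)}$. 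This yields $\alpha_g(Y_g)+\alpha'_h(Y'_h)+\beta_g(X_g)+\beta'_h(X'_h)$, which regroups exactly as $\langle(X_g,\alpha_g),(Y_g,\beta_g)\rangle_G+\langle(X'_h,\alpha'_h),(Y'_h,\beta'_h)\rangle_G$. Thus the pairing is additive with respect to the componentwise groupoid multiplication.

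Finally I would dispatch the remaining morphism axioms cheaply. Smoothness is clear, since the pairing is a smooth fibrewise bilinear map, and compatibility with source and target is vacuous because $\R$ has a single object. Writing $P\colon\mathbb{T}G\oplus\mathbb{T}G\rmap\R$ for the map induced by $\langle\cdot,\cdot\rangle_G$, the condition on units then follows from multiplicativity itself: every unit $u$ of the domain groupoid is idempotent and composable with itself, so the additivity just established gives $P(u)=P(u\cdot u)=P(u)+P(u)$, forcing $P(u)=0$.

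I do not expect a genuine obstacle in this argument: the entire content is packaged into the defining formula for the cotangent groupoid multiplication, which was designed precisely so that pairing a product against a product splits additively. The only point requiring care is bookkeeping — correctly identifying the componentwise groupoid structure on $\mathbb{T}G\oplus\mathbb{T}G$ and matching up the composability of the four factors so that the cotangent formula applies — after which the key identity is a one-line application of that formula twice.
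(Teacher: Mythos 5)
Your argument is essentially identical to the paper's proof: both reduce the claim to checking multiplicativity (since $\R$ is a groupoid over a point), compute the componentwise product in $\mathbb{T}G\oplus\mathbb{T}G$, and apply the defining relation $(\alpha_g\circ\beta_h)(X_g\bullet Y_h)=\alpha_g(X_g)+\beta_h(Y_h)$ of the cotangent multiplication to each of the two terms of the pairing. The extra remarks on units and smoothness are correct but not needed beyond what the paper records.
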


\begin{proof}
 
Since $\R$ is a groupoid over a point, we only need to check the compatibility of $\langle\cdot,\cdot\rangle_G$ with the corresponding groupoid multiplications. For that, consider elements $(X_g\oplus\alpha_g),(Y_g\oplus\beta_g)\in \mathbb{T}_gG$ and $(X'_h\oplus\alpha'_h),(Y'_h\oplus\beta'_h)\in \mathbb{T}_hG$. Then by definition of the groupoid structure on $\mathbb{T}G\oplus\mathbb{T}G$, we have

$$((X_g\oplus\alpha_g)\oplus(Y_g\oplus\beta_g))*((X'_h\oplus\alpha'_h)\oplus(Y'_h\oplus\beta'_h))=(X_g\bullet X'_h \oplus \alpha_g\circ\alpha'_h)\oplus(Y_g\bullet Y'_h\oplus\beta_g\circ\beta'_h),$$

\noindent therefore one gets

\begin{align*}
\langle (X_g\bullet X'_h\oplus\alpha_g\circ\alpha'_h),(Y_g\bullet Y'_h\oplus\beta_g\circ\beta'_h)\rangle_G
=&(\alpha_g\circ\alpha'_h)(Y_g\bullet Y'_h) + (\beta_g\circ\beta'_h)(X_g\bullet X'_h)\\
=& \alpha_g(Y_g)+\alpha'_h(Y'_h) + \beta_g(X_g) + \beta'_h(X'_h)\\
=& \langle (X_g\oplus\alpha_g),(Y_g,\beta_g)\rangle_G + \langle (X'_h\oplus\alpha'_h),(Y'_h\oplus\beta'_h)\rangle_G
\end{align*}

This proves the statement.
\end{proof}

In order to explain the relation between the Courant bracket and the Lie groupoid structure on the direct sum vector bundle $\mathbb{T}G=TG\oplus T^*G$, we consider the direct product Courant algebroid $\mathbb{T}G\times\mathbb{T}G\rmap G\times G$. Every section $a^{(2)}$ of $\mathbb{T}G\times \mathbb{T}G$ can be written as 

$$a^{(2)}=a_1\circ pr_1\oplus a_2\circ pr_2,$$

 \noindent where $a_1,a_2$ are sections of $\mathbb{T}G$, and $pr_1,pr_2:\mathbb{T}G\times\mathbb{T}G\rmap \mathbb{T}G$ denote the natural projections. The direct product bracket on sections of $\mathbb{T}G\times\mathbb{T}G$ is defined as usual

$$[a^{(2)},\overline{a}^{(2)}]=\Cour{a_1,\overline{a}_1}\circ pr_1\oplus \Cour{a_2,\overline{a}_2}\circ pr_2,$$

\noindent and the anchor map $\rho_{(\mathbb{T}G)_{(2)}}:\mathbb{T}G\times \mathbb{T}G\rmap TG\times TG$ is given by the canonical componentwise projection.

\begin{proposition}\label{multiplicationLG}

Let $m_{\mathbb{T}}:(\mathbb{T}G)_{(2)}\rmap \mathbb{T}G$ denote the groupoid multiplication of $\mathbb{T}G=TG\oplus T^*G$. If $a,b,a_i,b_i\in \Gamma(\mathbb{T}G), i=1,2$ are sections such that 

$$m_{\mathbb{T}}\circ (a_1,a_2)=a\circ m_G; \quad m_{\mathbb{T}}\circ(b_1,b_2)=b\circ m_G,$$

\noindent then the following identities hold

\begin{itemize}

\item[i)] $Tm_G(\rho_{(\mathbb{T}G)_{(2)}}(X^1_g\oplus \alpha^1_g, X^2_h\oplus \alpha^2_h))=X^1_g\bullet X^2_h;$

\item[ii)] $m_{\mathbb{T}}\circ (\Cour{a_1,b_1},\Cour{a_2,b_2})=\Cour{a,b}\circ m_G.$

\end{itemize}

\end{proposition}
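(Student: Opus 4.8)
The plan is to treat the two identities separately: i) is a bookkeeping statement, whereas ii) carries the genuine content, namely the multiplicativity of the Courant bracket that makes $\mathbb{T}G$ a $\mathcal{CA}$-groupoid. For i), I would simply unwind the definitions. By construction the anchor $\rho_{(\mathbb{T}G)_{(2)}}$ of the product Courant algebroid $\mathbb{T}G\times\mathbb{T}G$ is the componentwise projection onto $TG\times TG$, so it sends $(X^1_g\oplus\alpha^1_g,X^2_h\oplus\alpha^2_h)$ to the composable tangent pair $(X^1_g,X^2_h)\in(TG)_{(2)}=T(G_{(2)})$. Applying $Tm_G$ and recalling from Section \ref{tangentgroupoids} that the tangent groupoid multiplication is by definition $X^1_g\bullet X^2_h:=Tm_G(X^1_g,X^2_h)$, identity i) follows at once.

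For ii) the strategy is to split the Courant bracket into its $TG$-component, the Lie bracket $[X,Y]$, and its $T^*G$-component, the combination $\Lie_X\beta-i_Y d\alpha$, and to reformulate the hypothesis so that each component can be checked against concrete data. Writing $a_i=X_i\oplus\alpha_i$, $b_i=Y_i\oplus\beta_i$, $a=X\oplus\alpha$, $b=Y\oplus\beta$, I would first use the explicit multiplication formulas of Section \ref{tangentgroupoids} to show that $m_{\mathbb{T}}\circ(a_1,a_2)=a\circ m_G$ is equivalent to two conditions on $G_{(2)}$: the product vector field $X_1\oplus X_2$ is tangent to $G_{(2)}$ and $m_G$-related to $X$; and the one-forms satisfy the pullback identity $m_G^*\alpha=pr_1^*\alpha_1+pr_2^*\alpha_2$, where $pr_1,pr_2:G_{(2)}\rmap G$ are the projections (and likewise for $b$). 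The second equivalence is exactly the defining pairing $(\alpha_g\circ\beta_h)(X_g\bullet Y_h)=\alpha_g(X_g)+\beta_h(Y_h)$ of the cotangent groupoid, read off against arbitrary composable tangent pairs and using that $Tm_G$ is fibrewise surjective.

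With these reformulations in hand, the $TG$-component of ii) reduces to the standard fact that $m_G$-relatedness of vector fields is preserved under the Lie bracket: since $X_1\oplus X_2$ and $Y_1\oplus Y_2$ are tangent to $G_{(2)}$ and $m_G$-related to $X$ and $Y$, their bracket $[X_1,Y_1]\oplus[X_2,Y_2]$ is again tangent to $G_{(2)}$ and $m_G$-related to $[X,Y]$. For the $T^*G$-component I would run Cartan calculus through $m_G^*$, using $m_G^*d=d\,m_G^*$, the identity $m_G^*(i_Z\eta)=i_{\mathcal Z}(m_G^*\eta)$ valid whenever $\mathcal Z$ is $m_G$-related to $Z$, the analogous identities for the projections (note that $X_1\oplus X_2$ is $pr_i$-related to $X_i$, so contractions may be pushed through $pr_i^*$), and Cartan's formula $\Lie_X=d\,i_X+i_X d$. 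Feeding in the two pullback identities and the relatedness then yields $m_G^*(\Lie_X\beta-i_Y d\alpha)=pr_1^*(\Lie_{X_1}\beta_1-i_{Y_1}d\alpha_1)+pr_2^*(\Lie_{X_2}\beta_2-i_{Y_2}d\alpha_2)$, which by the same cotangent reformulation says precisely that the $T^*G$-parts of $\Cour{a_1,b_1}$ and $\Cour{a_2,b_2}$ multiply to the $T^*G$-part of $\Cour{a,b}$. Together with the tangent component, this establishes ii).

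The main obstacle is the one-form component. The tangent part is routine once relatedness is set up, but the cotangent part requires two careful moves: converting the cotangent-groupoid multiplication into the clean pullback identity $m_G^*\alpha=pr_1^*\alpha_1+pr_2^*\alpha_2$, so that differential-form calculus becomes available, and then checking not merely the value formula but that the outputs $\Cour{a_1,b_1}$ and $\Cour{a_2,b_2}$ are genuinely composable in $\mathbb{T}G$, so that $m_{\mathbb{T}}\circ(\Cour{a_1,b_1},\Cour{a_2,b_2})$ is defined in the first place. I expect composability to follow from the tangent relatedness together with the compatibility of the cotangent source and target maps, but this is the delicate point that should be verified rather than assumed.
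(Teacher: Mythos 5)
Your proposal is correct and follows essentially the same route as the paper: identity i) by unwinding the definitions of the componentwise anchor and the tangent multiplication, and identity ii) by splitting into the tangent component (preservation of $m_G$-relatedness under the Lie bracket) and the cotangent component, which the paper likewise handles by recasting the cotangent-groupoid multiplication as the pullback identity $m_G^*\beta=pr_1^*\alpha^1+pr_2^*\alpha^2$ and pushing the Lie derivatives and exterior derivatives through $m_G^*$. Your remark about verifying composability of $\Cour{a_1,b_1}$ and $\Cour{a_2,b_2}$ is a point the paper leaves implicit, but it follows from the relatedness conditions exactly as you anticipate.
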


\begin{proof}
 
We begin by checking the identity i). For that, consider a section $a^{(2)}=a_1\circ pr_1 \oplus a_2\circ pr_2$ of $(\mathbb{T}G)_{(2)}$ where $a_1=X^1\oplus \alpha^1$ and $a_2=X^2\oplus \alpha^2$ are sections of $\mathbb{T}G$. The multiplication on the Lie groupoid $\mathbb{T}G$ maps the section $a^{(2)}$ into

$$m_{\mathbb{T}}(a_1\circ pr_1\oplus a_2\circ pr_2)(g,h)=X^1_g\bullet X^2_h \oplus \alpha^1_g\circ \alpha^2_h.$$

\noindent Applying the anchor map of $\mathbb{T}G$ we obtain

$$\rho_{\mathbb{T}G}(X^1_g\bullet X^2_h \oplus \alpha^1_g\circ \alpha^2_h)=X^1_g\bullet X^2_h.$$

\noindent On the other hand, the componentwise anchor map of $(\mathbb{T}G)_{(2)}$ applied to the section $a^{(2)}$ gives rise to

$$\rho_{(\mathbb{T}G)_{(2)}}(a_1\circ pr_1\oplus a_2\circ pr_2)(g,h)=(X^1_g,X^2_h),$$

\noindent which followed by the derivative of $m_G:G_{(2)}\rmap G$ yields

$$Tm_G(\rho_{(\mathbb{T}G)_{(2)}}(X^1_g\oplus \alpha^1_g, X^2_h\oplus \alpha^2_h))=X^1_g\bullet X^2_h,$$

\noindent as required. In order to prove identitiy ii), one considers

\begin{align}\label{mrelated1}
m_{\mathbb{T}}\circ a^{(2)}=&a\circ m_G\\
m_{\mathbb{T}}\circ \overline{a}^{(2)}=&\overline{a}\circ m_G\label{mrelated2},
\end{align}

\noindent where $a^{(2)},\overline{a}^{(2)}\in \Gamma_{G_{(2)}}((\mathbb{T}G)_{(2)})$ and $a,\overline{a}\in \Gamma_{G}(\mathbb{T}G)$. More concretely, write down sections as

\begin{align*}
 a^{(2)}=&(X^1\oplus \alpha^1)\circ pr_1 \oplus (X^2\oplus \alpha^2)\circ pr_2\\ 
 \overline{a}^{(2)}=&(\overline{X}^1\oplus \overline{\alpha}^1)\circ pr_1 \oplus (\overline{X}^2\oplus \overline{\alpha}^2)\circ pr_2\\
a=& Y\oplus \beta\\
\overline{a}=&\overline{Y}\oplus \overline{\beta},
\end{align*}

\noindent then the identities (\ref{mrelated1}), (\ref{mrelated2}) become

\begin{align}\label{mrelated1prima}
 X^1_{g}\bullet X^2_{h}\oplus \alpha^1_{g}\circ \alpha^2_{h}=& Y_{gh}\oplus \beta_{gh}\\
 \overline{X}^1_{g}\bullet \overline{X}^2_{h}\oplus \overline{\alpha}^1_{g}\circ \overline{\alpha}^2_{h}=& \overline{Y}_{gh}\oplus \overline{\beta}_{gh},\label{mrelated2prima}
\end{align}

\noindent for any composable pair $(g,h)\in G\times G$. Now it follows directly from the definition of the direct product bracket that

$$[a^{(2)},\overline{a}^{(2)}]=([X^1,\overline{X}^1]\oplus \Lie_{X^1}\overline{\alpha}^1-i_{\overline{X}^1}d\alpha^1)\circ pr_1\oplus ([X^2,\overline{X}^2]\oplus \Lie_{X^2}\overline{\alpha}^2-i_{\overline{X}^2}d\alpha^2)\circ pr_2.$$ 

\noindent Then, composing with the groupoid multiplication of $\mathbb{T}G$, we have

\begin{align*}
 m_{\mathbb{T}}\circ [a^{(2)},\overline{a}^{(2)}]_{(g,h)}=[X^1,\overline{X}^1]_{g}\bullet[X^2,\overline{X}^2]_{h}\oplus (\Lie_{X^1}\overline{\alpha}^1-i_{\overline{X}^1}d\alpha^1)_{g}\circ(\Lie_{X^2}\overline{\alpha}^2-i_{\overline{X}^2}d\alpha^2)_{h}.
\end{align*}

\noindent On the other hand, 

$$\Cour{a,\overline{a}}\circ m_G(g,h)=[Y,\overline{Y}]_{gh}\oplus (\Lie_{Y}\overline{\beta}-i_{\overline{Y}}d\beta)_{gh},$$

\noindent and using the identities (\ref{mrelated1prima}) and (\ref{mrelated2prima}) one concludes that

$$[Y,\overline{Y}]_{gh}=[X^1,\overline{X}^1]_{g}\bullet [X^2,\overline{X}^2]_{h}.$$

\noindent Thus, the tangent component of $\Cour{a,\overline{a}}_{gh}$ coincides with the tangent component of $m_{\mathbb{T}}\circ[a^{(2)},\overline{a}^{(2)}]_{(g,h)}$. It remains to show that we also have the equality of the corresponding cotangent parts. This is equivalent to showing that

\begin{align*}
 (\Lie_{Y}\overline{\beta}-\Lie_{\overline{Y}}\beta - d\langle \beta,\overline{Y}\rangle)_{gh}=&(\Lie_{X^1}\overline{\alpha}^1-\Lie_{\overline{X}^1}\alpha^1-d\langle \alpha^1,\overline{X}^1\rangle)_{g}\circ\\ \circ&(\Lie_{X^2}\overline{\alpha}^2-\Lie_{\overline{X}^2}\alpha^2-d\langle \alpha^2,\overline{X}^2\rangle)_{h},
\end{align*}

\noindent for every composable pair $(g,h)\in G_{(2)}$. In order to prove this identity, we need to check that the left hand side ($LHS$), and the right hand side ($RHS$) above coincide at elements of the form $U_g\bullet V_h$. For that consider the $1$-form on $G$ defined by $\gamma:=\Lie_{Y}\overline{\beta}-\Lie_{\overline{Y}}\beta-d\langle\beta,\overline{Y}\rangle.$ We can look at the pull back $1$-form $m^*_{G}\gamma\in\Omega^{1}(G_{(2)})$, which at every tangent vector $(U_g,V_h)\in T_{(g,h)}G_{(2)}$ is given by

$$(m^*_{G}\gamma)_{(g,h)}(U_g,V_h)=\gamma_{gh}(U_g\bullet V_h)=(LHS)(U_g\bullet V_h).$$

The pull back form $m^*_{G}\gamma$ involves three terms. Let us analyze the first term $m^*_{G}(\Lie_{Y}\overline{\beta})$ of this pull back form. It follows from the relation $Y=(m_G)_{*}(X^1,X^2)$ that

$$m^*_{G}(\Lie_{Y}\overline{\beta})=\Lie_{(X^1,X^2)}m^*_{G}\overline{\beta}.$$

\noindent Notice that (\ref{mrelated2prima}) implies that

\begin{align*}
 (m^*_{G}\overline{\beta})_{(g,h)}(U_g,V_H)=&\overline{\beta}_{gh}(U_g\bullet V_h)\\
=&(\overline{\alpha}^1_{g}\circ \overline{\alpha}^2_{h})(U_g\bullet V_h)\\
=&\overline{\alpha}^1_{g}(U_g) + \overline{\alpha}^2_{h}(V_h)\\
=&(\overline{\alpha}^1,\overline{\alpha}^2)_{(g,h)}(U_g,V_h).
\end{align*}

\noindent That is, $m^*_{G}(\Lie_{Y}\overline{\beta})=\Lie_{X^1}\overline{\alpha}^1\oplus \Lie_{X^2}\overline{\alpha}^2$. A similar argument can be applied to the other terms of the pull back form $m^*_{G}\gamma$, yielding

\begin{align*}
(LHS)(U_g\bullet V_h)=&(m^*_{G}\gamma)_{(g,h)}(U_g,V_h)\\
=&(\Lie_{X^1}\overline{\alpha}^1)_{g}(U_g) + (\Lie_{X^2}\overline{\alpha}^2)_{h}(V_h)+\\
-&(\Lie_{\overline{X}^1}\alpha^1)_{g}(U_g) - (\Lie_{\overline{X}^2}\alpha^2)_{h}(V_h)+\\
-& d\langle\alpha^1,\overline{X}^1\rangle_g(U_g) - d\langle\alpha^2,\overline{X}^2\rangle_h(V_h)\\ 
=&(RHS)(U_g\bullet V_h).
\end{align*}

Thus $RHS$ and $LHS$ coincide at elements of the form $U_g\bullet V_h$, and we conclude that $(m_{\mathbb{T}},m_G)$ is bracket preserving.

\end{proof}

Recall that, given a Courant algebroid $(\mathbb{E},\rho,\Cour{\cdot,\cdot})$ over smooth manifold $M$ and a submanifold $Q\subseteq M$, a \textbf{Dirac structure supported} on $Q$ (see \cite{AX,BIS}) is a subbundle $K\subset \mathbb{E}|_{Q}$ such that $K_x\subseteq \mathbb{E}_x$ is Lagrangian for all $x\in Q$ and the following conditions are fulfilled:

\begin{enumerate}

\item $\rho(K)\subseteq TQ$;

\item whenever $a_1,a_2\in\Gamma(\mathbb{E})$ satisfy $a_1|_{Q},a_2|_{Q}\in \Gamma(K)$, then $\Cour{a_1,a_2}|_{Q}\in\Gamma(K)$.

\end{enumerate}

Dirac structures with support were used in \cite{BIS} to introduce a natural notion of morphism between Courant algebroids. Let $\mathbb{E}_1,\mathbb{E}_2$ Courant algebroids over $M,N$, respectively. A \textbf{Courant algebroid morphism} from $\mathbb{E}_1$ to $\mathbb{E}_2$  is a Dirac structure in $\mathbb{E}_2\times \overline{\mathbb{E}_1}$ supported on $\mathrm{graph}(f)$ where $f:M\rmap N$ is a smooth map. Here $\mathbb{E}_1$ denotes the Courant algebroid structure on the vector bundle $\mathbb{E}_1$ with the same bracket on $\Gamma(\mathbb{E}_1)$, anchor map and minus the usual symmetric pairing.

Combing Proposition \ref{multiplicativepairing} and Proposition \ref{multiplicationLG}, we obtain the following.

\begin{proposition}\label{canonicalCAgroupoid}

Let $G$ be a Lie groupoid over $M$ with multiplication map $m_G:G_{(2)}\rmap G$. Let $m_{\mathbb{T}}:(\mathbb{T}G)_{(2)}\rmap \mathbb{T}G$ denote the groupoid multiplication on $\mathbb{T}G$. Then $\mathrm{graph}(m_{\mathbb{T}})\subseteq \mathbb{T}G\times \overline{\mathbb{T}G\times \mathbb{T}G}$ is a Dirac structure supported on $\mathrm{graph}(m_G)\subseteq G\times G\times G$. That is, $\mathrm{graph}(m_{\mathbb{T}})$ is a Courant algebroid morphism from $\mathbb{T}G\times \mathbb{T}G$ to $\mathbb{T}G.$

\end{proposition}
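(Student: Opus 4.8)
The plan is to check directly that $K:=\mathrm{graph}(m_{\mathbb{T}})$ is a Dirac structure supported on $Q:=\mathrm{graph}(m_G)$ inside the product Courant algebroid $\mathbb{E}:=\mathbb{T}G\times\overline{\mathbb{T}G\times\mathbb{T}G}$ over $G\times G\times G$; by the definition of Courant algebroid morphism recalled just above, this is precisely the final assertion. As a preliminary observation, $m_{\mathbb{T}}$ is fibrewise linear, being the multiplication of the $\mathcal{VB}$-groupoid $\mathbb{T}G$, so $(u,v)\mapsto(m_{\mathbb{T}}(u,v),u,v)$ is an injective vector bundle morphism from the composable pairs $(\mathbb{T}G)_{(2)}$ into $\mathbb{E}|_Q$ over the identification $Q\cong G_{(2)}$; hence $K$ is a smooth vector subbundle of $\mathbb{E}|_{Q}$.

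For isotropy, recall that the symmetric pairing on $\mathbb{E}$ is
$$\langle(c,a,b),(c',a',b')\rangle_{\mathbb{E}}=\langle c,c'\rangle_G-\langle a,a'\rangle_G-\langle b,b'\rangle_G ,$$
so evaluating it on two elements $(m_{\mathbb{T}}(u,v),u,v)$ and $(m_{\mathbb{T}}(u',v'),u',v')$ of $K$ and substituting the multiplicativity of the pairing from Proposition \ref{multiplicativepairing}, namely $\langle m_{\mathbb{T}}(u,v),m_{\mathbb{T}}(u',v')\rangle_G=\langle u,u'\rangle_G+\langle v,v'\rangle_G$, all terms cancel and $K$ is isotropic. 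To promote this to Lagrangian I would count ranks. Writing $d=\dim G$ and $k=\dim M$, the fibre of $K$ over $(gh,g,h)$ is isomorphic to the fibre of $(\mathbb{T}G)_{(2)}$ over $(g,h)$, which is cut out inside $\mathbb{T}_gG\oplus\mathbb{T}_hG$ (of dimension $4d$) by the $k$ conditions $Ts(X_u)=Tt(X_v)$ on the tangent parts and the $d-k$ conditions $\tilde{s}(\alpha_u)=\tilde{t}(\alpha_v)$ on the cotangent parts; thus $\mathrm{rk}\,K=4d-k-(d-k)=3d$. Since $\mathbb{E}|_Q$ has rank $6d$ and carries a pairing of split signature, an isotropic subbundle of rank $3d$ is automatically maximal isotropic.

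The anchor condition $\rho_{\mathbb{E}}(K)\subseteq TQ$ is a direct consequence of Proposition \ref{multiplicationLG}(i). The product anchor sends $(m_{\mathbb{T}}(u,v),u,v)$ to the triple of its tangent components, whose first entry is the tangent part of $m_{\mathbb{T}}(u,v)$; part (i) identifies this with $Tm_G(X_u,X_v)$, while the other two entries are $X_u$ and $X_v$. Since $(X_u,X_v)\in T_{(g,h)}G_{(2)}$, the triple lies in $T_{(gh,g,h)}Q=\{(Tm_G(U,V),U,V):(U,V)\in T_{(g,h)}G_{(2)}\}$, as required.

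The remaining, and main, point is the involutivity condition. I would first note that a decomposable section $(a\circ pr_1,\,a_1\circ pr_2,\,a_2\circ pr_3)$ of $\mathbb{E}$, with $a,a_1,a_2\in\Gamma(\mathbb{T}G)$, restricts to a section of $K$ along $Q$ exactly when $m_{\mathbb{T}}\circ(a_1,a_2)=a\circ m_G$, which is precisely the hypothesis of Proposition \ref{multiplicationLG}. For two such triples the (componentwise) product Courant bracket on $\mathbb{E}$ equals $(\Cour{a,b}_G\circ pr_1,\,\Cour{a_1,b_1}_G\circ pr_2,\,\Cour{a_2,b_2}_G\circ pr_3)$, and Proposition \ref{multiplicationLG}(ii) asserts exactly that $m_{\mathbb{T}}\circ(\Cour{a_1,b_1},\Cour{a_2,b_2})=\Cour{a,b}\circ m_G$, so this bracket again restricts into $\Gamma(K)$. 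What makes this the hard part is twofold: one must verify that sections of this form span $K$ near every point of $Q$, which I would obtain from the fact that the source and target maps of $\mathbb{T}G$ and the multiplication $m_G$ are surjective submersions, so that composable $a_1,a_2$ realizing a prescribed element of $K$ together with a compatible $a$ can be chosen locally; and one must pass from such a spanning family to arbitrary sections restricting into $K$. The latter is the standard extension argument for Dirac structures with support: the Leibniz rule writes a general bracket as a $C^\infty$-combination of brackets of generators plus anchor-derivative terms and a term proportional to $\langle\cdot,\cdot\rangle\,df$, all of which restrict into $K$ because $\rho(K)\subseteq TQ$ and $K$ is isotropic, both already established. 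Assembling the four conditions gives the claim, and by the definition of Courant morphism this is the asserted morphism from $\mathbb{T}G\times\mathbb{T}G$ to $\mathbb{T}G$.
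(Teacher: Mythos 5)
Your proof is correct and follows essentially the same route as the paper, which simply asserts the result by combining Propositions \ref{multiplicativepairing} and \ref{multiplicationLG} without spelling out the verification. You supply the routine details the paper leaves implicit (the rank count showing the isotropic subbundle is Lagrangian, the identification of the anchor condition with part (i), and the standard spanning-plus-Leibniz extension argument for involutivity), all of which are accurate.
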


Using the terminology of \cite{LiblandSevera}, Proposition \ref{canonicalCAgroupoid} says that $\mathbb{T}G$ with its canonical Courant algebroid structure and groupoid multiplication is an example of $\mathcal{CA}$-groupoid.

%%%%%%%%%%%%%%%%%%%%%%%%%%%%%%%%%%%%%%%%%%%%%%%%%%%%%%%%%%%%%%%%%%%%%%%%%%%%%%%%%%%%%%%%%%%%%%%%%%%%%%%%%%%%%%%%%%%%%%%%%%%%%%%%%%%%%%%%%%%%%%%%%%%%%%%%%%%%%%%

\subsection{The $\mathcal{LA}$-groupoid of a multiplicative Dirac structure}

\subsubsection{Review of $\mathcal{LA}$-groupoids}

An $\mathcal{LA}$-groupoid is a Lie groupoid object in the category of Lie algebroids. More precisely, an \textbf{$\mathcal{LA}$-groupoid} \cite{MacdoubleI} is a square 

\begin{align}\label{LAgroupoid}
\xy 
(-15,10)*+{H}="t0"; (-15,-10)*+{E}="b0"; 
(15,10)*+{G}="t1"; (15,-10)*+{M}="b1"; 
{\ar@<.25ex>^{q_{H}} "t0"; "t1"}; 
%{\ar@<-.5ex>_{t} "t0"; "t1"}; 
{\ar@<.25ex>_{q_E} "b0"; "b1"}; 
%{\ar@<-.5ex>_{ t} "b0"; "b1"}; 
{\ar@<.5ex> "t0"; "b0"}; 
{\ar@<-.5ex> "t0"; "b0"}; 
{\ar@<.5ex> "t1"; "b1"}; 
{\ar@<-.5ex> "t1"; "b1"}; 
\endxy
\end{align}

\noindent where the single arrows denote Lie algebroids and the double arrows denote Lie groupoids. These structures are compatible in the sense that all the structure mappings (i.e. source, target, unit section, inversion and multiplication) defining the Lie groupoid $H$ are Lie algebroid morphisms over the corresponding structure mappings which define the Lie groupoid $G$. We also require that the anchor map $\rho_{H}:H\rmap TG$ be a groupoid morphism over the anchor map $\rho_E:E\rmap TM$. Here $TG$ is endowed with the tangent groupoid structure over $TM$. For describing the square given by an $\mathcal{LA}$-groupoid we use the notation $(H,G,E,M)$. It is worthwhile to explain how the groupoid multiplication defines a morphism of Lie algebroids. For that, let $m_{H}: H_{(2)}\subseteq H\times H\rmap H$ denote the groupoid multiplication of $H$, and similarly let $m_G:G_{(2)}\subseteq G\times G\rmap G$ denote the multiplication of $G$. The direct product vector bundle $H\times H\rmap G\times G$ inherits a natural Lie algebroid structure, and we have a Lie subalgebroid $H_{(2)}$ over $G_{(2)}$ which is just a pull back algebroid, see e.g. \cite{HM} for details about the pull back operation in the category of Lie algebroids. With respect to this Lie algebroid structure, the multiplication map $m_{H}$ is required to be a Lie algebroid morphism covering $m_G$.

%\begin{example}
 
%Let $G$ be a Lie groupoid over $M$. The tangent functor leads to a canonical $\mathcal{LA}$-groupoid $(TG,G,TM,M)$, where the Lie groupoid structure on $TG$ is the tangent groupoid, and the Lie algebroid structure $TG\rmap G$ corresponds to the trivial Lie algebroid. See example \ref{trivialliealgebroid} in chapter $2$.

%\end{example}

The Lie functor applied to an $\mathcal{LA}$-groupoid (\ref{LAgroupoid}) determines a double vector bundle

\begin{align}\label{doubleliealgebroid}
\xy 
(-15,10)*+{AH}="t0"; (-15,-10)*+{E}="b0"; 
(15,10)*+{AG}="t1"; (15,-10)*+{M}="b1"; 
{\ar@<.25ex>^{A(q_{H})} "t0"; "t1"}; 
%{\ar@<-.5ex>_{t} "t0"; "t1"}; 
{\ar@<.25ex>_{q_E} "b0"; "b1"}; 
%{\ar@<-.5ex>_{ t} "b0"; "b1"}; 
{\ar@<.25ex> "t0"; "b0"}; 
%{\ar@<-.5ex> "t0"; "b0"}; 
{\ar@<.25ex> "t1"; "b1"}; 
%{\ar@<-.5ex> "t1"; "b1"}; 
\endxy
\end{align}

\noindent where each of the arrows define Lie algebroids. The top Lie algebroid structure is non trivial, and it deserves a detailed explanation. The Lie algebroid structure $AH\rmap AG$ was constructed in \cite{MacdoubleII} as a prolongation procedure similar to the tangent prolongation of a Lie algebroid, except that we replace the tangent functor by the Lie functor.

\begin{definition}

 The \textbf{prolonged anchor map} $AH\rmap T(AG)$ is defined by

$$\tilde{\rho}:=j^{-1}_{G}\circ A(\rho_{H}),$$

\noindent where $j_{G}:T(AG)\rmap A(TG)$ is the canonical identification defined in appendix A.

\end{definition}

Now we study the space of sections $\Gamma_{AG}(AH)$.

\begin{definition}

A section $u\in\Gamma_{G}(H)$ is called a \textbf{star section} if there exists a section $u_0\in\Gamma_{M}(E)$ such that

\begin{enumerate}
 \item $\epsilon_{E}\circ u_0=u\circ \epsilon_{M},$

 \item $s_{H}\circ u=u_0\circ s_{G}.$

\end{enumerate}

\end{definition}

Notice that since every star section $u:G\rmap H$ preserves the units and the source fibrations, we are allowed to apply the Lie functor to $u$, yielding a section $A(u)$ of the vector bundle $AH\stackrel{A(q_{H})}{\longrightarrow} AG$.

\begin{definition}
 
Let $(H,G,E,M)$ be an $\mathcal{LA}$-groupoid. The \textbf{core} of $H$ is the vector bundle over $M$ defined by

$$K:=\epsilon^{*}_{M}\Ker(s_{H}).$$

\end{definition}
 
%\begin{example}

%Let $G$ be a Lie groupoid and consider the canonical $\mathcal{LA}$-groupoid $(TG,G,TM,M)$. The core of $TG$ is nothing else that $K=AG$ the Lie algebroid of $G$. 

%\end{example}

Every section $k\in\Gamma(K)$ induces a section $k_{H}\in\Gamma_{G}(H)$ in the following way

$$k_{H}(g):= k(t_{G}(g))0^{H}_g,$$

\noindent where $0^{H}_g$ is the zero element in the fiber $H_g$ above $g\in G$. Notice that for every section $k\in\Gamma(K)$ the induced section $k_{H}\in\Gamma_{G}(H)$ satisfies 

$$k_{H}\circ\epsilon_M=k.$$

%\begin{example}
 %For the canonical $\mathcal{LA}$-groupoid $(TG,G,TM,M)$ a section $k$ of the core $K$ is just a section of the Lie algebroid $AG$. The induced section $k_{TG}\in\Gamma_{G}(TG)$ is exactly the right invariant vector field on $G$ determined by the section $k\in\Gamma(AG)$. Indeed, the right invariant vector field determined by $k\in\Gamma(AG)$ is defined, at every $g\in G$ with $t(g)=x$, by 

%$$Tr_g(k(x))=Tm_G(x,g)(k(x),0^{TG}_g),$$

%\noindent which is exactly the section $k_{TG}$.
%\end{example}

%The core $K$ of the $\mathcal{LA}$-groupoid $(\Omega,G,E,M)$ inherits a canonical Lie algebroid structure. Indeed, for a pair of sections $k_1,k_2\in\Gamma(K)$ we define the Lie bracket 

%$$[\overline{k}_1,\overline{k}_2]=\overline{[k_1,k_2]}.$$

It was proved in \cite{MacdoubleII} that the core of the double vector bundle $(AH,AG,E,M)$ is the vector bundle $K\rmap M$. Notice that a core element $k\in K$ induces a Lie algebroid element $\overline{k}\in AH$. Indeed, we observe that every element in $AH$ has the form

$$W=\frac{d}{dt}(h_t)\vert_{t=0},$$ 

\noindent where $h_t$ is a curve in $H$ sitting in a fixed source fiber $s^{-1}_{H}(e)$ with $h_0=\epsilon_E(e)$. Thus, for every core element $k\in K$ above $x\in M$, that is $s_{H}(k)=0^{E}_{x}$ and $q_{H}(k)=\epsilon_M(x)$, there exists a natural element $\overline{k}\in AH$, defined by

$$\overline{k}:=\frac{d}{dt}(tk)\vert_{t=0}.$$

\begin{definition}
Given a section $k\in\Gamma(K)$, the \textbf{core} section induced by $k$ is the section $k^{\rm{core}}\in\Gamma_{AG}(AH)$ defined by

$$k^{\rm{core}}(u_x):=A(0^{H})u_x+ \overline{k(x)}.$$ 

\end{definition}

The following proposition was proved in \cite{MacdoubleII}.

\begin{proposition}
 
The space of sections $\Gamma_{AG}(AH)$ is generated by sections of the form $A(u)$, where $u:G\rmap H$ is a star section, and by sections of the form $k^{\rm{core}}$, where $k:M\rmap K$ is a section of the core of $H$. 

\end{proposition}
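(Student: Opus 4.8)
The plan is to reduce the statement to the general theory of double vector bundles applied to the square $(AH,AG,E,M)$, whose core is the vector bundle $K\rmap M$. Recall that for an arbitrary double vector bundle $(D;A,B;M)$ with core $C$, the $C^\infty(A)$-module $\Gamma_A(D)$ of sections of the side projection $D\rmap A$ is generated by its \emph{linear} sections---those which are simultaneously vector bundle morphisms over a section of $B\rmap M$---together with its \emph{core} sections, one for each element of $\Gamma(C)$. This is a local statement: over a chart trivializing both $A\rmap M$ and $B\rmap M$ one has $D\cong A\times_M(B\oplus C)$, and an arbitrary section of $D\rmap A$ decomposes, fibrewise and smoothly, into a part valued in the $B$-summand and a part valued in the (pullback of the) core. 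I would first record this principle and then identify the two distinguished classes of sections in our situation.

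For the core sections I would simply match definitions. Since the zero section $0^{H}:G\rmap H$ is a groupoid morphism, the Lie functor produces the zero section $A(0^{H}):AG\rmap AH$ of $A(q_H)$, and for $k\in\Gamma(K)$ the element $\overline{k(x)}\in AH$ is, by construction, the core element attached to $k(x)$. Thus $k^{\mathrm{core}}(u_x)=A(0^{H})u_x+_{AG}\overline{k(x)}$ is exactly the core section of the double vector bundle $(AH,AG,E,M)$ associated to $k$, using the identification of its core with $K$ established in \cite{MacdoubleII}. As $k$ ranges over $\Gamma(K)$, the sections $k^{\mathrm{core}}$ span, at every $u_x\in AG$, the core summand $K_x$ of the fibre $AH_{u_x}$.

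For the linear sections the key point is that star sections of $H$ are precisely the sections to which the Lie functor can be applied, and that their images are linear. Given a star section $u$ with base $u_0\in\Gamma_M(E)$, the identities $q_H\circ u=\mathrm{Id}_G$, $s_H\circ u=u_0\circ s_G$ and $\epsilon_E\circ u_0=u\circ\epsilon_M$ are preserved by $A$, so that $A(q_H)\circ A(u)=\mathrm{Id}_{AG}$ and $A(u)$ is a vector bundle morphism $AG\rmap AH$ over $u_0$ for the side structures $AG\rmap M$ and $AH\rmap E$; hence $A(u)$ is a linear section of $A(q_H)$. Conversely, to realise every linear direction I would show that star sections are abundant: given $x\in M$ and $e\in E_x$, there is a local star section $u$ with $u_0(x)=e$. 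Over a chart trivializing the $\mathcal{LA}$-groupoid this is an explicit construction, the source and unit conditions being linear constraints that can be solved along a local frame $e_1,\dots,e_r$ of $E$. The resulting sections $A(u^{(i)})$ then span, at every point of $AG$ over the chart, the $E$-summand of $AH_{u_x}$.

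Combining the two computations, at each $u_x\in AG$ the fibre $AH_{u_x}\cong E_x\oplus K_x$ is spanned by values of finitely many local sections of the forms $A(u)$ and $k^{\mathrm{core}}$; since a family of smooth sections of a vector bundle that spans every fibre generates the full module of sections over the base (by local triviality and a partition of unity on $AG$), the sections $A(u)$ and $k^{\mathrm{core}}$ generate $\Gamma_{AG}(AH)$ as a $C^\infty(AG)$-module. The main obstacle I anticipate is the linear part: one must verify carefully that the Lie functor sends star sections to genuine linear sections of the prolonged structure $AH\rmap AG$, and that enough star sections exist locally to exhaust the $E$-summand of each fibre---the compatibility of $A$ with the two vector bundle structures and with the source and unit maps is exactly what makes this work, whereas the core contribution and the final partition-of-unity argument are routine.
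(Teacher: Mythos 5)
The paper does not actually prove this proposition --- it is quoted from \cite{MacdoubleII} --- so there is no internal proof to compare against; your argument is the standard one underlying that reference (decomposition of a double vector bundle into linear plus core sections, applied to $(AH,AG,E,M)$ with core $K$) and it is correct. The only point that genuinely needs care is the existence of a star section covering any prescribed $u_0\in\Gamma_M(E)$, which you treat somewhat tersely: it follows because $s_H:H\rmap E$ is a fibrewise-surjective vector bundle morphism over $s_G$, so the constraint $s_H\circ u=u_0\circ s_G$ cuts out an affine subbundle of $H\rmap G$ modelled on $\Ker(s_H)$, which admits global sections, and the unit condition $u\circ\epsilon_M=\epsilon_E\circ u_0$ is then arranged by subtracting an extension of the $K$-valued discrepancy along $\epsilon_M(M)$; with that settled, your fibrewise-spanning plus partition-of-unity argument closes the proof.
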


The Lie bracket on $\Gamma_{AG}(AH)$ is defined in terms of star sections and core sections. First we observe that whenever $u,v\in\Gamma_G(H)$ are star sections, then the Lie bracket $[u,v]\in\Gamma_{G}(H)$ is also a star section. Thus the Lie bracket between sections of the form $A(u),A(v)$ is defined by

$$[A(u),A(v)]=A([u,v]).$$

The bracket of a pair of core sections is defined by 

$$[k^{\rm{core}}_1,k^{\rm{core}}_2]=0.$$

In order to define the bracket of a star section and a core section we notice that every star section $u:G\rmap H$ induces a covariant differential operator 
\begin{align*}
D_u:\Gamma(K&)\rmap \Gamma(K)\\
&k\mapsto [u,k_{H}]\circ \epsilon_M,
\end{align*}

\noindent now we define $[A(u),k^{\rm{core}}]=(D_u(k))^{\rm{core}}.$

The Lie bracket of other sections of $\Gamma_{AG}(AH)$ is defined by requiring the Leibniz rule

$$[w,fw']=f[w,w']+(\Lie_{\tilde{\rho}(w)}f)w'.$$

The vector bundle $AH\stackrel{A(q_{H})}{\longrightarrow}AG$ endowed with the anchor map $\tilde{\rho}=j^{-1}_G\circ A(\rho)$ and the Lie bracket $[\cdot,\cdot]$ on $\Gamma_{AG}(AH)$ becomes a Lie algebroid called the \textbf{prolonged Lie algebroid} induced by $H\rmap G$, see \cite{MacdoubleII}.

Although the following remark is not mentioned in \cite{MacdoubleII}, it is important to notice that Mackenzie's construction of the prolonged Lie algebroid is natural in the following sense.

\begin{proposition}\label{natural}

Let $(H,G,E,M)$ be an $\mathcal{LA}$-groupoid. Consider the canonical embeddings $i_{AH}:AH\rmap TH$ and $i_{AG}: AG\rmap TG$. Endow $TH\rmap TG$ with the tangent algebroid structure and $AG\rmap AH$ with the prolonged algebroid structure. Then $i_{AH}$ is a Lie algebroid morphism covering $i_{AG}$. 

\end{proposition}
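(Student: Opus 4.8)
The plan is to verify directly that $i_{AH}$ satisfies the two defining conditions of a Lie algebroid morphism over the base map $i_{AG}$: compatibility with the anchors and compatibility with the brackets, working throughout with the generating sections of the tangent and prolonged algebroids. \textbf{Anchors.} First I would establish the anchor identity $\rho_{TH}\circ i_{AH}=T(i_{AG})\circ\tilde{\rho}$, where $\rho_{TH}=J_G\circ T\rho_H$ is the tangent anchor of $TH\rmap TG$ and $\tilde{\rho}=j^{-1}_G\circ A(\rho_H)$ is the prolonged anchor. Applying the commutative square (\ref{ATG}), which reads $J_G\circ T(i_{AG})=i_{A(TG)}\circ j_G$, and using that $J_G$ is an involution, the right-hand side becomes $J_G\circ i_{A(TG)}\circ A(\rho_H)$. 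Cancelling $J_G$, the identity reduces to
$$T\rho_H\circ i_{AH}=i_{A(TG)}\circ A(\rho_H),$$
which is exactly the naturality of the canonical embedding $i$ with respect to the Lie groupoid morphism $\rho_H:H\rmap TG$ (a morphism over $\rho_E:E\rmap TM$ by the $\mathcal{LA}$-groupoid axioms). This disposes of the anchor compatibility at once.

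\textbf{Brackets on generators.} For the bracket I would invoke the relatedness criterion for Lie algebroid morphisms (see \cite{HM}): since $i_{AG}$ is an embedding, it suffices to exhibit, for the generating sections of $AH\rmap AG$, genuine sections of $TH\rmap TG$ to which they are $i_{AH}$-related, and to check that the brackets of the generators remain related. By the description of $\Gamma_{AG}(AH)$ recalled above, it is enough to treat star sections $A(u)$ and core sections $k^{\rm{core}}$. The crux is that $i_{AH}$ intertwines these with the generators $Tw,\widehat{w}$ of the tangent algebroid, namely
$$i_{AH}\circ A(u)=(Tu)\circ i_{AG},\qquad i_{AH}\circ k^{\rm{core}}=(\widehat{k_H})\circ i_{AG}.$$
The first identity holds because $A(u)$ is by definition the restriction of $Tu$ to $AG\subseteq TG$, while $Tu$ is a linear section of $TH\rmap TG$ as $q_H\circ u=\id$; the second follows by expanding $k^{\rm{core}}(u_x)=A(0^H)u_x+\overline{k(x)}$, applying the first identity to the star section $0^H$, and matching the groupoid core $\overline{k(x)}\in AH$ with the fibrewise-scaling core in $\widehat{k_H}(X)=T(0^H)(X)+_H\overline{k_H(p_G(X))}$.

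\textbf{Matching brackets and conclusion.} It then remains to match the three structural brackets. The relation $[A(u),A(v)]=A([u,v])$ is $i_{AH}$-related to $[Tu,Tv]=T[u,v]$ because $[u,v]$ is again a star section and the first identity applies to it; the relation $[k^{\rm{core}}_1,k^{\rm{core}}_2]=0$ is trivially related to $[\widehat{k_{1,H}},\widehat{k_{2,H}}]=0$; and the mixed relation $[A(u),k^{\rm{core}}]=(D_uk)^{\rm{core}}$ must be related to $[Tu,\widehat{k_H}]=\widehat{[u,k_H]}$, which amounts to the identity $[u,k_H]=(D_uk)_H$ in $\Gamma_G(H)$, i.e. that the bracket of a star section with a core-induced section is itself induced by $D_uk=[u,k_H]\circ\epsilon_M$. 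Combining these with the anchor compatibility proves, via \cite{HM}, that $i_{AH}$ is a Lie algebroid morphism covering $i_{AG}$.

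The main obstacle is the second relatedness identity together with the mixed-bracket identity $[u,k_H]=(D_uk)_H$. Both require a careful analysis of the double vector bundle $(AH;AG,E;M)$ and of the two distinct core operations at play: the groupoid core $\overline{(\cdot)}$ that defines $AH$ as the Lie algebroid of $H\rightrightarrows E$, and the fibrewise-scaling core $\widehat{(\cdot)}$ of the tangent prolongation. Pinning down the identification of these cores under $i_{AH}$, and verifying that $[u,k_H]$ is genuinely core-induced, is the computational heart of the argument; by contrast the anchor statement and the star-section parts are formal consequences of diagram (\ref{ATG}) and the tautological naturality of the embedding $i$.
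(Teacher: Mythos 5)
Your proposal is correct and follows essentially the same route as the paper: the anchor compatibility reduces to the definition of the prolonged anchor (your unwinding via diagram (\ref{ATG}) is just a more explicit version of the paper's one-line remark), and the bracket compatibility is checked on generators via the two relatedness identities $i_{AH}\circ A(u)=Tu\circ i_{AG}$ and $i_{AH}\circ k^{\mathrm{core}}=\widehat{k}_H\circ i_{AG}$, followed by matching the three structural brackets, exactly as in the paper. The one place where you make the task harder than necessary is the mixed bracket: you do not need the full identity $[u,k_H]=(D_uk)_H$ in $\Gamma_G(H)$, because a core section $\widehat{w}$ of $TH\rmap TG$ composed with $i_{AG}$ only sees the values $w\circ\epsilon_M$, so the definitional identity $[u,k_H]\circ\epsilon_M=D_uk$ already yields $\widehat{[u,k_H]}\circ i_{AG}=\widehat{(D_uk)_H}\circ i_{AG}$, which is precisely how the paper closes this case.
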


Recall that (see e.g. \cite{M}) a vector bundle map $\Psi:A\rmap B$, covering $\psi:M\rmap N$, is a \textbf{Lie algebroid morphism} if  $$\rho_B \circ \Psi = T\psi \circ \rho_A,$$ and the following compatibility with brackets holds: for sections $u,v\in \Gamma(A)$ such that $\Psi(u)=\sum_{j}f_j\psi^*u_j$ and $\Psi(v)=\sum_{i}g_i\psi^*v_i$, where $f_j,g_i \in C^\infty(M)$ and $u_j,v_i \in \Gamma(B)$, we have

\begin{equation}\label{brackets}
\Psi([u,v]_{\st{A}})=\sum_{i,j}f_jg_i\psi^*[u_j,v_i]_{\st{B}} +
\sum_{i}\Lie_{\rho_A(u)}g_i \psi^*v_i - \sum_{j}\Lie_{\rho_A(v)}f_j \psi^*u_j.
\end{equation}

\begin{proof}

The compatibility with the anchor maps reads

$$\rho_{TH}\circ i_{AH}=Ti_{AG}\circ \tilde{\rho},$$

\noindent which is exactly the definition of the prolonged anchor map.

Let us check now the compatibility with the Lie brackets. For that, consider a star section $u:G\rmap H$. Then, there are sections $Tu:TG\rmap TH$ and $A(u):AG\rmap AH$. Both are related by $A(u)=Tu|_{AG}$. In particular, the following identity holds $i_{AH}\circ A(u)=Tu\circ i_{AG}$. Similarly, every section $k\in\Gamma(K)$ of the core of $H$, induces a section of the tangent prolongation $TH\rmap TG$. Indeed, first consider the induced section $k_{H}\in\Gamma_{G}(H)$ and then construct the core section $\hat{k}_{H}\in\Gamma_{TG}(TH)$ defined in the usual way

$$\hat{k}_{H}(X_g)=T(0^{H})X_g+ \overline{k_{H}(g)}.$$

\noindent For every $x\in\epsilon_M(M)\subseteq G$ one has $k_{H}(x)=k(x)$, and thus at any $u_x\in (AG)_{x}\subseteq T_xG$ we get

$$\hat{k}_{H}(u_x)= A(0^{H})u_x+ \overline{k(x)}.$$

\noindent Hence we conclude that $i_{AH}\circ k^{core}=\hat{k}_H\circ i_{AG}$.  Let us show that equation \eqref{brackets} for a pair o sections $A(u), A(v)$, where $u,v:G\rmap H$ are star sections. Indeed,

$$i_{AH}\circ [A(u),A(v)]=i_{AH}\circ A[u,v]=T[u,v]\circ i_{AG}=[Tu,Tv]\circ i_{AG},$$

\noindent as desired. It remains to show the bracket condition \eqref{brackets} for sections of the form $A(u), k^{core}$, where $u:G\rmap H$ is a star section and $k:M\rmap K$ is a section of the core. On one hand, one has that

$$i_{AH}\circ [A(u),k^{core}]=i_{AH}\circ (D_{u}k)^{core}=(\widehat{D_{u}k})_H\circ i_{AG}$$

\noindent On the other hand, 

$$[Tu,\hat{k}_H]\circ i_{AG}=\widehat{[u,k_H]}\circ i_{AG}$$

Notice that, to conclude that \eqref{brackets} holds in this case, it suffices to show that $(\widehat{D_{u}k})_H\circ i_{AG}=\widehat{[u,k_H]}\circ i_{AG}$. Indeed, using the fact that $k=k_H\circ \epsilon_M$ for every section $k:M\rmap K$, we conclude that if $v_x\in A_xG$, then

\begin{align*}
\widehat{[u,k_H]}(u_x)=&T0^{H}_G(u_x)+ \frac{d}{dt}(t[u,k_H](x))|_{t=0}\\
=& T0^{H}_G(u_x)+ \frac{d}{dt}(t(D_{u}k)_H(x))|_{t=0}\\
=& \widehat{(D_uk)_H}(v_x).
\end{align*}

This finishes the proof.

\end{proof}

\subsubsection{Dirac groupoids as $\mathcal{LA}$-groupoids}

Let $L_G$ be a multiplicative Dirac structure on a Lie groupoid $G\rightrightarrows M$. This means that we have a $\mathcal{VB}$-subgroupoid $L_G\rightrightarrows E$ of $\mathbb{T}G\rightrightarrows TM\oplus A^*G$, such that $L_G\subseteq \mathbb{T}G$ is also a Dirac subbundle. In particular there is a canonical Lie algebroid structure on $L_G\rmap G$ with anchor map $L_G\rmap TG$ the natural projection and Lie bracket $\Cour{\cdot,\cdot}$ on $\Gamma_G(L_G)$. Given sections $e_1,e_2$ of $E$, there exist star sections $a_1,a_2$ of $L_G$ covering $e_1$ and $e_2$, respectively. Since $L_G$ is involutive with respect to the Courant bracket, we conclude that $\Cour{a_1,a_2}$ is a star section of $L_G$ covering a section $e$ of $E$. We define $[e_1,e_2] := e$. A straightforward computation shows that with respect to this Lie bracket and the natural projection $E\rmap TM$, the vector bundle $E\rmap M$ becomes a Lie algebroid.

\begin{proposition}
 
A multiplicative Dirac structure $L_G$ on $G$ gives rise to an $\mathcal{LA}$-groupoid

\begin{align}
\xy 
(-15,10)*+{L_G}="t0"; (-15,-10)*+{E}="b0"; 
(15,10)*+{G}="t1"; (15,-10)*+{M}="b1"; 
{\ar@<.25ex>^{p_{G}\oplus c_G} "t0"; "t1"}; 
%{\ar@<-.5ex>_{t} "t0"; "t1"}; 
{\ar@<.25ex>_{q_{E}} "b0"; "b1"}; 
%{\ar@<-.5ex>_{ t} "b0"; "b1"}; 
{\ar@<.5ex> "t0"; "b0"}; 
{\ar@<-.5ex> "t0"; "b0"}; 
{\ar@<.5ex> "t1"; "b1"}; 
{\ar@<-.5ex> "t1"; "b1"}; 
\endxy
\end{align}

\noindent where $p_G$ and $c_G$ denote the tangent projection and the cotangent projection, respectively.

\end{proposition}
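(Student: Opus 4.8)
The plan is to verify that the square with top arrow $L_G\rightrightarrows E$ (the $\mathcal{VB}$-subgroupoid structure inherited from $\mathbb{T}G\rightrightarrows TM\oplus A^*G$) and vertical arrows the Lie algebroid structures $L_G\rmap G$ (from the Courant bracket and projection $p_G\oplus c_G$) and $E\rmap M$ (constructed in the paragraph preceding the statement) satisfies the defining axioms of an $\mathcal{LA}$-groupoid as recalled above. Concretely, I must show that each structure map of the groupoid $L_G\rightrightarrows E$ --- source, target, unit, inversion, and multiplication --- is a Lie algebroid morphism over the corresponding structure map of $G\rightrightarrows M$, and that the anchor $L_G\rmap TG$ is a groupoid morphism over the anchor $E\rmap TM$.

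First I would dispatch the easy structure maps. The source and target maps of $L_G\rightrightarrows E$ are restrictions of the source and target maps $\tilde{s},\tilde{t}$ of $\mathbb{T}G$, which are linear (being $\mathcal{VB}$-groupoid structure maps), and the point is that they intertwine the Courant bracket on $\Gamma_G(L_G)$ with the bracket on $\Gamma_M(E)$. This is essentially built into the definition of the bracket on $E$: a section $e$ of $E$ is by construction $\tilde{s}$- (resp.\ $\tilde{t}$-) related to a star section of $L_G$, and the bracket $[e_1,e_2]$ was \emph{defined} as the base section covered by $\Cour{a_1,a_2}$. Hence the source/target maps are bracket-preserving by fiat, and compatibility with anchors is the commuting square relating the projections $L_G\rmap TG$, $E\rmap TM$ with $\tilde{s},\tilde{t}$ and $Ts,Tt$. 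The unit section and inversion are handled similarly, using that they are restrictions of the corresponding $\mathbb{T}G$-maps and that star sections and core sections behave well under them.

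The genuine content, and the main obstacle, is the multiplicativity axiom: that $m_{L_G}:(L_G)_{(2)}\rmap L_G$ is a Lie algebroid morphism over $m_G:G_{(2)}\rmap G$, where $(L_G)_{(2)}$ carries the pullback Lie algebroid structure over $G_{(2)}$. Compatibility with anchors is exactly Proposition~\ref{multiplicationLG}(i) restricted to $L_G$. The bracket compatibility \eqref{brackets} is precisely the restriction to $L_G$ of Proposition~\ref{multiplicationLG}(ii), which established that $(m_{\mathbb{T}},m_G)$ is bracket-preserving for the direct-product Courant bracket on $(\mathbb{T}G)_{(2)}$. The key step is therefore to observe that since $L_G$ is a Lie subgroupoid of $\mathbb{T}G$ and is involutive under the Courant bracket, the multiplicative relation $m_{\mathbb{T}}\circ(a_1,a_2)=a\circ m_G$ for sections of $\mathbb{T}G$ descends to sections of $L_G$: composable pairs of star sections of $L_G$ multiply to star sections of $L_G$, and their Courant brackets, being sections of $L_G$ again, satisfy the same relation. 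Thus Proposition~\ref{multiplicationLG} applied verbatim to sections taking values in $L_G$ yields the morphism condition.

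Finally I would address the anchor compatibility: the anchor of the top algebroid $L_G\rmap G$ is the projection $pr_{TG}:L_G\rmap TG\subseteq \mathbb{T}G$, and I must check this is a morphism of Lie groupoids from $L_G\rightrightarrows E$ to $TG\rightrightarrows TM$ over the anchor $E\rmap TM$. This follows because $pr_{TG}$ is the restriction of the $\mathbb{T}G$-groupoid projection onto the tangent factor, which is a groupoid morphism, and by construction $E\rmap TM$ is the corresponding restriction on bases. The one point requiring care is that all the auxiliary algebroid structures invoked --- the tangent groupoid on $TG$, the pullback algebroid on $(L_G)_{(2)}$, and the identification $A(TG)\cong T(AG)$ from diagram~\eqref{ATG} --- are exactly those used in the general definition, so that Mackenzie's naturality (Proposition~\ref{natural}) and Propositions~\ref{multiplicativepairing} and~\ref{multiplicationLG} fit together without any new computation. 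I expect the multiplicativity clause to be where the real work lives, and I would spell it out by writing composable star sections explicitly and invoking Proposition~\ref{multiplicationLG}(ii).
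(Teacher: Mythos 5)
Your proposal is correct and follows essentially the same route as the paper: the source, target, unit and inversion maps are handled as restrictions of the corresponding $\mathbb{T}G$-structure maps, and the multiplicativity axiom is reduced to Proposition \ref{multiplicationLG}, with the Courant-involutivity of $L_G$ ensuring that the $m$-related sections can be taken with values in $L_G$. The extra detail you supply on why Proposition \ref{multiplicationLG} descends to sections of $L_G$ is a useful elaboration of what the paper leaves implicit, but it is not a different argument.
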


\begin{proof}
 
Since the structure mappings defining the Lie groupoid $L_G\rightrightarrows E$ are restrictions of the structure mappings of the tangent and cotangent groupoids, a straightforward computation shows that these structure mappings are Lie algebroid morphisms over the structure mapping of $G$. The fact that the multiplication on $L_G$ is a Lie algebroid morphism over the multiplication on $G$ follows from Proposition \ref{multiplicationLG}. An argument similar to the one used in the proof of Proposition \ref{multiplicationLG} shows that the inversion map on $L_G$ is a Lie algebroid morphism. This proves the statement.

\end{proof}

\subsection{The Lie algebroid of a multiplicative Dirac structure}\label{Lieofmultiplicativedirac}

Let $G$ be a Lie groupoid over $M$ with Lie algebroid $AG$. Let $L_G$ be a multiplicative Dirac structure on a Lie groupoid $G$. According to Proposition \ref{multiplicativepairing}, the canonical pairing $\langle\cdot,\cdot\rangle_G:\mathbb{T}G\oplus \mathbb{T}G\rmap \R$ is a Lie groupoid morphism. Applying the Lie functor yields a nondegenerate symmetric pairing 

$$A(\langle\cdot,\cdot\rangle_G):(A(TG)\oplus A(T^*G))\times_{AG}(A(TG)\oplus A(T^*G))\rmap \R.$$

\noindent Let $\langle\cdot,\cdot\rangle_{AG}$ denote the canonical nondegenerate symmetric pairing on $\mathbb{T}(AG)$. Recall that there exist canonical isomorphisms of Lie algebroids $j_G:T(AG)\rmap A(TG)$ and $j'_G:A(T^*G)\rmap T^*(AG)$ (see \eqref{ATG} and \eqref{AT*G}). Since $\langle\cdot,\cdot\rangle_{AG}$ is just a suitable restriction of $T\langle\cdot,\cdot\rangle_G$, one concludes that the canonical map 

$$j^{-1}_G\oplus j'_G:A(TG)\oplus A(T^*G)\rmap T(AG)\oplus T^*(AG),$$

\noindent is a fiberwise isometry with respect to $A(\langle\cdot,\cdot\rangle_G)$ and $\langle\cdot,\cdot\rangle_{AG}$. This is a useful tool for transporting Lagrangian subbundles of $TG\oplus T^*G$ to Lagrangian subbundles of $T(AG)\oplus T^*(AG)$. For instance, given a $\mathcal{VB}$-subgroupoid $L_G$ of $TG\oplus T^*G$, we can apply the Lie functor to obtain a $\mathcal{VB}$-subalgebroid $A(L_G)\subseteq A(TG)\oplus A(T^*G)$. We mimic the construction of tangent Dirac structures, giving rise to a $\mathcal{VB}$-subalgebroid of $T(AG)\oplus T^*(AG)$ defined by $$L_{AG}:=(j^{-1}_G\oplus j'_G)(A(L_G)).$$

\noindent The following result is a straightforward consequence of the previous discussion.

\begin{proposition}\label{multiplicativeisotropy}
 
Let $L_G\subseteq TG\oplus T^*G$ be a $\mathcal{VB}$-subgroupoid. Consider the associated $\mathcal{VB}$-subalgebroid $L_{AG}\subseteq T(AG)\oplus T^*(AG)$. Then $L_G$ is isotropic with respect to $\langle\cdot,\cdot\rangle_G$ if and only if $L_{AG}$ is isotropic with respect to $\langle\cdot,\cdot\rangle_{AG}$.

\end{proposition}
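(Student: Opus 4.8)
The plan is to reduce the claimed equivalence to a single comparison between $L_G$ and its Lie algebroid $A(L_G)$, and then to exploit the multiplicativity of the canonical pairing from Proposition \ref{multiplicativepairing}. First I would invoke the facts recorded immediately before the statement: $L_{AG}=(j^{-1}_G\oplus j'_G)(A(L_G))$, and $j^{-1}_G\oplus j'_G$ is a fiberwise isometry between $A(\langle\cdot,\cdot\rangle_G)$ on $A(TG)\oplus A(T^*G)$ and $\langle\cdot,\cdot\rangle_{AG}$ on $T(AG)\oplus T^*(AG)$. Since an isometry carries isotropic subbundles to isotropic subbundles in both directions, this gives at once that $L_{AG}$ is isotropic with respect to $\langle\cdot,\cdot\rangle_{AG}$ if and only if $A(L_G)$ is isotropic with respect to $A(\langle\cdot,\cdot\rangle_G)$. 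Hence it remains only to show that $L_G\subseteq\mathbb{T}G$ is isotropic with respect to $\langle\cdot,\cdot\rangle_G$ if and only if $A(L_G)$ is isotropic with respect to $A(\langle\cdot,\cdot\rangle_G)$.

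To set this up I would use that, by Proposition \ref{multiplicativepairing}, $\langle\cdot,\cdot\rangle_G:\mathbb{T}G\oplus\mathbb{T}G\rmap\R$ is a morphism of Lie groupoids, with $\R$ an abelian group over a point. The Whitney sum $L_G\oplus L_G$ is a Lie subgroupoid of $\mathbb{T}G\oplus\mathbb{T}G$ over $E\oplus E$, and by definition $L_G$ is isotropic precisely when the restriction $\langle\cdot,\cdot\rangle_G\vert_{L_G\oplus L_G}$ is the zero morphism into $\R$. Applying the Lie functor, and using $A(L_G\oplus L_G)=A(L_G)\oplus A(L_G)$ together with functoriality of $A$ along the inclusion, one identifies the differentiated morphism with $A(\langle\cdot,\cdot\rangle_G)\vert_{A(L_G)\oplus A(L_G)}$, which is exactly the infinitesimal pairing restricted to $A(L_G)$. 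The forward implication is then immediate: if $\langle\cdot,\cdot\rangle_G\vert_{L_G\oplus L_G}$ vanishes, so does its image under the Lie functor, and therefore $A(L_G)$ is isotropic.

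The hard part will be the converse, which is an integration statement: I must show that a multiplicative $\R$-valued pairing on the $\mathcal{VB}$-groupoid $L_G$ whose infinitesimalization vanishes is itself identically zero. Here the multiplicativity from Proposition \ref{multiplicativepairing} is essential. The identity $\langle m_{\mathbb{T}}(a,a'),m_{\mathbb{T}}(b,b')\rangle_G=\langle a,b\rangle_G+\langle a',b'\rangle_G$ for composable pairs shows that $\langle\cdot,\cdot\rangle_G\vert_{L_G\oplus L_G}$ is a multiplicative function on $L_G\oplus L_G$ whose vanishing locus is closed under multiplication and contains the units; a multiplicative function whose derivative along the source fibers vanishes at the units must vanish on the source-connected component of the units. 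I expect this propagation — transporting vanishing from the units throughout $G$ by the groupoid multiplication, and thereby deducing isotropy of every fiber $(L_G)_g$ from isotropy of $A(L_G)$ — to be the main obstacle, and it is precisely where the source-connectedness underlying the source–simply–connected hypothesis of the main theorem enters. The same step can be reorganized through orthogonal complements: multiplicativity of $\langle\cdot,\cdot\rangle_G$ makes $L_G^{\perp}$ a Lie subgroupoid with $A(L_G^{\perp})=A(L_G)^{\perp}$ (using nondegeneracy of $A(\langle\cdot,\cdot\rangle_G)$), so that the inclusion $A(L_G)\subseteq A(L_G)^{\perp}$ must be integrated to $L_G\subseteq L_G^{\perp}$, meeting the same connectedness obstacle.
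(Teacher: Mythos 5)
Your reduction is exactly the route the paper intends: its entire proof of this proposition is the sentence that it is a ``straightforward consequence of the previous discussion,'' i.e.\ of the two facts you quote (the definition $L_{AG}=(j^{-1}_G\oplus j'_G)(A(L_G))$ and the fact that $j^{-1}_G\oplus j'_G$ is a fiberwise isometry between $A(\langle\cdot,\cdot\rangle_G)$ and $\langle\cdot,\cdot\rangle_{AG}$). The paper thus leaves implicit precisely the step you isolate, namely the equivalence between isotropy of $L_G$ and isotropy of $A(L_G)$, and your treatment of that step is more careful than the paper's. The forward direction is, as you say, immediate by applying the Lie functor to the vanishing groupoid morphism $\langle\cdot,\cdot\rangle_G\vert_{L_G\oplus L_G}$. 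For the converse your diagnosis is right: it is an integration statement, and the mechanism you describe works — right-translating elements of $A(L_G\oplus L_G)$ along source fibers shows that a multiplicative $\R$-valued function whose induced Lie algebroid morphism vanishes is locally constant on source fibers, hence zero on the source-connected part; and the source fibers of $L_G\oplus L_G$ are affine bundles over those of $G$, so they are connected exactly when those of $G$ are. This completes the proof under source-connectedness of $G$, which is all that is needed where the converse is actually used (the proof of Theorem \ref{maintheorem}, where $G$ is source simply connected).

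Your suspicion that connectedness is not a removable crutch is also correct: the converse genuinely fails for non--source-connected $G$, so the proposition as stated in the paper is too strong. For a counterexample take $G=\Z\times\R^2$ as a Lie group over a point, trivialize $T_gG\cong\R^2$ and $T^*_gG\cong(\R^2)^*$ by translation, and let $(L_G)_g$ be the graph of $\sigma_g:=n\,S$ for $g=(n,v)$, where $S:(\R^2)^*\rmap\R^2$ is a fixed nonzero \emph{symmetric} map. Since the multiplication on $\mathbb{T}G$ reads $(X,\alpha)\bullet(Y,\alpha)=(X+Y,\alpha)$ in this trivialization and $g\mapsto\sigma_g$ is a group homomorphism into $\Hom((\R^2)^*,\R^2)$, this $L_G$ is a $\mathcal{VB}$-subgroupoid; over the identity component it equals $0\oplus T^*G$, so $A(L_G)$ (hence $L_{AG}$) is isotropic, yet $\langle(S\alpha,\alpha),(S\beta,\beta)\rangle_G=2\,\alpha(S\beta)\neq 0$ on the component $n=1$. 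So the hypothesis of source-connectedness should be added to the statement; with it, your argument is a complete and correct proof, whereas the paper supplies none.
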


In particular, if $L_G\subseteq TG\oplus T^*G$ be a $\mathcal{VB}$-subgroupoid with associated $\mathcal{VB}$-subalgebroid $L_{AG}\subseteq T(AG)\oplus T^*(AG)$. Then $L_G$ is an almost Dirac structure on $G$ if and only if $L_{AG}$ is an almost Dirac structure on $AG$.

Now we want to deal with integrability issues. For that, consider a multiplicative Dirac structure $L_G\subseteq \mathbb{T}G$ and let $\mathcal{LA}$-groupoid $(L_G,G,E,M)$ be the associated $\mathcal{LA}$-groupoid. Applying the Lie functor we obtain the prolonged Lie algebroid structure on $A(L_G)\rmap AG$, and we use the canonical map $j^{-1}_G\oplus j'_G:A(TG)\oplus A(T^*G)\rmap T(AG)\oplus T^*(AG)$, to define a Lie algebroid $L_{AG}=(j^{-1}_G\oplus j'_G)(A(L_G))$ over $AG$, characterized by the fact that $j^{-1}_G\oplus j'_G:A(L_G)\rmap L_{AG}$ is a Lie algebroid isomorphism. We have seen that $L_{AG}\subseteq \mathbb{T}(AG)$ is a Lagrangian subbundle with respect to the canonical pairing $\langle\cdot,\cdot\rangle_{AG}$ on $\mathbb{T}(AG)$. We claim that the Lie bracket on $\Gamma_{AG}(L_{AG})$ induced by the prolonged Lie bracket on $\Gamma_{AG}(A(L_G))$ coincides with the Courant bracket. Indeed, since the tangent Lie algebroid $TL_G\rmap TG$ is isomorphic to $L_{TG}\rmap TG$, where the latter is equipped with the algebroid structure induced by the tangent Dirac structure $L_{TG}\subset TTG\oplus T^*TG$, and $A(L_G)$ is a Lie subalgebroid of $TL_G$ (Proposition \ref{natural}), then the bracket on sections of $L_{AG}$ induced by the identification $A(L_G)=L_{AG}$ is exactly the restriction of the Courant bracket on $\Gamma(T(AG)\oplus T^*(AG))$. We have proved the following.

\begin{theorem}\label{maintheorem1}

Let $G$ be a Lie groupoid with Lie algebroid $AG$. Then, there is a canonical map

\begin{align*}
\mathrm{Dir}_{mult}(G)&\rmap \mathrm{Dir}_{morph}(AG)\\
L_G&\mapsto L_{AG}:=(j^{-1}_G\oplus j'_G)(A(L_G))
\end{align*}

That is, up to a canonical identification, the Lie algebroid of a multiplicative Dirac structure $L_G\subset \mathbb{T}G$ defines a Dirac structure $L_{AG}$ on $AG$ which is also a Lie subalgebroid of $\mathbb{T}(AG)$.

\end{theorem}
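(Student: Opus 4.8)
The plan is to verify the three defining properties of a morphic Dirac structure for $L_{AG}:=(j^{-1}_G\oplus j'_G)(A(L_G))$: that it is a \emph{linear} Dirac structure (a double vector subbundle of $\mathbb{T}(AG)$), that it is \emph{Lagrangian} with respect to $\langle\cdot,\cdot\rangle_{AG}$, and that it is a \emph{Lie subalgebroid} of $\mathbb{T}(AG)=T(AG)\oplus T^*(AG)$ whose induced bracket is Courant-involutive. The whole construction rests on the $\mathcal{LA}$-groupoid $(L_G,G,E,M)$ attached to a multiplicative $L_G$: applying the Lie functor in the groupoid direction produces the prolonged Lie algebroid $A(L_G)\rmap AG$ inside $A(TG)\oplus A(T^*G)$, and transporting along the canonical isomorphisms $j_G$ (see \eqref{ATG}) and $j'_G$ (see \eqref{AT*G}) lands us in $\mathbb{T}(AG)$.

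For linearity, I would first note that the Lie functor sends the $\mathcal{VB}$-subgroupoid $L_G\subseteq\mathbb{T}G$ to a $\mathcal{VB}$-subalgebroid $A(L_G)\subseteq A(TG)\oplus A(T^*G)$, which is in particular a double vector subbundle. Since $j_G$ and $j'_G$ are suitable restrictions of the canonical involution $J_G$ and the Tulczyjew map $\Theta_G$, the map $j^{-1}_G\oplus j'_G$ is an isomorphism of double vector bundles, so it carries $A(L_G)$ onto a double vector subbundle $L_{AG}\subseteq\mathbb{T}(AG)$; hence $L_{AG}$ is linear. The Lagrangian property is then immediate from Proposition \ref{multiplicativeisotropy} and the remark following it: $L_G$ being maximal isotropic (an almost Dirac structure) forces $L_{AG}$ to be maximal isotropic as well, because $j^{-1}_G\oplus j'_G$ is a fiberwise isometry from $A(\langle\cdot,\cdot\rangle_G)$ to $\langle\cdot,\cdot\rangle_{AG}$ and preserves ranks.

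The substantive part is integrability, i.e. identifying the bracket transported onto $L_{AG}$ with the restriction of the Courant bracket $\Cour{\cdot,\cdot}_{AG}$ on $\mathbb{T}(AG)$ to $\Gamma_{AG}(L_{AG})$. Here I would invoke Proposition \ref{natural}, applied to the $\mathcal{LA}$-groupoid $(L_G,G,E,M)$: the canonical embedding $i_{A(L_G)}:A(L_G)\rmap TL_G$ into the tangent prolongation is a Lie algebroid morphism, so the prolonged bracket on $A(L_G)$ is the restriction of the tangent-prolongation bracket on $TL_G$. Viewing $L_G$ as a Lie algebroid over $G$ via the Courant bracket, its tangent prolongation $TL_G$ is isomorphic, through $J_G\oplus\Theta_G$, to the tangent Dirac structure $L_{TG}$, whose bracket is exactly the restriction of the Courant bracket by Proposition \ref{tangentintegrability} (the isotropy being Proposition \ref{tangentisotropic}). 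Chasing these identifications, and using that $j^{-1}_G\oplus j'_G$ is assembled from the very same $J_G,\Theta_G$ that intertwine the Courant data, one concludes that the bracket on $L_{AG}$ is the restriction of $\Cour{\cdot,\cdot}_{AG}$. In particular $L_{AG}$ is Courant-involutive, hence a genuine Dirac structure on $AG$, and since the bracket descends from the prolonged Lie algebroid it is simultaneously a Lie subalgebroid of $\mathbb{T}(AG)$ --- that is, morphic.

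I expect the main obstacle to be precisely this bracket identification, namely matching Mackenzie's abstract prolonged bracket --- defined case-by-case on star sections $A(u)$ and core sections $k^{\mathrm{core}}$ by $[A(u),A(v)]=A[u,v]$, $[k^{\mathrm{core}}_1,k^{\mathrm{core}}_2]=0$ and $[A(u),k^{\mathrm{core}}]=(D_uk)^{\mathrm{core}}$ --- with the Courant bracket. The cleanest route is to observe that under $j^{-1}_G\oplus j'_G$ the star sections and core sections correspond exactly to the tangent-type and vertical-type lifts that generate $\Gamma(L_{TG})$, so the three structural relations above map onto the three Courant relations established in Proposition \ref{tangentintegrability}. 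Verifying this correspondence of generators, rather than the linearity or Lagrangian claims, is where the real work lies; once it is in place, Theorem \ref{maintheorem1} follows.
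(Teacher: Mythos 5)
Your proposal is correct and follows essentially the same route as the paper: the Lagrangian property via Proposition \ref{multiplicativeisotropy}, and the bracket identification by combining Proposition \ref{natural} (embedding the prolonged algebroid $A(L_G)$ into the tangent prolongation $TL_G$) with the isomorphism $TL_G\cong L_{TG}$ and Proposition \ref{tangentintegrability}. You correctly single out the matching of star/core sections with tangent/vertical lifts as the crux, which is precisely the content the paper delegates to Proposition \ref{natural}.
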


It is interesting to observe that since $L_{AG}$ is the Lie algebroid of the $\mathcal{LA}$-groupoid $L_G$, in particular $L_{AG}$ inherits the structure of a \emph{double Lie algebroid} \cite{MacdoubleII}. Double Lie algebroids were introduced by Mackenzie in \cite{MacEr} as a way to understand Drinfeld's doubles of Lie bialgebroids. As a result, multiplicative Dirac structures provide interesting examples of double Lie algebroids.

\subsection{Dirac groupoids vs Dirac algebroids}

This section is concerned with the statement and proof of the main result of this work. We will prove that, whenever $G$ is a source simply connected Lie groupoid with Lie algebroid $AG$, then the map in Theorem \ref{maintheorem1} is a bijection.

For that, recall that if $M$ is a smooth manifold and $L\subset \mathbb{T}M$ is a Lagrangian subbundle, then there is a well defined element $\mu_L\in\Gamma(\wedge^3L^*)$ given by

\begin{equation}\label{couranttensor}
\mu_L(a_1,a_2,a_2):=\langle \Cour{a_1,a_2}, a_3\rangle.
\end{equation}

The element $\mu_L\in\Gamma(\wedge^3L^*)$ is referred to as the \textbf{Courant $3$-tensor} of $L$. Notice that a Lagrangian subbundle $L\subset \mathbb{T}M$ is a Dirac structure if and only if $\mu_L$ vanishes.

\begin{proposition}\label{multiplicativemu}

Let $G$ be a Lie groupoid over $M$. Consider a Lagrangian subbundle $L_G\subset TG\oplus T^*G$, which is also a Lie subgroupoid. Then, the Courant $3$-tensor of $L_G$ is multiplicative, that is

$$\mu_{L_G}:\prod_{p_G\oplus c_G}^{3}L_G\rmap \R,$$

\noindent is a groupoid morphism. 
\end{proposition}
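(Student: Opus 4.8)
The plan is to deduce the multiplicativity of $\mu_{L_G}$ from the two compatibility results already at our disposal: Proposition \ref{multiplicativepairing}, which says the pairing $\langle\cdot,\cdot\rangle_G$ is a groupoid morphism, and Proposition \ref{multiplicationLG}, which controls how the Courant bracket interacts with the multiplication $m_{\mathbb{T}}$ on $\mathbb{T}G$. First I would recall that, because $L_G$ is Lagrangian, the expression $\mu_{L_G}(a_1,a_2,a_3)=\langle\Cour{a_1,a_2},a_3\rangle_G$ is in fact $C^\infty(G)$-linear and totally skew-symmetric in its arguments: isotropy of $L_G$ causes the Leibniz correction terms of the Courant bracket to drop out, since each such term either carries a coefficient $\langle\cdot,\cdot\rangle_G$ of two sections of $L_G$ or is itself paired with a further section of $L_G$. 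Hence $\mu_{L_G}\in\Gamma(\wedge^3 L_G^*)$ is a genuine tensor and may be evaluated on individual elements of $\prod^{3}_{p_G\oplus c_G}L_G$. Since $\R$ is a groupoid over a point, proving that $\mu_{L_G}$ is a groupoid morphism reduces to the single additivity identity
\[
\mu_{L_G}\big(w_1\cdot w_1',\,w_2\cdot w_2',\,w_3\cdot w_3'\big)=\mu_{L_G}(w_1,w_2,w_3)+\mu_{L_G}(w_1',w_2',w_3')
\]
for every composable pair of triples, $(w_1,w_2,w_3)$ sitting over $g\in G$ and $(w_1',w_2',w_3')$ over $h\in G$ with $(g,h)\in G_{(2)}$, where $\cdot$ denotes the multiplication of $L_G$ obtained by restricting $m_{\mathbb{T}}$.

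Next I would realize these triples through $m$-related sections of $L_G$. Exactly as in the hypotheses of Proposition \ref{multiplicationLG}, for each index $j$ I would choose sections $c_j,c_j^{(1)},c_j^{(2)}$ of $L_G$ with $m_{\mathbb{T}}\circ(c_j^{(1)},c_j^{(2)})=c_j\circ m_G$ and with prescribed values $c_j^{(1)}(g)=w_j$, $c_j^{(2)}(h)=w_j'$, and $c_j(gh)=w_j\cdot w_j'$. Applying part (ii) of Proposition \ref{multiplicationLG} to $c_1,c_2$, witnessed by the factor pairs $(c_1^{(1)},c_1^{(2)})$ and $(c_2^{(1)},c_2^{(2)})$, shows that $\Cour{c_1,c_2}$ is again $m$-related, with factors $\Cour{c_1^{(1)},c_2^{(1)}}$ and $\Cour{c_1^{(2)},c_2^{(2)}}$, so that
\[
\Cour{c_1,c_2}(gh)=\Cour{c_1^{(1)},c_2^{(1)}}(g)\cdot\Cour{c_1^{(2)},c_2^{(2)}}(h).
\]
Pairing this with $c_3(gh)=c_3^{(1)}(g)\cdot c_3^{(2)}(h)$ and invoking Proposition \ref{multiplicativepairing} then yields
\[
\langle\Cour{c_1,c_2}(gh),c_3(gh)\rangle_G=\langle\Cour{c_1^{(1)},c_2^{(1)}}(g),c_3^{(1)}(g)\rangle_G+\langle\Cour{c_1^{(2)},c_2^{(2)}}(h),c_3^{(2)}(h)\rangle_G,
\]
whose left-hand side is $\mu_{L_G}(w_1\cdot w_1',w_2\cdot w_2',w_3\cdot w_3')$ and whose two right-hand summands are $\mu_{L_G}(w_1,w_2,w_3)$ and $\mu_{L_G}(w_1',w_2',w_3')$. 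This is precisely the desired additivity, and compatibility of $\mu_{L_G}$ with source and target is automatic because $\R$ sits over a point.

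The step I expect to be the main obstacle is the existence of the $m$-related sections $c_j^{(1)},c_j^{(2)},c_j$ that are \emph{tangent to the subgroupoid} $L_G$ and realize arbitrary prescribed composable values, rather than merely being sections of the ambient $\mathbb{T}G$. This is exactly the point where the hypothesis that $L_G\subseteq\mathbb{T}G$ is a Lie subgroupoid (indeed a $\mathcal{VB}$-subgroupoid) is essential: the multiplication $m_{\mathbb{T}}$ restricts to $L_G$, so $m$-related triples of sections of $L_G$ span its fibers and can be produced locally by the same construction underlying the proof of Proposition \ref{multiplicationLG}. Everything else is the bookkeeping of substituting these sections into the two propositions, and no new analytic input is required.
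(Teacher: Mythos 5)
Your proposal is correct and follows essentially the same route as the paper: both arguments realize the prescribed composable elements by $m_{\mathbb{T}}$-related sections of $L_G$ (using fiberwise surjectivity of the multiplication of the $\mathcal{VB}$-subgroupoid), then apply Proposition \ref{multiplicationLG}(ii) to see that the Courant bracket of related sections is again related, and finish with the multiplicativity of the pairing from Proposition \ref{multiplicativepairing}. The only cosmetic difference is that you spell out the tensoriality of $\mu_{L_G}$ and the reduction to the additivity identity, which the paper leaves implicit.
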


\begin{proof}

Let us consider composable pairs $a^i_g,\overline{a}^i_h$ in $L_G$ with $i=1,2,3.$ Set $c^i_{gh}=m_{\mathbb{T}}(a^i_g,\overline{a}^i_h)\in (L_G)_{gh}$, for $i=1,2,3$. Choose a section $c^i\in\Gamma(L_G)$ such that $c^i(gh)=c^i_{gh}$. Since $L_G$ is a $\mathcal{VB}$-groupoid, the multiplication on $L_G$ is fiberwise surjective. In particular, there exist sections $a^i,\overline{a}^i\in\Gamma(L_G)$ such that $m_{\mathbb{T}}\circ (a^i,\overline{a}^i)=c^i\circ m_G$, for every $i=1,2,3$. Clearly $a^i(g)=a^i_g$ and $\overline{a}^i(h)=\overline{a}^i_h$, for $i=1,2,3$.

Then,

\begin{align*}
 \mu_{L_G}((a^1_g,a^2_g,a^3_g)*(\overline{a}^1_h,\overline{a}^2_h,\overline{a}^3_h))=& \mu_{L_G}(c^1_{gh},c^2_{gh},c^3_{gh})\\
 =& \langle \Cour{c^1,c^2}(gh), c^3(gh)\rangle\\
 =& \langle m_{\mathbb{T}}(\Cour{a^2,a^2},\Cour{\overline{a}^1,\overline{a}^2})(g,h),m_{\mathbb{T}}(a^3,\overline{a}^3)(g,h) \rangle
\end{align*}

\noindent The last identity follows from the fact that $(m_{\mathbb{T}},m_G)$ is a Courant morphism (see Proposition \ref{multiplicationLG}). Now we use the fact that $\langle\cdot,\cdot\rangle_G$ is a groupoid morphism to conclude that

$$\mu_{L_G}((a^1_g,a^2_g,a^3_g)*(\overline{a}^1_h,\overline{a}^2_h,\overline{a}^3_h))=\mu_{L_G}(a^1_g,a^2_g,a^3_g) + \mu_{L_G}(\overline{a}^1_h,\overline{a}^2_h,\overline{a}^3_h).$$

\noindent This proves that the function $\mu_{L_G}$ is multiplicative.

\end{proof}

We would like to describe explicitly the Lie algebroid morphism induced by the multiplicative tensor $\mu_{L_G}:\prod_{p_G\oplus c_G}^{3}L_G\rmap \R$. For that, we need the next lemma.

\begin{lemma}\label{prop:tangentonmu}

Let $M$ be a smooth manifold. Consider a Lagrangian subbundle $L_M\subset \mathbb{T}M$. Then, for every $(\dot{a}_1,\dot{a}_2,\dot{a}_3)\in TL_M$ the following identity holds

$$T\mu_{L_M}(\dot{a}_1,\dot{a}_2,\dot{a}_3)=\mu_{L_{TM}}((J_M\oplus\Theta_M)\dot{a}_1,(J_M\oplus\Theta_M)\dot{a}_2,(J_M\oplus\Theta_M)\dot{a}_3),$$

\noindent where $L_{TM}\subset \mathbb{T}(TM)$ is the tangent lift of $L_M$.

\end{lemma}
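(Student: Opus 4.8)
The plan is to regard both sides as sections of one and the same bundle, namely $\wedge^3(TL_M)^*$ with the dual taken relative to the prolongation structure $TL_M\to TM$, using the vector bundle isomorphism $J_M\oplus\Theta_M:TL_M\to L_{TM}$ over $TM$; once this is done it suffices to verify the identity on a generating family of sections. The convenient generators are the linear sections $Ta$ and the core sections $\hat a$ of $TL_M\to TM$ attached to $a\in\Gamma(L_M)$: these generate $\Gamma_{TM}(TL_M)$, and $J_M\oplus\Theta_M$ carries them precisely to the generators $a^T$ and $a^v$ of $L_{TM}$ by \eqref{JMrelations} and \eqref{thetarelations}. Thus the whole identity reduces to matching the two sides slot by slot on the triples built out of $Ta,\hat a$ on the left and $a^T,a^v$ on the right.

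First I would record the behaviour of the left-hand side on these generators. Writing $\mu_{L_M}$ as a fibrewise $3$-linear function on $\bigoplus^3_M L_M$ and $T\mu_{L_M}=d\mu_{L_M}$ as its tangent lift on $T(\bigoplus^3_M L_M)=\bigoplus^3_{TM}TL_M$, the classical theory of complete lifts of tensors (see \cite{Yano, Grabowski}) — or a direct check in coordinates where $\mu_{L_M}=\mu_{\alpha\beta\gamma}(x)\,u_1^\alpha u_2^\beta u_3^\gamma$ — yields
$$T\mu_{L_M}(Ta,Tb,Tc)=(\mu_{L_M}(a,b,c))^T,\qquad T\mu_{L_M}(Ta,Tb,\hat c)=(\mu_{L_M}(a,b,c))^v,$$
together with vanishing as soon as two or more core sections appear; the antisymmetry of $\mu_{L_M}$ makes the position of a single core section irrelevant up to the matching sign.

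Next I would compute the right-hand side on $a^T,a^v$, for which I need two ingredients. Proposition \ref{tangentintegrability} supplies the bracket identities $\Cour{a^T,b^T}_{TM}=\Cour{a,b}^T$, $\Cour{a^T,b^v}_{TM}=\Cour{a,b}^v$ and $\Cour{a^v,b^v}_{TM}=0$. In addition I would establish the companion pairing formulas $\langle a^T,b^T\rangle_{TM}=\langle a,b\rangle^T$, $\langle a^T,b^v\rangle_{TM}=\langle a,b\rangle^v$ and $\langle a^v,b^v\rangle_{TM}=0$, which follow at once from the defining relations of the vertical and tangent lifts of vector fields and $1$-forms recalled before \eqref{JMrelations}. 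Substituting into $\mu_{L_{TM}}(A_1,A_2,A_3)=\langle\Cour{A_1,A_2}_{TM},A_3\rangle_{TM}$ gives $\mu_{L_{TM}}(a^T,b^T,c^T)=\langle\Cour{a,b},c\rangle^T=(\mu_{L_M}(a,b,c))^T$, then $\mu_{L_{TM}}(a^T,b^T,c^v)=(\mu_{L_M}(a,b,c))^v$, and vanishing whenever two of the three arguments are vertical lifts. These agree with the left-hand side on every combination of generators, which proves the identity.

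The main obstacle is the structural first step rather than the case check: one must justify that $T\mu_{L_M}$ is genuinely $3$-linear and antisymmetric with respect to the prolongation bundle $TL_M\to TM$ — not merely a smooth function on the manifold $T(\bigoplus^3_M L_M)$ — so that agreement on generators forces agreement everywhere, and one must track the identification $J_M\oplus\Theta_M$ carefully enough that $Ta\mapsto a^T$, $\hat a\mapsto a^v$ line up the two computations argument by argument. Once the tangent-lift formulas for $T\mu_{L_M}$ and the lift formulas for the pairing are in hand, the remaining verification is routine.
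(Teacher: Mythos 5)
Your proof is correct, and it differs from the paper's in how it reduces the identity to a computation on sections. The paper verifies the formula only on triples of linear sections, showing $T\mu_{L_M}(Ta_1,Ta_2,Ta_3)=T(\mu_{L_M}(a_1,a_2,a_3))=\mu_{L_{TM}}(a_1^T,a_2^T,a_3^T)$, and then concludes by arguing that every point $\dot a\in TL_M$ over $\dot x\in TM$ is of the form $Ta(\dot x)$ for some section $a$ of $L_M$; strictly speaking that surjectivity claim fails over the zero section of $TM$ (where $Ta(0_{x})$ has vanishing core component), so the paper's argument implicitly needs a density-and-continuity remark to cover those points. You instead exploit the full module structure: you check the identity on all combinations of the generators $Ta,\hat a$ (equivalently $a^T,a^v$), using the mixed bracket and pairing formulas, and then invoke fibrewise $3$-linearity and antisymmetry of both sides over the prolongation bundle $TL_M\rmap TM$. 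This costs you the extra structural step you correctly flag --- that the tangent lift $T\mu_{L_M}$ of a tensor on $L_M\rmap M$ is again a tensor on $TL_M\rmap TM$, which is standard (Grabowski--Urba\'nski, Yano) and checkable in the coordinates you indicate --- but it buys a reduction that covers all of $TL_M$ uniformly, including the fibres over $0\in TM$, and it reuses the bracket identities of Proposition \ref{tangentintegrability} directly. Both routes share the same essential computation in the purely tangent-lift case; yours is the more systematic closure of the argument.
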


\begin{proof}
 
For every $a_1,a_2,a_3\in \Gamma_M(L_M)$ one has $T\mu_M(Ta_1,Ta_2,Ta_3)=T(\mu_M(a_1,a_2,a_3))$. On the other hand, the canonical map $J_M\oplus\Theta_M$ applied to each of the sections $Ta_1,Ta_2,Ta_3$ gives $a_1^T,a_2^T,a_3^T\in\Gamma_{TM}(L_{TM})$. Thus we conclude that
\begin{align*}
\mu_{TM}(a_1^T,a_2^T,a_3^T)=&\langle\Cour{a_1^T,a_2^T},a_3^T\rangle_{TM}\\
=&(\langle\Cour{a_1,a_2},a_3\rangle_M)^T, 
\end{align*}

\noindent which is exactly the tangent functor applied to the function $\mu_M(a_1,a_2,a_3)$. Therefore, for every triple of sections $a_1,a_2,a_3$ of $L_M$ we get
\begin{equation}\label{tangentonmu}
T\mu_{M}(Ta_1,Ta_2,Ta_3)=\mu_{TM}(a_1^T,a_2^T,a_3^T).
\end{equation}

 Now we notice, using local coordinates, that for every point $\dot{a}\in TL_M$ above $\dot{x}\in TM$ there exists a section $a\in\Gamma_M(L_M)$ such that $Ta(\dot{x})=\dot{a}$, where $Ta\in\Gamma_{TM}(TL_M)$ is the section obtained by applying the tangent functor to the section $a$ of $L_M$. This fact together with identity (\ref{tangentonmu}) prove the statement.
\end{proof}

As a consequence we obtain a direct proof of the Courant integrability of the tangent lift of a Dirac structure $L_M$ on $M$.

\begin{corollary}

Let $L_M$ be an almost Dirac structure on $M$, and consider the induced almost Dirac structure $L_{TM}$ on $TM$. Then $L_{TM}$ is Courant integrable if $L_M$ is Courant integrable. 

\end{corollary}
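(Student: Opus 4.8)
The plan is to reduce the statement to the vanishing of the Courant $3$-tensor and then read off the conclusion from Lemma~\ref{prop:tangentonmu}. Recall that a Lagrangian subbundle of $\mathbb{T}(TM)$ is Courant integrable precisely when its Courant $3$-tensor vanishes. Since Proposition~\ref{tangentisotropic} already guarantees that $L_{TM}$ is Lagrangian, it suffices to show that $\mu_{L_{TM}}=0$ under the hypothesis that $\mu_{L_M}=0$. So the whole argument becomes a computation of $\mu_{L_{TM}}$ in terms of $\mu_{L_M}$.

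First I would record that the hypothesis ``$L_M$ is Courant integrable'' means exactly that the tensor $\mu_{L_M}\in\Gamma(\wedge^3 L_M^*)$ is identically zero. Viewing $\mu_{L_M}$ as a fibrewise trilinear map and applying the tangent functor to the zero map yields $T\mu_{L_M}=0$ on $TL_M$. Then, for every triple $(\dot a_1,\dot a_2,\dot a_3)$ of elements of $TL_M$ lying over a common point $\dot x\in TM$, Lemma~\ref{prop:tangentonmu} gives
\begin{equation*}
\mu_{L_{TM}}\bigl((J_M\oplus\Theta_M)\dot a_1,(J_M\oplus\Theta_M)\dot a_2,(J_M\oplus\Theta_M)\dot a_3\bigr)=T\mu_{L_M}(\dot a_1,\dot a_2,\dot a_3)=0.
\end{equation*}
Because $L_{TM}:=(J_M\oplus\Theta_M)(TL_M)$ and $J_M\oplus\Theta_M\colon TL_M\rmap L_{TM}$ is a vector bundle isomorphism over $TM$, every triple of elements of a single fibre $(L_{TM})_{\dot x}$ arises as such an image, so $\mu_{L_{TM}}$ vanishes identically and $L_{TM}$ is Courant integrable.

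The only point I would treat with any care is the fibrewise surjectivity invoked at the end: one must note that over each $\dot x\in TM$ the vectors $(J_M\oplus\Theta_M)\dot a_i$ exhaust the fibre $(L_{TM})_{\dot x}$, so that vanishing on these images forces $\mu_{L_{TM}}$ to vanish everywhere. This is immediate from the very definition of $L_{TM}$ as the image of $TL_M$ under the fibrewise isomorphism $J_M\oplus\Theta_M$. Consequently I do not expect any genuine obstacle here; the substance of the result is already contained in Lemma~\ref{prop:tangentonmu}, and this corollary is essentially a specialization of that identity to the case where $\mu_{L_M}$ is the zero tensor.
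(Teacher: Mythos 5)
Your proof is correct and follows exactly the route the paper intends: the corollary is stated there as an immediate consequence of Lemma~\ref{prop:tangentonmu}, obtained by specializing the identity $T\mu_{L_M}=\mu_{L_{TM}}\circ(J_M\oplus\Theta_M)^{\times 3}$ to the case $\mu_{L_M}=0$ and using that $J_M\oplus\Theta_M\colon TL_M\rmap L_{TM}$ is a fibrewise isomorphism. Your explicit attention to the fibrewise surjectivity is a point the paper leaves implicit, but it is the same argument.
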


Consider now a multiplicative Dirac structure $L_G$ on $G$.  The application of the Lie functor to the groupoid morphism $\mu_{L_G}$ of Proposition \ref{multiplicativemu}, yields a Lie algebroid morphism 

$$A(\mu_{L_G}):\prod_{A(p_G\oplus c_G)}^{3}A(L_G)\rmap \R.$$

\noindent Since $A(\mu_{L_G})=T\mu_{L_G}|_{A(L_G)}$, we conclude the following.

\begin{proposition}\label{lieonmu}
 
Consider the Lagrangian subbundle $L_{AG}=(j^{-1}_G\oplus j'_G)A(L_G)\subseteq \mathbb{T}(AG)$. The following identity holds

$$A(\mu_{L_G})=\mu_{L_{AG}}\circ(j^{-1}_G\oplus j'_G)^{(3)},$$

\noindent where $(j^{-1}_G\oplus j'_G)^{(3)}:\prod_{A(p_G\oplus c_G)}^{3}A(L_G)\rmap \prod_{p_{AG}\oplus c_{AG}}^{3}L_{AG}$ denotes the natural extension of $(j^{-1}_G\oplus j'_G)$.

\end{proposition}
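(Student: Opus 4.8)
The plan is to deduce the identity from Lemma \ref{prop:tangentonmu} together with the relation $A(\mu_{L_G}) = T\mu_{L_G}|_{A(L_G)}$ recorded just above the statement. First I would apply Lemma \ref{prop:tangentonmu} to the manifold $M=G$ and the Lagrangian subbundle $L_G\subset \mathbb{T}G$: for every $(\dot a_1,\dot a_2,\dot a_3)\in TL_G$ one has
$$T\mu_{L_G}(\dot a_1,\dot a_2,\dot a_3)=\mu_{L_{TG}}\big((J_G\oplus\Theta_G)\dot a_1,(J_G\oplus\Theta_G)\dot a_2,(J_G\oplus\Theta_G)\dot a_3\big),$$
where $L_{TG}\subset\mathbb{T}(TG)$ is the tangent lift of $L_G$. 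By Proposition \ref{natural}, applied to the $\mathcal{LA}$-groupoid $(L_G,G,E,M)$, the bundle $A(L_G)$ embeds as a Lie subalgebroid of $TL_G$; restricting the previous identity to $A(L_G)$ and using $A(\mu_{L_G})=T\mu_{L_G}|_{A(L_G)}$ yields
$$A(\mu_{L_G})(w_1,w_2,w_3)=\mu_{L_{TG}}\big((J_G\oplus\Theta_G)w_1,(J_G\oplus\Theta_G)w_2,(J_G\oplus\Theta_G)w_3\big)$$
for all $w_1,w_2,w_3\in A(L_G)$. Everything then reduces to comparing the two ways of realizing $A(L_G)$ inside a Courant algebroid.

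The key step is to show that, for $w_i\in A(L_G)$,
$$\mu_{L_{TG}}\big((J_G\oplus\Theta_G)w_1,(J_G\oplus\Theta_G)w_2,(J_G\oplus\Theta_G)w_3\big)=\mu_{L_{AG}}\big((j^{-1}_G\oplus j'_G)w_1,(j^{-1}_G\oplus j'_G)w_2,(j^{-1}_G\oplus j'_G)w_3\big).$$
Here I would exploit that the two identifications are intertwined by the embedding $i_{AG}:AG\rmap TG$: by the diagram \eqref{ATG} the map $j_G$ is a restriction of the canonical involution $J_G$, and by the formula $j'_G=(Ti_{AG})^*\circ\Theta_G\circ i_{A(T^*G)}$ from \eqref{AT*G} the cotangent components are $i_{AG}$-related as well. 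Consequently $J_G\oplus\Theta_G$ carries $A(L_G)$ onto a subbundle of $L_{TG}$ that is $i_{AG}$-related to $L_{AG}=(j^{-1}_G\oplus j'_G)(A(L_G))\subseteq\mathbb{T}(AG)$.

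Since the Courant $3$-tensor \eqref{couranttensor} is built solely from the symmetric pairing and the Courant bracket, it suffices to check that both pieces of data agree on the two images. On one side, $J_G\oplus\Theta_G$ is a fiberwise isometry carrying the tangent bracket of $TL_G$ to the Courant bracket of $L_{TG}$ (Propositions \ref{tangentisotropic} and \ref{tangentintegrability}), and under the inclusion $A(L_G)\hookrightarrow TL_G$ of Proposition \ref{natural} the tangent pairing and bracket restrict to $A(\langle\cdot,\cdot\rangle_G)$ and to the prolonged bracket on $A(L_G)$. On the other side, $j^{-1}_G\oplus j'_G$ is a fiberwise isometry from $A(\langle\cdot,\cdot\rangle_G)$ to $\langle\cdot,\cdot\rangle_{AG}$ taking the prolonged bracket to the Courant bracket on $L_{AG}$ (the discussion preceding Theorem \ref{maintheorem1}). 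Hence both $\mu_{L_{TG}}\circ(J_G\oplus\Theta_G)^{(3)}$ and $\mu_{L_{AG}}\circ(j^{-1}_G\oplus j'_G)^{(3)}$ compute the same intrinsic tensor of $A(L_G)$ determined by $A(\langle\cdot,\cdot\rangle_G)$ and the prolonged bracket, so they coincide; combined with the first paragraph this gives the claim. I expect the main obstacle to be precisely this last comparison: one must verify carefully, using the tensoriality of $\mu$ and suitable extensions of sections of $A(L_G)$ to sections of $TL_G$, that the tangential restriction of the Courant bracket of $L_{TG}$ along $i_{AG}$ genuinely reproduces the Courant bracket of $L_{AG}$, so that no transverse contribution to $\mu_{L_{TG}}$ is lost in the restriction.
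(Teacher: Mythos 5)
Your proposal is correct and follows essentially the same route as the paper: the paper's proof is exactly the combination of Lemma \ref{prop:tangentonmu} (applied with $M=G$), the identity $A(\mu_{L_G})=T\mu_{L_G}|_{A(L_G)}$, and the fact that $j_G$ and $j'_G$ are suitable restrictions of $J_G$ and $\Theta_G$. You simply spell out in more detail the final comparison step that the paper leaves implicit.
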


\begin{proof}

This follows directly from Lemma \ref{prop:tangentonmu} and the fact that $j_G$ and $j'_G$ are suitable restrictions of $J_G$ and $\Theta_G$, respectively.

\end{proof}

%We can also consider the induced map $\theta^{\sharp}_G:L_G\times_G L_G\rmap L^*_G$ defined by 

%$$\theta^{\sharp}_G(e^1_g,e^2_g)(e^3_g):=\theta_G(e^1_g,e^2_g,e^3_g).$$

%\noindent Recall that since $L_G$ has a natural structure of $\mathcal{VB}$-groupoid, then we can consider the dual vector bundle $L^*_G\rmap G$ with the dual $\mathcal{VB}$-groupoid structure over $K^*_G$, where $K_G=\epsilon^*_M(\Ker(Ts\oplus \tilde{s}))$ is the core of the $\mathcal{VB}$-groupoid $L_G$. Since $L_G$ is Lagrangian with respect to the pairing $\langle\cdot,\cdot\rangle_G$, there is a canonical identification $L_G\cong L^*_G$. Indeed, the fact of $\langle\cdot,\cdot\rangle_G$ being a groupoid morphism implies that the identification $L_G\cong L^*_G$ is with respect to the $\mathcal{VB}$-category. This remark together with the fact that $\theta_G$ is a groupoid morphism lead to the following.

%\begin{proposition}
%The canonical map $\theta^{\sharp}_G:L_G\times_G L_G\rmap L^*_G$ is a $\mathcal{VB}$-groupoid morphism.

%\end{proposition}

%We can apply the Lie functor to the groupoid morphism $\theta^{\sharp}_G$ leading to a Lie algebroid morphism

%$$A(L_G)\times_{AG}A(L_G)\stackrel{A(\theta^{\sharp}_G)}{\longrightarrow}A(L^*_G).$$

%\noindent There is a canonical identification $A(L^*_G)\cong L^*_{AG}$. Since $A(\theta^{\sharp}_G)$ is a suitable restriction of $\theta_{TG}$, the following proposition follows directly.

%\begin{proposition}

%The identity below 
%$$A(\theta^{\sharp}_G)=\theta^{\sharp}_{AG}$$
%\noindent holds up to canonical identifications. 

%\end{proposition}

Now we are ready to state the main theorem of this work.

\begin{theorem}\label{maintheorem}

Let $G$ be a source simply connected Lie groupoid with Lie algebroid $AG$. There is a one-to-one correspondence between

\begin{enumerate}

\item multiplicative Dirac structures on $G$, and

\item morphic Dirac structures on $AG$.

\end{enumerate}

The correspondence is given by the map in Theorem \ref{maintheorem1}

\end{theorem}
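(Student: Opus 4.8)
The plan is to prove that the well-defined map of Theorem \ref{maintheorem1} is a bijection, using the source simply connectedness of $G$ to integrate the infinitesimal data, in the spirit of the integrations carried out in \cite{MX2} and \cite{BCWZ}. The starting observation is that the direct sum groupoid $\mathbb{T}G\rightrightarrows TM\oplus A^*G$ is itself source simply connected: its source fibers fiber as affine (resp.\ vector) bundles over the source fibers of $G$, which are simply connected by hypothesis, so the total source fibers are simply connected as well. Since the Lie algebroid of $\mathbb{T}G$ is canonically $\mathbb{T}(AG)$ via $j^{-1}_G\oplus j'_G$, we are in a position to apply Lie's second theorem for Lie subalgebroids.

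For injectivity, I would first note that a multiplicative Dirac structure $L_G$ is a $\mathcal{VB}$-subgroupoid of $\mathbb{T}G$, and hence source connected: its source fibers are vector bundles over the (connected) source fibers of $G$. A source connected Lie subgroupoid of $\mathbb{T}G$ is completely determined by its Lie algebroid $A(L_G)$, which in turn is carried isomorphically onto $L_{AG}$ by $j^{-1}_G\oplus j'_G$. Thus the assignment $L_G\mapsto L_{AG}$ is injective.

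For surjectivity, start with a morphic Dirac structure $L_{AG}\subseteq\mathbb{T}(AG)$ and transport it through the isomorphism $j^{-1}_G\oplus j'_G$ to a Lie subalgebroid $S:=(j_G\oplus (j'_G)^{-1})(L_{AG})\subseteq A(\mathbb{T}G)$. Because $\mathbb{T}G$ is source simply connected, $S$ integrates to a unique source connected immersed Lie subgroupoid $L_G\subseteq\mathbb{T}G$, and I would then verify the defining properties of a multiplicative Dirac structure in turn. That $L_G$ is a $\mathcal{VB}$-subgroupoid follows from uniqueness of the integration: since $L_{AG}$ is a \emph{linear} Dirac structure, $S$ is invariant under the fiberwise scalar multiplications of the vector bundle $A(\mathbb{T}G)\rmap AG$; applying the corresponding scalar multiplication $h_\lambda$ (a groupoid automorphism of $\mathbb{T}G$) to $L_G$ produces another source connected integration of $h_\lambda(S)=S$, which must coincide with $L_G$, so $L_G$ is scalar invariant and therefore a subbundle over a subbundle $E\subseteq TM\oplus A^*G$. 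That $L_G$ is Lagrangian is exactly Proposition \ref{multiplicativeisotropy} combined with the matching of ranks with the Lagrangian $L_{AG}$.

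The remaining and most delicate point is Courant integrability of $L_G$. By Proposition \ref{multiplicativemu} the Courant $3$-tensor $\mu_{L_G}\colon\prod_{p_G\oplus c_G}^{3}L_G\rmap\R$ is a groupoid morphism, and by Proposition \ref{lieonmu} its linearization is $A(\mu_{L_G})=\mu_{L_{AG}}\circ(j^{-1}_G\oplus j'_G)^{(3)}$. As $L_{AG}$ is a Dirac structure we have $\mu_{L_{AG}}=0$, hence $A(\mu_{L_G})=0$. Since $L_G$ is source connected, so is the triple fiber product $\prod_{p_G\oplus c_G}^{3}L_G$, and a groupoid morphism into the abelian group $\R$ out of a source connected groupoid is determined by its linearization; therefore $\mu_{L_G}=0$ and $L_G$ is Courant integrable. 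This produces a multiplicative Dirac structure whose image under the map of Theorem \ref{maintheorem1} is $L_{AG}$, establishing surjectivity and hence the bijection. I expect this last step to be the main obstacle: upgrading the vanishing of the infinitesimal tensor $A(\mu_{L_G})$ to the vanishing of $\mu_{L_G}$ itself, which is precisely where source simple connectedness is indispensable.
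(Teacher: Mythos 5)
Your proposal is correct and follows essentially the same route as the paper: integrate the morphic Dirac structure to a $\mathcal{VB}$-subgroupoid of the source simply connected groupoid $\mathbb{T}G$ (the paper cites the integration of $\mathcal{VB}$-subalgebroids where you sketch the scalar-invariance argument), deduce the Lagrangian property from Proposition \ref{multiplicativeisotropy}, and obtain Courant integrability by combining Proposition \ref{lieonmu} with Lie's second theorem to lift the vanishing of $A(\mu_{L_G})$ to that of $\mu_{L_G}$. Your explicit injectivity argument and the identification of the last step as the crux match the paper's treatment.
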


\begin{proof}

Let $L_G$ be a multiplicative Dirac structure on $G$. Consider the Lagrangian subbundle $L_{AG}:=(j^{-1}_G\oplus j'_G)(A(L_G))\subset \mathbb{T}AG$. Since $\mu_{L_G}\equiv 0$, then Proposition \ref{lieonmu} implies that $\mu_{L_{AG}}\equiv 0$. Thus, $L_{AG}$ is a Dirac structure on $AG$ which is clearly morphic. Notice that the integrability of $L_{AG}$ is also consequence of Theorem \ref{maintheorem1}. Conversely, consider an element $L_A\in\mathrm{Dir}_{morph}(AG)$, that is $L_A$ is a linear Dirac structure on $AG$ such that $L_A\subseteq \mathbb{T}AG$ is a $\mathcal{VB}$-subalgebroid. Notice that, since $G$ is source simply connected, then $\mathbb{T}G$ is the source simply connected Lie groupoid which integrates the Lie algebroid $\mathbb{T}AG$. As explained in \cite{BursztynCabreraHoyo}, the $\mathcal{VB}$-subalgebroid $L_A\subseteq \mathbb{T}A$ integrates to a source simply connected $\mathcal{VB}$-subgroupoid $L_G\subseteq \mathbb{T}G$. We will prove that $L_G$ is a multiplicative Dirac structure on $G$. Since $L_{AG}\subseteq\mathbb{T}AG$ is Lagrangian with respect to the canonical symmetric pairing $\langle\cdot,\cdot\rangle_{AG}$ on $\mathbb{T}AG$, we conclude from Proposition \ref{multiplicativeisotropy} that $L_G$ is Lagrangian with respect to the canonical symmetric pairing $\langle\cdot,\cdot\rangle_{G}$ on $\mathbb{T}G$. It remains to show that $L_G\subseteq \mathbb{T}G$ is integrable with respect to the Courant bracket. Equivalently, we have to prove that the Courant $3$-tensor $\mu_{L_G}\in\Gamma(\wedge^3L^*_G)$ is zero. Since $L_{A}\subseteq\mathbb{T}AG$ is a Dirac structure, the induced Courant $3$-tensor $\mu_{L_{A}}\in\Gamma(\wedge^3L^*_{A})$ vanishes. Therefore, combining Proposition \ref{lieonmu} (applied to the zero Lie algebroid morphism) with Lie's second theorem we conclude that $\mu_{L_G}\equiv 0$, as desired. This shows that $L_G$ is a Dirac structure on $G$, which by definition is multiplicative.

\end{proof}

\begin{remark}
Notice that, Theorem \ref{maintheorem} provides a direct proof of the integrability of the Lagrangian subbundle $L_{AG}\subset \mathbb{T}(AG)$ associated to a multiplicative Dirac structure $L_G\subset \mathbb{T}G$, without using the theory of $\mathcal{LA}$-groupoids. In spite of this, we believe that it is interesting by itself the fact that $L_{AG}$ inherits the structure of a double Lie algebroid, which relies on the observation that $L_G$ is an $\mathcal{LA}$-groupoid.
\end{remark}

%\begin{proposition}
 
%Let $\Phi:G\rmap H$ be a morphism of Lie groupoids. Assume that $L_G$ and $L_H$ are multiplicative Dirac structures on $G$ and $H$, respectively. If $\Phi$ is a backward Dirac map then $A(\Phi):(AG,L_{AG})\rmap(AH,L_{AH})$ is a backward Dirac map.

%\end{proposition}

%\begin{proof}
 
%This follows from the fact that $T\phi:(TG,L_{TG})\rmap (TH,L_{TH})$ is backward Dirac, and the fact that $A(\phi)$ is a suitable restriction of $T\phi$.

%\end{proof}

%%%%%%%%%%%%%%%%%%%%%%%%%%%%%%%%%%%%%%%%%%%%%%%%%%%%%%%%%%%%%%%%%%%%%%%%%%%%%%%%%%%%%%%%%%%%%%%%%%%%%%%%%%%%%%%

\subsection{Main examples revisited}

We have shown several examples of Dirac groupoids and Dirac algebroids. See sections \ref{DiracGroupoids}
and \ref{DiracAlgebroids}, respectively. Here we will see that both classes of examples are related by the construction explained in subsection \ref{Lieofmultiplicativedirac}.  Otherwise mentioned, throughout this subsection $G$ denotes a Lie groupoid over $M$ with Lie algebroid $AG$.

\subsubsection{\textbf{Poisson groupoids and Lie bialgebroids}}\label{poissongroupoidsrevisited}

Consider a multiplicative Poisson bivector $\pi_G$ on $G$. It is well known that in this case $M\subseteq G$ is a coisotropic submanifold and, in particular, the conormal bundle $N^*M\cong A^*G$ inherits a Lie algebroid structure.  The Dirac structure $L_G$ on $G$ defined by the graph of $\pi_G$ is a multiplicative Dirac structure. The multiplicativity of this Dirac structure is equivalent to $\pi^{\sharp}_G:T^*G\rmap TG$ being a morphism of Lie groupoids, and the associated Lie algebroid morphism coincides, up to identifications, with $\pi^{\sharp}_{AG}:T^*(AG)\rmap T(AG)$ where $\pi_{AG}$ denotes the linear Poisson bivector on $AG$ dual to the Lie algebroid $A^*G$. One concludes that the corresponding Dirac structure $L_{AG}$ on $AG$ is exactly the graph of $\pi_{AG}$.  Since $L_{AG}$ is a Lie subalgebroid of $\mathbb{T}AG$, the bundle map $\pi^{\sharp}_{AG}:T^*(AG)\rmap T(AG)$ is a Lie algebroid morphism. This is equivalent to saying that $(AG,A^*G)$ is a Lie bialgebroid. As a corollary of Theorem \ref{maintheorem} we obtain the following result.

\begin{corollary}\cite{MX2}

Let $G$ be a source simply connected Lie groupoid with Lie algebroid $AG$. There is a one-to-one correspondence between:

\begin{enumerate}

\item multiplicative Poisson bivectors on $G$, and

\item Lie bialgebroid structures on $(AG,A^*G).$

\end{enumerate}
 
\end{corollary}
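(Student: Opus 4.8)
The plan is to deduce the statement directly from Theorem \ref{maintheorem}, by identifying on each side the sub-class of Dirac structures given by graphs of bivectors. On the groupoid side, a multiplicative Poisson bivector $\pi_G$ corresponds to the multiplicative Dirac structure $L_G=\mathrm{Graph}(\pi_G)$, and multiplicativity of $\pi_G$ is equivalent to $\pi_G^{\sharp}:T^*G\rmap TG$ being a groupoid morphism, hence to multiplicativity of $L_G$ (see the Example on multiplicative Poisson bivectors). On the algebroid side, a Lie bialgebroid structure on $(AG,A^*G)$ is, by the Example on linear Poisson bivectors and by \cite{MX1}, exactly a morphic Dirac structure of the form $L_{AG}=\mathrm{Graph}(\pi_{AG})$, where $\pi_{AG}$ is the linear Poisson bivector on $AG$ dual to $A^*G$ and $\pi_{AG}^{\sharp}:T^*(AG)\rmap T(AG)$ is a Lie algebroid morphism. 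Thus it suffices to show that the bijection of Theorem \ref{maintheorem} restricts to a bijection between these two graph-type classes.

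The first step is to recall that, among Lagrangian subbundles, being the graph of a bivector is the transversality condition $L_G\cap TG=0$ inside $\mathbb{T}G$ (so that $L_G$ projects isomorphically onto the cotangent part), with the analogous statement $L_{AG}\cap T(AG)=0$ on $AG$. I would then treat the forward direction: starting from $L_G=\mathrm{Graph}(\pi_G)$, apply the Lie functor to the groupoid morphism $\pi_G^{\sharp}$ and transport the result through the canonical isomorphisms $j_G:T(AG)\rmap A(TG)$ and $j'_G:A(T^*G)\rmap T^*(AG)$ of \eqref{ATG} and \eqref{AT*G}. The key input is that the Lie functor sends the fibrewise-linear map $\pi_G^{\sharp}$ to the sharp map of the dual linear Poisson structure, i.e. $j_G^{-1}\oplus j'_G$ identifies $A(L_G)$ with $\mathrm{Graph}(\pi_{AG}^{\sharp})$; this is the duality between $AG$ and $A^*G$ underlying \cite{MX1,MX2}. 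Hence $L_{AG}=\mathrm{Graph}(\pi_{AG})$, and the resulting $(AG,A^*G)$ is a Lie bialgebroid by Theorem \ref{maintheorem1}.

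For the converse I would start from a Lie bialgebroid, so that $\pi_{AG}^{\sharp}:T^*(AG)\rmap T(AG)$ is a Lie algebroid morphism, and integrate it rather than argue abstractly. Since $G$ is source simply connected, so is the $\mathcal{VB}$-groupoid $T^*G\rightrightarrows A^*G$ (its source fibres fibre over the simply connected source fibres of $G$ with contractible vector-space fibres); via the identifications $A(T^*G)\cong T^*(AG)$ and $A(TG)\cong T(AG)$, Lie's second theorem integrates $\pi_{AG}^{\sharp}$ to a unique Lie groupoid morphism $\Psi:T^*G\rmap TG$, which is moreover a $\mathcal{VB}$-morphism. The graph of $\Psi$ is the source simply connected multiplicative Dirac structure $L_G$ produced by Theorem \ref{maintheorem}, and by Proposition \ref{multiplicativeisotropy} it is Lagrangian; a Lagrangian bundle map $T^*G\rmap TG$ is precisely $\pi_G^{\sharp}$ for a (skew) bivector $\pi_G$, which is multiplicative because $\Psi$ is a groupoid morphism and Poisson because $L_G$ is Courant-integrable.

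The step I expect to be the main obstacle is precisely this transversality preservation in the converse: showing that the multiplicative Dirac structure integrating $\mathrm{Graph}(\pi_{AG})$ is again the graph of a bundle map $T^*G\rmap TG$ rather than a more general Dirac structure. The cleanest way to settle it is the direct integration of $\pi_{AG}^{\sharp}$ just described; alternatively one can observe that $L_G\cap TG$ is a $\mathcal{VB}$-subgroupoid whose Lie algebroid is $L_{AG}\cap T(AG)=0$ (the intersection being clean because $L_{AG}$ is transverse to $T(AG)$), so that its core and side bundles vanish and hence $L_G\cap TG=0$ by the rank formula for $\mathcal{VB}$-groupoids. Either route yields the asserted bijection, recovering the integration of Lie bialgebroids to Poisson groupoids of \cite{MX2}.
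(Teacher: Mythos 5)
Your proposal is correct and follows the same overall strategy as the paper: both deduce the corollary by specializing Theorem \ref{maintheorem} to graph-type Dirac structures, using that multiplicativity of $\pi_G$ is equivalent to $\pi_G^{\sharp}$ being a groupoid morphism and that $(j_G^{-1}\oplus j'_G)(A(\mathrm{Graph}(\pi_G)))=\mathrm{Graph}(\pi_{AG})$ with $\pi_{AG}$ the linear Poisson bivector dual to $A^*G$, so that the morphic condition becomes the Lie bialgebroid condition of \cite{MX1}. The one place you go beyond the paper is the converse direction: the paper simply invokes Theorem \ref{maintheorem} and leaves implicit the fact that the multiplicative Dirac structure integrating a graph-type morphic Dirac structure is again the graph of a bivector. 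You identify this transversality preservation as the main obstacle and settle it, either by integrating $\pi_{AG}^{\sharp}$ directly via Lie's second theorem applied to the source simply connected groupoid $T^*G\rightrightarrows A^*G$ (essentially the original route of \cite{MX2}), or by the rank argument showing $L_G\cap TG$ is a $\mathcal{VB}$-subgroupoid with vanishing core and side bundles. The first route is clean and complete; the second requires the additional observation that $L_G\cap TG$ has constant rank before the rank formula applies, which you partially acknowledge via the cleanness of the intersection. Either way, your write-up supplies a step that the paper's derivation genuinely needs but does not spell out.
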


\subsubsection{\textbf{Multiplicative $2$-forms and IM-$2$-forms}}

Assume that $\omega_G\in\Omega^2(G)$ is a multiplicative closed $2$-form on $G$. The Dirac structure $L_G$ given by the graph of $\omega^{\sharp}_G:TG\rmap T^*G$ is multiplicative. In this case, the corresponding Dirac structure $L_{AG}$ on $AG$ is given by the graph of the closed $2$-form $\omega_{AG}:=-\sigma^*\omega_{can}$ where $\sigma:AG\rmap T^*M$ is defined by $\sigma(u)=i_u\omega_G|_{TM}$. Since the Dirac structure $L_{AG}$ is a Lie sualgebroid of $\mathbb{T}(AG)$, we conclude that the bundle map $\omega^{\sharp}_{AG}:T(AG)\rmap T^*(AG)$ is a Lie algebroid morphism. As shown in \cite{BCO}, this is equivalent to the bundle map $\sigma:AG\rmap T^*M$ being an \textbf{IM-$2$-form} on $AG$, that is, for every $u,v\in \Gamma(AG)$, the following conditions hold

\begin{itemize}
\item $\langle \sigma(u), \rho_{AG}(v)\rangle =- \langle \sigma(v),\rho_{AG}(u)\rangle;$

\item $\sigma[u,v]=\Lie_{\rho_{AG}(u)}\sigma(v)-\Lie_{\rho_{AG}(v)}\sigma(u) + d\langle \sigma(u),\rho_{AG}(v)\rangle.$

\end{itemize}

As a corollary of Theorem \ref{maintheorem}, we get the following result.

\begin{corollary}\cite{BCWZ}

Let $G$ be a source simply connected Lie groupoid with Lie algebroid $AG$. There is a one-to-one correspondence between:

\begin{enumerate}

\item multiplicative closed $2$-forms on $G$, and

\item IM-$2$-forms on $AG$.

\end{enumerate}

\end{corollary}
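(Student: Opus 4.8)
The plan is to obtain this statement as a specialization of Theorem \ref{maintheorem} to Dirac structures of ``graph of a $2$-form'' type. The discussion immediately preceding the corollary already does most of the work: it shows that a multiplicative closed $2$-form $\omega_G$ determines a multiplicative Dirac structure $L_G=\mathrm{Graph}(\omega^{\sharp}_G)$, that the associated morphic Dirac structure $L_{AG}$ on $AG$ is the graph of the linear closed $2$-form $\omega_{AG}=-\sigma^*\omega_{can}$ with $\sigma(u)=i_u\omega_G|_{TM}$, and that morphicity of $L_{AG}$ is equivalent to $\sigma$ being an IM-$2$-form by \cite{BCO}. Composing these with Theorem \ref{maintheorem} yields a well-defined assignment $\omega_G\mapsto\sigma$, and injectivity is immediate: distinct $\omega_G$ give distinct graphs $L_G$, hence distinct $L_{AG}$ by Theorem \ref{maintheorem}, hence distinct $\sigma$. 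Thus the only point that genuinely remains is \emph{surjectivity}, namely that every IM-$2$-form on $AG$ arises this way; equivalently, that the bijection of Theorem \ref{maintheorem} restricts to a bijection between the respective graph-of-$2$-form subclasses.

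For this I would phrase the ``graph of a $2$-form'' condition intrinsically: a Lagrangian subbundle $L\subseteq\mathbb{T}X$ is the graph of a unique $2$-form precisely when the anchor projection $L\rmap TX$ is a fiberwise isomorphism. The key observation is that the isomorphism $j^{-1}_G\oplus j'_G\colon A(TG)\oplus A(T^*G)\rmap T(AG)\oplus T^*(AG)$ respects the tangent/cotangent splitting, since $j^{-1}_G$ acts on the first summand and $j'_G$ on the second. Consequently $A(L_G)$ projects isomorphically onto $A(TG)$ if and only if $L_{AG}=(j^{-1}_G\oplus j'_G)(A(L_G))$ projects isomorphically onto $T(AG)$, i.e. $A(L_G)$ is of graph type exactly when $L_{AG}$ is. For the forward direction this is immediate: if $L_G=\mathrm{Graph}(\omega^{\sharp}_G)$, the groupoid projection $L_G\rmap TG$ is an isomorphism, so by functoriality of the Lie functor $A(L_G)\rmap A(TG)$ is an isomorphism and $L_{AG}$ is of graph type, reproducing the explicit computation of $\omega_{AG}$ recalled above.

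It remains to upgrade this infinitesimal statement to a groupoid statement, and this is where the main work lies. Starting from an IM-$2$-form $\sigma$, Theorem \ref{maintheorem} produces a multiplicative Dirac structure $L_G\subseteq\mathbb{T}G$ integrating the graph-type morphic structure $L_A=\mathrm{Graph}(\omega^{\sharp}_{AG})$, and by the previous paragraph the induced Lie algebroid morphism $A(L_G)\rmap A(TG)$ is an isomorphism. I would then argue that the groupoid projection $L_G\rmap TG$, which is a restriction of a structure map of $\mathbb{T}G$ and hence a groupoid morphism, is itself an isomorphism. The hard part is precisely this integration step: since $G$ is source simply connected, so is $L_G$ by construction, and so is the tangent groupoid $TG$, whose source fibers are affine bundles over the simply connected source fibers of $G$ and hence simply connected. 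Lie's second theorem then identifies $L_G\rmap TG$ as the unique integration of the infinitesimal isomorphism $A(L_G)\rmap A(TG)$, forcing it to be an isomorphism of source simply connected groupoids. Therefore $L_G=\mathrm{Graph}(\omega^{\sharp}_G)$ for a unique $2$-form $\omega_G$, which is multiplicative because $L_G$ is a subgroupoid and closed because $L_G$ is a Dirac structure; since $L_G$ is the preimage of $L_A$ under the bijection, its associated IM-$2$-form is exactly $\sigma$, establishing surjectivity and completing the correspondence.
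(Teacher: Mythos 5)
Your proposal is correct, and its skeleton is the same as the paper's: specialize Theorem \ref{maintheorem} to graph-type structures, using the computation that the morphic Dirac structure associated to $\mathrm{Graph}(\omega^{\sharp}_G)$ is $\mathrm{Graph}(\omega^{\sharp}_{AG})$ with $\omega_{AG}=-\sigma^*\omega_{can}$, $\sigma(u)=i_u\omega_G|_{TM}$, together with the equivalence of \cite{BCO} between morphicity of this graph and $\sigma$ being an IM-$2$-form. Where you genuinely go beyond the paper is on surjectivity: the paper records only the forward computation and then declares the corollary a consequence of Theorem \ref{maintheorem}, implicitly taking for granted that the inverse of the bijection carries graph-type morphic Dirac structures back to graphs of multiplicative $2$-forms. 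You prove this point. Your two ingredients are exactly what is needed: first, that $j^{-1}_G\oplus j'_G$ acts summandwise, so $A(L_G)\rmap A(TG)$ is a fiberwise isomorphism precisely when $L_{AG}\rmap T(AG)$ is; second, a Lie II argument showing that the groupoid morphism $L_G\rmap TG$ (the restriction of the projection $\mathbb{T}G\rmap TG$) integrates this infinitesimal isomorphism and is therefore itself an isomorphism, using that $L_G$ is source simply connected by the \cite{BursztynCabreraHoyo} integration step inside the proof of Theorem \ref{maintheorem} and that $TG$ is source simply connected because its source fibers are affine bundles over the $1$-connected source fibers of $G$. The isotropy of $L_G$ then forces the resulting bundle map $TG\rmap T^*G$ to be skew, i.e.\ $\omega^{\sharp}_G$ for a $2$-form, with closedness from Courant involutivity and multiplicativity from the subgroupoid property, and injectivity of $\sigma\mapsto -\sigma^*\omega_{can}$ identifies the associated IM-$2$-form with the one you started from. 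The trade-off: the paper's derivation is shorter but leaves the reader to supply (or to import from \cite{BCWZ}) precisely this preservation-of-graph-type statement, while your version makes the corollary self-contained at the modest cost of one extra application of Lie's second theorem.
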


\subsubsection{\textbf{Foliated groupoids and Foliated algebroids}}

Let $F_G\subseteq TG$ be a multiplicative involutive subbundle. Then, the Dirac structure $L_{G}=F_G\oplus F^{\circ}_G$ is multiplicative. The corresponding Dirac structure $L_{AG}$ on $AG$ associated to $L_G$ is given by $L_{AG}=F_{AG}\oplus F^{\circ}_{AG}\subset \mathbb{T}(AG)$, where $F_{AG}:=j^{-1}_G(A(F_G))\subseteq T(AG)$. Since $L_{AG}$ is a Dirac structure which is also a Lie subalgebroid of $\mathbb{T}(AG)$, we conclude that $F_{AG}\subseteq T(AG)$ is an involutive subbundle which is also a Lie subalgebroid of $T(AG)\rmap TM$. We refer to  such subbundle as a \textbf{morphic foliation} on $AG$.  As a corollary of Theorem \ref{maintheorem}, we obtain the next result.

\begin{corollary}\cite{Eli}

Let $G$ be a source simply connected Lie groupoid with Lie algebroid $AG$. There exists a one-to-one correspondence between:

\begin{enumerate}

\item multiplicative foliations on $G$, and

\item morphic foliations on $AG.$

\end{enumerate}

\end{corollary}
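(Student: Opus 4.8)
The plan is to deduce the corollary directly from Theorem \ref{maintheorem}, by showing that the bijection $L_G\mapsto L_{AG}$ restricts to a bijection between the Dirac structures of \emph{foliation type}, i.e.\ those of the form $F\oplus F^{\circ}$. The forward assignment $F_G\mapsto F_{AG}:=j^{-1}_G(A(F_G))$ is already carried out in the discussion preceding the statement, so the essential point is to characterise foliation-type Dirac structures intrinsically and to show that this characterisation is preserved by the correspondence. First I would introduce the involution $\iota:\mathbb{T}G\rmap \mathbb{T}G$, $(X,\alpha)\mapsto(X,-\alpha)$, and observe that a Lagrangian subbundle $L$ satisfies $\iota(L)=L$ if and only if $L=F\oplus F^{\circ}$ with $F=L\cap TG$: invariance forces $L$ to split as $(L\cap TG)\oplus(L\cap T^{*}G)$, and maximal isotropy identifies the second summand with $(L\cap TG)^{\circ}$. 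The identical characterisation holds on $\mathbb{T}(AG)$ with the analogous involution, also denoted $\iota$. Since the intro records that $F\oplus F^{\circ}$ is Courant integrable exactly when $F$ is involutive, the $\iota$-invariant elements of $\mathrm{Dir}_{mult}(G)$ are precisely the multiplicative foliations $F_G$ (with $F_G=L_G\cap TG$ a $\mathcal{VB}$-subgroupoid of the tangent groupoid), and the $\iota$-invariant elements of $\mathrm{Dir}_{morph}(AG)$ are precisely the morphic foliations $F_{AG}$.

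Next I would verify that $\iota$ is a Lie groupoid automorphism of $\mathbb{T}G$ and a Lie algebroid automorphism of $\mathbb{T}(AG)$. On $\mathbb{T}G$ this follows from linearity of the cotangent multiplication: from $(\alpha_g\circ\beta_h)(X_g\bullet Y_h)=\alpha_g(X_g)+\beta_h(Y_h)$ one gets $(-\alpha_g)\circ(-\beta_h)=-(\alpha_g\circ\beta_h)$, so $\iota$ respects the groupoid product, and being fibrewise linear it is also compatible with the source and target maps $\tilde{s},\tilde{t}$. In fact $\iota$ is nothing but the $\mathcal{VB}$-groupoid scalar multiplication by $-1$ on the $T^{*}G$-summand, which is automatically multiplicative; the same remark applies verbatim on the algebroid side for $\mathbb{T}(AG)$.

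The key step is then to prove that the correspondence of Theorem \ref{maintheorem} is $\iota$-equivariant, i.e.\ $(\iota(L_G))_{AG}=\iota(L_{AG})$. Since $L_G\mapsto L_{AG}$ is the composite of the Lie functor with the identification $j^{-1}_G\oplus j'_G$, it suffices to show that $A(\iota)=\mathrm{id}_{A(TG)}\oplus A(-\mathrm{id}_{T^{*}G})$ corresponds to $\iota=\mathrm{id}_{T(AG)}\oplus(-\mathrm{id}_{T^{*}(AG)})$ under $j^{-1}_G\oplus j'_G$. The $TG$-component is immediate. For the cotangent component, applying the Lie functor to the fibrewise negation $-\mathrm{id}_{T^{*}G}$ yields the fibrewise negation of $A(T^{*}G)$ over $AG$, because the Lie functor preserves the $\mathcal{VB}$-structure and hence scalar multiplication; as $j'_G:A(T^{*}G)\rmap T^{*}(AG)$ is a vector bundle isomorphism over $AG$, it intertwines this with $-\mathrm{id}_{T^{*}(AG)}$. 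This gives the desired equivariance, and restricting the bijection of Theorem \ref{maintheorem} to the fixed-point sets of $\iota$ produces the claimed one-to-one correspondence, realised explicitly by $F_{AG}=j^{-1}_G(A(F_G))$ and its inverse $F_G=L_G\cap TG$.

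The main obstacle will be the compatibility computation $A(-\mathrm{id}_{T^{*}G})\leftrightarrow-\mathrm{id}_{T^{*}(AG)}$ through $j'_G$: one must check that the negation inherited along the source fibres of the cotangent groupoid agrees, under the Legendre-type identification $j'_G=j^{*}_G\circ K_G$, with the cotangent negation on $T^{*}(AG)$. This is the only point where the precise $\mathcal{VB}$-algebroid structure of $A(T^{*}G)$ and the definition of $j'_G$ via the Tulczyjew map $\Theta_G$ must be used rather than purely formal manipulation; every other assertion is a restriction of a statement already established in the excerpt.
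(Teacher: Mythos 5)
Your proposal is correct, and it organizes the deduction differently from the paper. The paper's route is a direct computation: starting from a multiplicative foliation $F_G$, it applies the Lie functor to $L_G=F_G\oplus F_G^{\circ}$ and identifies the image explicitly as $L_{AG}=F_{AG}\oplus F_{AG}^{\circ}$ with $F_{AG}=j^{-1}_G(A(F_G))$, then invokes Theorem \ref{maintheorem}; the converse direction (that a morphic foliation integrates to a multiplicative one, not merely to some multiplicative Dirac structure) is left implicit. Your involution trick makes precisely this point airtight: characterizing foliation-type Lagrangian subbundles as the fixed points of $\iota:(X,\alpha)\mapsto(X,-\alpha)$, checking that $\iota$ is a $\mathcal{VB}$-groupoid (resp.\ $\mathcal{VB}$-algebroid) automorphism preserving the Courant bracket and the isotropy condition, and proving $\iota$-equivariance of the bijection forces $\iota(L_G)=L_G$ by injectivity, so the integrated Dirac structure really splits as $F_G\oplus F_G^{\circ}$. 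Two small remarks. First, in the fixed-point characterization you should note that $F=L\cap TG$ is automatically a subbundle: the pointwise splitting gives $\dim(L_x\cap T_xG)+\dim(L_x\cap T^*_xG)=\dim G$, and since both summands have upper semicontinuous rank, each rank is locally constant. Second, the step you flag as the main obstacle is easier than you fear: the vector bundle structure on $A(T^*G)\rmap AG$ is obtained by applying the Lie functor to the structure maps of $T^*G\rmap G$, so $A(-\mathrm{id}_{T^*G})$ is literally fiberwise negation for that structure, and since $j'_G$ is a double vector bundle isomorphism covering $\mathrm{id}_{AG}$, it is fiberwise linear over $AG$ and hence intertwines the two negations formally — no appeal to the fine structure of the Tulczyjew map is needed. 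With these observations your argument is complete and, in exchange for slightly more setup, yields both directions of the correspondence with equal rigor, whereas the paper's computation buys explicitness of the map $F_G\mapsto F_{AG}$.
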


As shown in \cite{Eli, JotzOrtiz}, having a morphic foliation on $AG$ is equivalent to $AG$ be equipped with an \textbf{IM-foliation}, that is, a triple $(F_M,K,\nabla)$ where $F_M\subseteq TM$ is an involutive subbundle, $K\subseteq AG$ is a Lie subalgebroid with $\rho_{AG}(K)\subseteq F_M$, and $\nabla$ is an $F_M$-connection on $AG/K$ satisfying the following conditions

\begin{itemize}

\item $\nabla$ is flat;

\item if $u\in \Gamma(AG)$ satisfies $\nabla_{\Gamma(F_M)}(u+K)\in \Gamma(K)$, then $[u,\Gamma(K)]\subseteq \Gamma(K);$

\item if $u,v\in\Gamma(AG)$ are such that $\nabla_{\Gamma(F_M)(u+K)}, \nabla_{\Gamma(F_M)(v+K)}\in \Gamma(K)$, then $\nabla_{\Gamma(F_M)([u,v]+K)};$

\item if $u\in \Gamma(AG)$ satisfies $\nabla_{\Gamma(F_M)}(u+K)\in \Gamma(K)$, then $[\rho_{AG}(u),\Gamma(F_M)]\subseteq \Gamma(F_M).$

\end{itemize}

The properties as above determine completely the morphic foliation $F_{AG}$ on $AG$. In particular, Dirac structures of the form $L_{AG}=F_{AG}\oplus F^{\circ}_{AG}$ are in one-to-one correspondence with IM-foliations. Additionally, there exists a conceptually clear interpretation of IM-foliations in terms of Representations up to homotopy. This interpretation makes part of work in progress.

\subsubsection{\textbf{Dirac Lie groups and Dirac Lie algebras}}

Let $G$ be a Lie \emph{group} with Lie algebra $\mathfrak{g}$ and let $L_G\in\mathrm{Dir}_{mult}(G)$ be a multiplicatice Dirac structure. Consider the Dirac structure $L_{\mathfrak{g}}$ on $\mathfrak{g}$ associated to $L_G$. It was shown in \cite{Ortiz1} that $\Ker(L_G):=L_G\cap TG$ is a regular involutive subbundle of $TG$, in particular $\Ker(L_{\mathfrak{g}})=j^{-1}_G(A(\Ker(L_G)))$ is an involutive subbundle of $T\mathfrak{g}$. Since $\Ker(L_{\mathfrak{g}})$ is a linear foliation on $\mathfrak{g}$, i.e. multiplicative with respect to the abelian group structure on $\mathfrak{g}$, then the leaf through $0\in \mathfrak{g}$ is a vector subspace $\mathfrak{h}\subseteq \mathfrak{g}$. The other leaves are affine subspaces of $\mathfrak{g}$ modeled on $\mathfrak{h}$. In particular, the space of characteristic leaves of $L_{\mathfrak{g}}$ coincides with the quotient space $\mathfrak{g}/\mathfrak{h}$. The fact that $L_{g}\subseteq \mathbb{T}\mathfrak{g}$ is a Lie subalgebroid implies that $\mathfrak{h}\subseteq \mathfrak{g}$ is an ideal. Therefore, the space of characteristic leaves $\mathfrak{g}/\mathfrak{h}$ of $L_{\mathfrak{g}}$ inherits a unique Lie algebra structure making the quotient map $\phi:\mathfrak{g}\rmap \mathfrak{g}/\mathfrak{h}$ into a surjective Lie algebra morphism. Since $\mathfrak{g}/\mathfrak{h}$ is the space of characteristic leaves of $L_{\mathfrak{g}}$, there is a unique Poisson structure $\pi$ on $\mathfrak{g}/\mathfrak{h}$ making the quotient map $\phi:\mathfrak{g}\rmap \mathfrak{g}/\mathfrak{h}$ into a forward and backward Dirac map. Since $L_{\mathfrak{g}}$ is a morphic Dirac structure, we conclude that $\pi$ is a morphic bivector on $\mathfrak{g}/\mathfrak{h}$. In particular, the pair $(\mathfrak{g}/\mathfrak{h},(\mathfrak{g}/\mathfrak{h})^*)$ is a Lie bialgebra. Conversely, given a Lie algebra $\mathfrak{g}$ and an ideal $\mathfrak{h}\subseteq \mathfrak{g}$ such that $(\mathfrak{g}/\mathfrak{h},(\mathfrak{g}/\mathfrak{h})^*)$ is a Lie bialgebra, then the linear Poisson bivector $\pi$ on $\mathfrak{g}/\mathfrak{h}$ is morphic. The surjective Lie algebra morphism $\mathfrak{g}\rmap \mathfrak{g}/\mathfrak{h}$ induces a Dirac structure $L_{\mathfrak{g}}$ on $\mathfrak{g}$ (the pull back of $\pi$) which is morphic as well. We have proved the following result.

\begin{proposition}

Let $\mathfrak{g}$ be a finite dimensional Lie algebra. There is a one-to-one correspondence between:

\begin{enumerate}

\item morphic Dirac structures on $\mathfrak{g}$, and

\item ideals $\mathfrak{h}\subseteq \mathfrak{g}$ such that $(\mathfrak{g}/\mathfrak{h},(\mathfrak{g}/\mathfrak{h})^*)$ is a Lie bialgebra.

\end{enumerate}

\end{proposition}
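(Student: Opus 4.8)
The plan is to establish both directions of the correspondence by analysing a morphic Dirac structure through its kernel foliation and the Poisson structure induced on the associated leaf space, in direct analogy with the description of Dirac Lie groups in \cite{Ortiz1}.

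For the forward direction I would start from a morphic Dirac structure $L_{\mathfrak{g}}$ and examine $\Ker(L_{\mathfrak{g}})=L_{\mathfrak{g}}\cap T\mathfrak{g}$. Linearity of $L_{\mathfrak{g}}$ forces this kernel to be a linear involutive subbundle of $T\mathfrak{g}$, that is, a foliation that is multiplicative for the abelian group $(\mathfrak{g},+)$. Consequently the leaf through $0$ is a vector subspace $\mathfrak{h}\subseteq\mathfrak{g}$, every other leaf is an affine translate of $\mathfrak{h}$, and the characteristic leaf space is precisely $\mathfrak{g}/\mathfrak{h}$. The first substantive step is then to show that the Lie subalgebroid condition $L_{\mathfrak{g}}\subseteq\mathbb{T}\mathfrak{g}$ forces $\mathfrak{h}$ to be an ideal, so that $\mathfrak{g}/\mathfrak{h}$ is a Lie algebra and the projection $\phi:\mathfrak{g}\rmap\mathfrak{g}/\mathfrak{h}$ is a surjective Lie algebra morphism.

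The second substantive step is reduction: since $\mathfrak{g}/\mathfrak{h}$ is the leaf space of $L_{\mathfrak{g}}$, there is a unique Poisson bivector $\pi$ on $\mathfrak{g}/\mathfrak{h}$ for which $\phi$ is simultaneously a forward and backward Dirac map. The key point is that the morphic property descends, so $\pi$ is itself a morphic (linear) Poisson structure; by the characterisation of morphic Poisson bivectors as Lie bialgebroids this is exactly the assertion that $(\mathfrak{g}/\mathfrak{h},(\mathfrak{g}/\mathfrak{h})^*)$ is a Lie bialgebra. For the converse, given an ideal $\mathfrak{h}$ with $(\mathfrak{g}/\mathfrak{h},(\mathfrak{g}/\mathfrak{h})^*)$ a Lie bialgebra, the dual linear Poisson bivector $\pi$ on $\mathfrak{g}/\mathfrak{h}$ is morphic; pulling it back as a Dirac structure along $\phi$ produces $L_{\mathfrak{g}}$, and one checks directly that this pullback is linear and a Lie subalgebroid of $\mathbb{T}\mathfrak{g}$. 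The two assignments are mutually inverse because the kernel foliation of the pullback recovers $\mathfrak{h}$ and the reduced bivector recovers $\pi$.

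The main obstacle I expect lies in the two places where the morphic (Lie subalgebroid) hypothesis must be converted into algebraic data: that $\mathfrak{h}$ is an ideal, and that the reduced bivector is morphic. Both reduce to showing that the Lie subalgebroid structure of $L_{\mathfrak{g}}$ is compatible with reduction by its kernel foliation, which is most cleanly obtained either by transporting the corresponding Dirac Lie group statements of \cite{Ortiz1} to the infinitesimal level or by a direct computation with the explicit bracket on $\Gamma(\mathbb{T}\mathfrak{g})$.
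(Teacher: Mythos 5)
Your proposal follows essentially the same route as the paper's own argument: extract the kernel foliation $\Ker(L_{\mathfrak{g}})=L_{\mathfrak{g}}\cap T\mathfrak{g}$, identify its leaf through $0$ as a subspace $\mathfrak{h}$ that the Lie subalgebroid condition forces to be an ideal, reduce to the leaf space $\mathfrak{g}/\mathfrak{h}$ to obtain a morphic linear Poisson bivector (equivalently a Lie bialgebra structure on $(\mathfrak{g}/\mathfrak{h},(\mathfrak{g}/\mathfrak{h})^*)$), and invert the construction by pulling back along $\phi$. The only cosmetic difference is that the paper imports the regularity and involutivity of the kernel from the group-level results of \cite{Ortiz1} rather than deriving them purely infinitesimally, a point you already flag as one of your two options.
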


The proposition above recovers the results of \cite{Ortiz1}.

\subsubsection{\textbf{Tangent lifts of Dirac structures}}

Let $L_G$ be a multiplicative Dirac structure on $G$. Consider the associated morphic Dirac structure $L_{AG}$ on the Lie algebroid of $G$. We can lift $L_G$ to a multiplicative Dirac structure on the tangent groupoid $TG$. Similarly, as explained in subsection \ref{tangentliftmorphic}, the morphic Dirac structure $L_{AG}$ can be lifted to a morphic Dirac structure $L_{T(AG)}$ on the tangent Lie algebroid $T(AG)\rmap TM$. It is straightforward to check that the morphic Dirac structure on $T(AG)$ associated to $L_{TG}$ as in Theorem \ref{maintheorem} coincides with the tangent lift $L_{T(AG)}$ of $L_{AG}$. That is, the tangent functor commutes with the Lie functor.

\subsubsection{\textbf{Symmetries of Dirac groupoids}}

Let $L_G$ be a multiplicative Dirac structure on $G$. Consider the associated morphic Dirac structure $L_{AG}$ on $AG$ as in Theorem \ref{maintheorem}. Let $H$ be a Lie group acting freely and properly on $G$ by groupoid automorphisms $\Phi_h:G\rmap G$, $h\in H$. Applying the Lie functor to each $\Phi_h:G\rmap G$ yields a free and proper $H$-action on $AG$ by Lie algebroid automorphisms $A(\Phi_h):AG\rmap AH$, $h\in H$. Assume that the $H$-orbits of $G$ concide with the characteristic leaves of $L_G$. Then, the $H$-orbits of $AG$ coincide with the characteristic leaves of $L_{AG}$. We have shown that in this situation we can endow the space of characteristic leaves $G/H$ of $L_G$ with a unique multiplicative Poisson bivector $\pi_{G/H}$ making the quotient map $G\rmap G/H$ into a forward and backward Dirac map. Similarly, the space of characteristic leaves $AG/H$ of $L_{AG}$ inherits a unique morphic Poisson structure $\pi_{AG/H}$ making the quotient map $AG\rmap AG/H$ into a forward and backward Dirac map.  One can easily see that the morphic Dirac structure $L_{AG/H}$ associated to $\pi_{G/H}$ as in \ref{poissongroupoidsrevisited} coincides with the morphic Dirac structure on $AG/H$ given by the graph of $\pi_{AG/H}$. As a consequence, the Lie bialgebroid of $(G/H,\pi_{G/H})$ is exactly $(AG/H,(AG/H)^*)$.

\subsubsection{\textbf{B-field transformations}}

Let $L_G$ be a multiplicative Dirac structure on $G$. Assume that $B_G$ is a multiplicative closed $2$-form on $G$. Consider the Dirac structure $L^B_G$ on $G$, obtained out of $L_G$ by applying the $B$-field transformation with respect to $B_G$. As observed in \cite{BCO}, every multiplicative closed $2$-form on $G$ induces a morphic closed $2$-form $B_{AG}$ on $AG$. A direct computation shows that the morphic Dirac structure $L^B_{AG}$ corresponding to $L^B_G$ (as in Theorem \ref{maintheorem}) is given by the $B$-field transformation of $L_{AG}$ with respect to $B_{AG}$, in agreement with \cite{Ortiz2}. 

\subsubsection{\textbf{Generalized complex groupoids}}

Let $L_G\subseteq \mathbb{T}_{\C}G$ be a multiplicative generalized complex structure on $G$. Notice that the construction explained in Theorem \ref{maintheorem1} applies also to the case of multiplicative generalized complex structures. As a result, there is a morphic Dirac structure $L_{AG}\subseteq \mathbb{T}_{\C}AG$ given by $L_{AG}:=(j^{-1}_G\oplus j'_G)_{\C}(A(L_G))$, where $(j^{-1}_G\oplus j'_G)_{\C}:A(\mathbb{T}_{\C}G)\rmap \mathbb{T}_{\C}(AG)$ denotes the complexification of the canonical isomorphism $(j^{-1}_G\oplus j'_G):A(\mathbb{T}G)\rmap \mathbb{T}(AG)$. Observe that $L_{AG}\subseteq \mathbb{T}_{\C}AG$ is in fact a generalized complex structure making the pair $(AG,L_{AG})$ into a generalized Lie algebroid. For that, we only need to check that $L_{AG}\cap \overline{L_{AG}}=\{0\}$. Indeed, one easily checks that the conjugation map $\overline{(\cdot)}_G:\mathbb{T}_{\C}G\rmap \mathbb{T}_{\C}G$ is a Lie groupoid isomorphism. Therefore, the generalized complex structure $\overline{L}_G$ on $G$ is also multiplicative. Since $\mathbb{T}_{\C}G=L_G\oplus \overline{L}_G$, the application of the Lie functor yields a decomposition

\begin{equation}\label{decompositionGCA}
A(\mathbb{T}_{\C}G)=A(L_G)\oplus A(\overline{L}_G).
\end{equation}

A straightforward computation shows that the Lie algebroid isomorphism $A(\overline{(\cdot)}_G):A(\mathbb{T}_{\C}G)\rmap A(\mathbb{T}_{\C}G)$ satisfies 

$$(j^{-1}_G\oplus j'_G)_{\C}\circ A(\overline{(\cdot)}_G)=\overline{(\cdot)}_{AG},$$

\noindent where the map of the right hand side of the identity above is the conjugation map $\mathbb{T}_{\C}(AG)\rmap \mathbb{T}_{\C}(AG)$. Hence, applying the canonical isomorphism $(j^{-1}_G\oplus j'_G)_{\C}:A(\mathbb{T}_{\C}G)\rmap \mathbb{T}_{\C}(AG)$ on both sides of \eqref{decompositionGCA}, gives rise to

$$\mathbb{T}_{\C}AG=L_{AG}\oplus \overline{L}_{AG}.$$

Therefore, $L_{AG}$ is transversal to $\overline{L}_{AG}$ and we conclude that $L_{AG}$ is a morphic generalized complex structure. In this situation, Theorem \ref{maintheorem} gives rise to the following result.

\begin{proposition}\cite{JotzStienonXu}

Let $G$ be a source simply connected Lie groupoid with Lie algebroid $AG$. There is a one-to-one correspondence between:

\begin{enumerate}

\item multiplicative generalized complex structures on $G$, and

\item morphic generalized complex structures on $AG$. 

\end{enumerate}

\end{proposition}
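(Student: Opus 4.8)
The plan is to obtain this correspondence as a refinement of the complexified version of Theorem \ref{maintheorem}, the only extra ingredient being the transversality condition $L\cap\overline{L}=\{0\}$ that singles out a generalized complex structure among all complex Dirac structures. First I would upgrade Theorem \ref{maintheorem} to the complexified setting: replacing $\mathbb{T}G$ by $\mathbb{T}_\C G$ and $\mathbb{T}(AG)$ by $\mathbb{T}_\C(AG)$ throughout, the same argument (Propositions \ref{multiplicativeisotropy} and \ref{lieonmu}, together with the integration result of \cite{BursztynCabreraHoyo} and Lie's second theorem) yields a bijection between complex multiplicative Dirac structures on $G$ and complex morphic Dirac structures on $AG$, implemented by $L_G\mapsto L_{AG}=(j^{-1}_G\oplus j'_G)_\C(A(L_G))$. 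All the constructions involved are $\C$-linear, so nothing new is required here beyond observing that the canonical isometry, the Courant $3$-tensor argument and the integration step all complexify verbatim.

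The heart of the matter is then to check that this bijection carries transversality on one side to transversality on the other. For this I would first establish the intertwining relation between the correspondence and complex conjugation, namely
$$(j^{-1}_G\oplus j'_G)_\C\circ A(\overline{(\cdot)}_G)=\overline{(\cdot)}_{AG}\circ(j^{-1}_G\oplus j'_G)_\C,$$
where $\overline{(\cdot)}_G:\mathbb{T}_\C G\rmap\mathbb{T}_\C G$ and $\overline{(\cdot)}_{AG}:\mathbb{T}_\C(AG)\rmap\mathbb{T}_\C(AG)$ are the fibrewise conjugations. Since $\overline{(\cdot)}_G$ is a Lie groupoid automorphism, $\overline{L}_G$ is again a multiplicative Dirac structure, and the relation above shows $\overline{L}_{AG}=(j^{-1}_G\oplus j'_G)_\C(A(\overline{L}_G))$. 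Granting this, the generalized complex condition $\mathbb{T}_\C G=L_G\oplus\overline{L}_G$ of complementary $\mathcal{VB}$-subgroupoids passes through the Lie functor to a decomposition $A(\mathbb{T}_\C G)=A(L_G)\oplus A(\overline{L}_G)$, and then through the isomorphism $(j^{-1}_G\oplus j'_G)_\C$ to $\mathbb{T}_\C(AG)=L_{AG}\oplus\overline{L}_{AG}$, which is exactly $L_{AG}\cap\overline{L}_{AG}=\{0\}$. Running the same argument in reverse, using the complexified Theorem \ref{maintheorem} to integrate a morphic generalized complex $L_A$ on $AG$ to a source simply connected complex multiplicative Dirac structure $L_G$, gives the converse implication, so $L_G$ is generalized complex if and only if $L_{AG}$ is.

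The step I expect to be the main obstacle is justifying the additivity of the Lie functor across the decomposition $\mathbb{T}_\C G=L_G\oplus\overline{L}_G$: one must argue that applying $A(\cdot)$ to two complementary $\mathcal{VB}$-subgroupoids whose sum is the ambient groupoid produces complementary $\mathcal{VB}$-subalgebroids whose sum is the ambient algebroid. This is where the compatible rank count $\dim A(L_G)+\dim A(\overline{L}_G)=\dim A(\mathbb{T}_\C G)$, together with $A(L_G)\cap A(\overline{L}_G)=A(L_G\cap\overline{L}_G)=\{0\}$, has to be made precise at the level of kernels of the linearized source maps. Once this exactness of the Lie functor on $\mathcal{VB}$-objects is in place, the intertwining relation reduces the whole statement to the complexified Theorem \ref{maintheorem}, and the proof is complete.
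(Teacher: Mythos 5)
Your proposal is correct and follows essentially the same route as the paper: complexify Theorem \ref{maintheorem}, show that the correspondence intertwines the conjugation maps on $\mathbb{T}_{\C}G$ and $\mathbb{T}_{\C}(AG)$, and push the decomposition $\mathbb{T}_{\C}G=L_G\oplus\overline{L}_G$ through the Lie functor and the isomorphism $(j^{-1}_G\oplus j'_G)_{\C}$ to obtain $\mathbb{T}_{\C}(AG)=L_{AG}\oplus\overline{L}_{AG}$. The only difference is one of care rather than substance: you flag the additivity of the Lie functor on complementary $\mathcal{VB}$-subgroupoids as a point needing justification and state the intertwining relation with the conjugation on both sides, whereas the paper asserts both more briefly.
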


%%%%%%%%%%%%%%%%%%%%%%%%%%%%%%%%%%%%%%%%%%%%%%%%%%%%%%%%%%%%%%%%%%%%%%%%%%%%%%%%%%%%%%%%%%%%%%%%%%%%%%%%%%%%%%%%%%%%%%%%%%%%%%%%%%%%%%%%%%%%%%

\section{Conclusions and final remarks}

This work can be considered as the first step to describe multiplicative Dirac structures infinitesimally. We have seen that every multiplicative Dirac structure $L_G$ on a Lie groupoid $G$ induces a Dirac structure $L_{AG}$ on its Lie algebroid $AG$ which is compatible with the algebroid structure in the sense that $L_{AG}\subseteq \mathbb{T}(AG)$ is a Lie subalgebroid. Notice that in the special situation of Poisson groupoids (resp. multiplicative closed $2$-forms, multiplicative foliations) the induced Dirac structure on $AG$ is equivalent to endow $(AG,A^*G)$ with a Lie bialgebroid structure (resp. IM-$2$-form, IM-foliation). Therefore, it would be interesting to introduce a suitable notion of \textbf{IM-Dirac structure}, providing a more explicit description of Dirac structures compatible with a Lie algebroid, unifying different infinitesimal structures such as Lie bialgebroids, IM-$2$-forms and IM-foliations. This study will be part of a future work.

\end{document}